

%
%
%

\documentclass[12pt,reqno,a4paper]{amsart}            
\newcommand{\finalVersion}{}

\usepackage{tikz}
\usetikzlibrary{matrix, arrows, backgrounds}
\usetikzlibrary{cd}

\usepackage{hyperref}

\usepackage{scrlayer-scrpage}    
\pagestyle{scrheadings}
\usepackage{scrtime}     

\usepackage{amsmath, amssymb}
\usepackage{amsthm}

\usepackage{accents}     
\usepackage{mathtools}  

\usepackage{bbm}         

\usepackage{mathrsfs}    

\usepackage{graphicx}                
\usepackage{iftex}

\voffset+5mm
\usepackage[totalwidth=475pt, totalheight=735pt]{geometry}


\pagestyle{scrheadings}
\usepackage{scrtime}     

\usepackage{ifthen}       

\ifthenelse{\isundefined \finalVersion}
{ 
  \rehead{\texttt{\jobname.tex}, \today, \thistime}
  \lohead{\texttt{\jobname.tex}, \today, \thistime}


  \newcommand{\marker}[1]{\fbox{\rule{0pt}{0.1ex}\textbf{#1}}}
  \newcommand{\markerO}{\marker{Olaf}}
  \newcommand{\markerS}{\marker{Sebastian}}

  \newcounter{lookcounter}
  \setcounter{lookcounter}{0}
  \newcommand{\look}[2][$\diamond$]
             {
               \stepcounter{lookcounter}
               \marker{#1}
               \footnote[\arabic{lookcounter}]{\marker{$\diamond$} #2}
             }
  \newcommand{\lookO}[1]
             {
               \stepcounter{lookcounter}
               \markerO
               \footnote[\arabic{lookcounter}]{\markerO #1}
             }
  \newcommand{\lookS}[1]
             {
               \stepcounter{lookcounter}
               \markerS
               \footnote[\arabic{lookcounter}]{\markerS #1}
             }

  \ifthenelse{\isundefined \OlafsVersion}
  {\usepackage[notref,notcite]{showkeys}}              
  {\usepackage{/home/post/Aktuell/LaTeX/my-showkeys}}  
}
{ 
  \lohead{\textsf{\shorttitle}}
  \rehead{\textsf{\authors}}
  \automark[subsection]{section}
  \automark[subsection]{section}

  \newcommand{\look}[1]{}%
  \newcommand{\lookO}[1]{}%
  \newcommand{\lookS}[1]{}%
}

\newcommand{\comment}[1]{}   

\usepackage{graphicx}                


\usepackage{amsmath, amssymb}
\usepackage{amsthm}

\usepackage{amscd}


\usepackage{accents}     
\usepackage{mathtools}  

\usepackage{bbm}         

\usepackage{mathrsfs}    

\usepackage{xspace}  

\numberwithin{equation}{section}

 %

\newcounter{myenumi}

\newcommand{\itemref}[1]{\noindent\eqref{#1}}

\newcommand{\myfont}{\sffamily}

\newcommand{\myparagraph}[1]{\noindent\textbf{\myfont{#1}}}

\newtheoremstyle{mythmstyle}
  {\topsep}
  {\topsep}
  {\itshape}
  {}
  {\bfseries \myfont}
  {.}
  {.5em}
  {}

\newtheoremstyle{mydefstyle}
  {\topsep}
  {\topsep}
  {\normalfont}
  {}
  {\bfseries \myfont}
  {.}
  {.5em}
  {}

\theoremstyle{mythmstyle}       

\newcounter{intro}

\newtheorem{maintheorem}[intro]{Theorem}
\newtheorem{maincorollary}[intro]{Corollary}

\swapnumbers                    

\newtheorem{theorem}{Theorem}[section]
\newtheorem{proposition}[theorem]{Proposition}
\newtheorem{lemma}[theorem]{Lemma}
\newtheorem{corollary}[theorem]{Corollary}

\theoremstyle{mydefstyle}        
\newtheorem{definition}[theorem]{Definition}

\newtheorem{example}[theorem]{Example}

\newtheorem{remark}[theorem]{Remark}
\newtheorem{remarks}[theorem]{Remarks}
\newtheorem*{remark*}{Remark}


\expandafter\let\expandafter\oldproof\csname\string\proof\endcsname
\let\oldendproof\endproof
\renewenvironment{proof}[1][\bfseries\myfont\proofname]{%
  \oldproof[\bfseries \myfont #1]%
}{\oldendproof}


\makeatletter
\renewcommand\section{\@startsection{section}{1}%
  \z@{.7\linespacing\@plus\linespacing}{.5\linespacing}%
  {\Large\myfont\bfseries}}
\renewcommand\subsection{\@startsection{subsection}{2}%
  \z@{-.5\linespacing\@plus-.7\linespacing}{.5\linespacing}%
  {\large\myfont\bfseries}}
\renewcommand\subsubsection{\@startsection{subsubsection}{3}%
  \z@{.5\linespacing\@plus.7\linespacing}{-.5em}%
  {\myfont\bfseries}}
\renewenvironment{abstract}{%
  \ifx\maketitle\relax
    \ClassWarning{\@classname}{Abstract should precede
      \protect\maketitle\space in AMS document classes; reported}%
  \fi
  \global\setbox\abstractbox=\vtop \bgroup
    \normalfont\Small
    \list{}{\labelwidth\z@
      \leftmargin3pc \rightmargin\leftmargin
      \listparindent\normalparindent \itemindent\z@
      \parsep\z@ \@plus\p@
      
    }%
    \item[\hskip\labelsep
      \myfont\bfseries
    \abstractname.]%
}{%
  \endlist\egroup
  \ifx\@setabstract\relax \@setabstracta \fi
}
\renewcommand\contentsnamefont{\myfont\bfseries}
\renewcommand\@starttoc[2]{\begingroup
  \setTrue{#1}%
  \par\removelastskip\vskip\z@skip
  \@startsection{}\@M\z@{\linespacing\@plus\linespacing}%
    {.5\linespacing}{
      \contentsnamefont}{#2}%
  \ifx\contentsname#2%
  \else \addcontentsline{toc}{section}{#2}\fi
  \makeatletter
  \@input{\jobname.#1}%
  \if@filesw
    \@xp\newwrite\csname tf@#1\endcsname
    \immediate\@xp\openout\csname tf@#1\endcsname \jobname.#1\relax
  \fi
  \global\@nobreakfalse \endgroup
  \addvspace{32\p@\@plus14\p@}%
  \let\tableofcontents\relax
}
\renewcommand\@settitle{\begin{center}%
  \baselineskip14\p@\relax
    \LARGE
    \bfseries
    \myfont
  \@title
  \end{center}%
}
\renewcommand\@setauthors{%
  \begingroup
  \def\thanks{\protect\thanks@warning}%
  \trivlist
  \centering\footnotesize \@topsep30\p@\relax
  \advance\@topsep by -\baselineskip
  \item\relax
  \author@andify\authors
  \def\\{\protect\linebreak}%
  \large
  \myfont\bfseries\authors
  \ifx\@empty\contribs
  \else
    ,\penalty-3 \space \@setcontribs
    \@closetoccontribs
  \fi
  \endtrivlist
  \normalfont\myfont\@setaddresses
  \endgroup
}
\renewcommand\@setaddresses{\par
  \nobreak \begingroup
\footnotesize
  \def\author##1{\nobreak\addvspace\bigskipamount}%
  \def\\{\unskip, \ignorespaces}%
  \interlinepenalty\@M
  \def\address##1##2{\begingroup
    \par\addvspace\bigskipamount\indent
    \@ifnotempty{##1}{(\ignorespaces##1\unskip) }%
    {
      \ignorespaces##2}\par\endgroup}%
  \def\curraddr##1##2{\begingroup
    \@ifnotempty{##2}{\nobreak\indent\curraddrname
      \@ifnotempty{##1}{, \ignorespaces##1\unskip}\/:\space
      ##2\par}\endgroup}%
  \def\email##1##2{\begingroup
    \@ifnotempty{##2}{\nobreak\indent\emailaddrname
      \@ifnotempty{##1}{, \ignorespaces##1\unskip}\/:\space
      \ttfamily##2\par}\endgroup}%
  \def\urladdr##1##2{\begingroup
    \def~{\char`\~}%
    \@ifnotempty{##2}{\nobreak\indent\urladdrname
      \@ifnotempty{##1}{, \ignorespaces##1\unskip}\/:\space
      \ttfamily##2\par}\endgroup}%
  \addresses
  \endgroup
}
\renewcommand\enddoc@text{\ifx\@empty\@translators \else\@settranslators\fi
}
\renewcommand\@secnumfont{\myfont\bfseries} 

\renewcommand\maketitle{\par
  \@topnum\z@ 
  \@setcopyright
  \thispagestyle{firstpage}
  \ifx\@empty\shortauthors \let\shortauthors\shorttitle
  \else \andify\shortauthors
  \fi
  \@maketitle@hook
  \begingroup
  \@maketitle
  \toks@\@xp{\shortauthors}\@temptokena\@xp{\shorttitle}%
  \toks4{\def\\{ \ignorespaces}}
  \edef\@tempa{%
    \@nx\markboth{\the\toks4
      \@nx\MakeUppercase{\the\toks@}}{\the\@temptokena}}%
  \@tempa
  \endgroup
  \c@footnote\z@
  \@cleartopmattertags
}
\makeatother

%
\newcommand{\Sec}[1]{Section~\ref{sec:#1}}

\newcommand{\Subsec}[1]{Subsection~\ref{ssec:#1}}




\newcommand{\Thm}[1]{Theorem~\ref{thm:#1}}

\newcommand{\ThmS}[2]{Theorems~\ref{thm:#1}--\ref{thm:#2}}

\newcommand{\Ex}[1]{Example~\ref{ex:#1}}

\newcommand{\ExS}[2]{Examples~\ref{ex:#1}--\ref{ex:#2}}

\newcommand{\Lem}[1]{Lemma~\ref{lem:#1}}

\newcommand{\Lemenum}[2]{Lemma~\ref{lem:#1}~(\ref{#2})}

\newcommand{\Cor}[1]{Corollary~\ref{cor:#1}}

\newcommand{\Prp}[1]{Proposition~\ref{prp:#1}}
\newcommand{\Prps}[2]{Propositions~\ref{prp:#1} and~\ref{prp:#2}}

\newcommand{\Prpenum}[2]{Proposition~\ref{prp:#1}~(\ref{#2})}

\newcommand{\Rem}[1]{Remark~\ref{rem:#1}}

\newcommand{\Def}[1]{Definition~\ref{def:#1}}
\newcommand{\Defs}[2]{Definitions~\ref{def:#1} and~\ref{def:#2}}

\newcommand{\Defenum}[2]{Definition~\ref{def:#1}~(\ref{#2})}

%
%

\newcommand{\abs}[2][{}]{\lvert{#2}\rvert_{{#1}}}    
\newcommand{\abssqr}[2][{}]{\lvert{#2}\rvert^2_{#1}} 
\newcommand{\bigabs}[2][{}]{\bigl\lvert{#2}\bigr\rvert_{#1}}     

\newcommand{\normsymb}{\|}
\newcommand{\bignormsymb}[1]{#1\|}

\newcommand{\norm}[2][{}]{\normsymb{#2}\normsymb_{{#1}}}    
\newcommand{\normsqr}[2][{}]{\normsymb{#2}\normsymb^2_{#1}} 
\newcommand{\bignorm}[2][{}]{\bignormsymb{\bigl}{#2}\bignormsymb{\bigr}_{#1}}
\newcommand{\Bignorm}[2][{}]{\bignormsymb{\Bigl}{#2}\bignormsymb{\Bigr}_{#1}}


\newcommand{\iprod}[3][{}]{\langle{#2},{#3}\rangle_{#1}}  

\newcommand{\set}[2]{\{ \, #1 \, | \, #2 \, \} }      
\newcommand{\bigset}[2]{\bigl\{ \, #1 \, \bigl|\bigr. \, #2 \, \bigr\} }
\newcommand{\Bigset}[2]{\Bigl\{ \, #1 \, \Bigl|\Bigr. \, #2 \, \Bigr\} }

\DeclareMathOperator*{\bigdcup}{\mathaccent\cdot{\bigcup}}
\DeclareMathOperator*{\dcup}   {\mathaccent\cdot\cup}

\newcommand{\map}[3]{ #1 \colon #2 \longrightarrow #3}    


\newcommand{\restr}[1]{{\restriction}_{#1}} 


\newcommand{\card}[1]{\lvert#1\rvert}   

\newcommand{\dd}    {\, \mathrm d}    

\DeclareMathOperator{\dom}    {dom}
\DeclareMathOperator{\ran}    {ran}
\DeclareMathOperator{\id}     {id}   


\DeclareMathOperator{\supp}   {supp}



\newcommand{\specsymb} {\sigma} 

\newcommand{\spec}[2][{}]   {\specsymb_{\mathrm{#1}}(#2)}

\newcommand{\essspec}[1]{\spec[ess] {#1}}
\newcommand{\disspec}[1]{\spec[disc]{#1}}


\newcommand{\eps}{\varepsilon} 
\renewcommand{\phi}{\varphi}   
\renewcommand{\rho}{\varrho}   



\newcommand{\Q}{\mathbb{Q}} 
\newcommand{\R}{\mathbb{R}} 
\newcommand{\C}{\mathbb{C}} 
\newcommand{\N}{\mathbb{N}} 

\newcommand{\1}{\mathbbm 1}                    



\newcommand{\wt}{\widetilde}           
\newcommand {\qf}[1]{\mathfrak{#1}}    

\newcommand{\HS}{\mathscr H}           
\newcommand{\HSaux}{\mathscr G}        

\newcommand{\Lsymb}    {\mathsf L}     




\newcommand{\Lpspace}[1][p]    {\Lsymb_{#1}}     
\newcommand{\Lsqrspace}    {\Lpspace[2]}     


\newcommand{\Unitary}[1]{\mathcal U({#1})}



\newcommand{\Lsqr}[2][{}]{\Lsqrspace^{#1}({#2})} 
 %




\newcommand{\Err}{\mathrm O}


\newcommand{\quadtext}[1]{\quad\text{#1}\quad}
\newcommand{\qquadtext}[1]{\qquad\text{#1}\qquad}

%
%

\newcommand{\compl}[1]{{#1}^{\mathrm c}}

\providecommand{\myfont}{}
\providecommand{\look}[1]{}
\providecommand{\lookO}[1]{}
\providecommand{\lookS}[1]{}

%
%

\newcommand{\LevyProkhorov}{L\'evy-Prokhorov\xspace}
\newcommand{\NbarO}{\bar \N_0}
\newcommand{\Proj}[1]{\ProjOpsymb(#1)}
\newcommand{\Borel}{\mathcal B}   
\newcommand{\Top}{\mathcal O}     
\newcommand{\OpInt}{\ring{\mathcal I}}     
\newcommand{\Comp}{\mathcal K}     
\newcommand{\Fin}{\mathcal F}     

\newcommand{\openBall}{\ring B}
\newcommand{\closedBall}{B}
\renewcommand{\implies}{\Rightarrow}
\renewcommand{\iff}{\Leftrightarrow}

\newcommand{\D}{A}        
\newcommand{\HSgen}{\HS}  
\newcommand{\Nbar}{\overline \N} 

\newcommand{\gnrc}{\overset{\mathrm{gnrc}}%
    {\underset{\mathrm{QUE}}{\longrightarrow}}}

\newcommand{\gnrcW}{\overset{\mathrm{gnrc}}%
  {\underset{\mathrm{Weid}}{\longrightarrow}}}

\newcommand{\gnrcUni}{\overset{\mathrm{gnrc}}%
  {\underset{\mathrm{uni}}{\longrightarrow}}}

\providecommand{\BdOpsymb}{\mathsf{Lin}}
\newcommand{\UnitOpsymb}{\mathsf{Uni}}
\newcommand{\ProjOpsymb}{\mathsf{Proj}}
\newcommand{\IsoOpsymb}{\mathsf{Iso}}

\newcommand{\Lin}[2][{}]{\BdOpsymb_{#1}(#2)}
\renewcommand{\Unitary}[1]{\UnitOpsymb(#1)}
\DeclareMathOperator{\IsoX}{\IsoOpsymb}
\newcommand{\Iso}[2][{}]{\IsoX_{{#1}}(#2)}

\newcommand{\UnitOrbit}[1]{\UnitOpsymb_*(#1)}

\newcommand{\setOp}{\BdOpsymb} 



\newcommand{\dIso}{\operatorname d_{\text{\normalfont iso}}} 
\newcommand{\bardIso}{\bar {\operatorname d}_{\text{\normalfont iso}}} 


\newcommand{\dUni}{\operatorname d_{\text{\normalfont uni}}}   
\newcommand{\bardUni}{\bar {\operatorname d}_{\text{\normalfont uni}}}   

\newcommand{\dCmf}{\operatorname d_{\text{\normalfont cmf}}}   
\newcommand{\dCmffin}{\delta_{\text{\normalfont fin}}}  
\newcommand{\dCmffiN}{\acute \delta_{\text{\normalfont fin}}}  

\newcommand{\ess}{\text{\normalfont ess}}
\newcommand{\disc}{\text{\normalfont disc}}
\newcommand{\dDisc}{\operatorname d_\disc}   

\newcommand{\dHaus}{\operatorname d_{\text{\normalfont Hausd}}}   
\newcommand{\dHauS}{\acute {\operatorname d}_{\text{\normalfont Hausd}}}   

\newcommand{\dSpec}{\operatorname d_{\text{\normalfont spec}}}   

\newcommand{\dGH}{\operatorname d_{\text{\normalfont GH}}}   
\newcommand{\dQUE}{\operatorname d_{\text{\normalfont que}}} 
\newcommand{\bardQUE}{\bar {\operatorname d}_{\text{\normalfont que}}} 
\newcommand{\dQUEvar}{\wt {\operatorname d}_{\text{\normalfont que}}} 



\DeclareMathOperator{\Specsymb}{\sigma}

\DeclareMathOperator{\MYsupp}{supp}  
\newcommand{\NEWsupp}[2][{}]   {\MYsupp_{#1}#2}
\renewcommand{\supp}{\NEWsupp}
\newcommand{\esssupp}{\supp[\ess]}
\newcommand{\dissupp}{\supp[\disc]}

\DeclareMathOperator{\graph}{\mathsf{gr}}

\DeclareMathOperator{\dist}{dist}
\renewcommand{\specsymb}{\Specsymb}

\renewcommand{\card}[1]{\operatorname \# #1} %

%
%


\DeclareMathOperator{\rank}{rank}
%
\DeclareSymbolFont{bbold}{U}{bbold}{m}{n}
\DeclareSymbolFontAlphabet{\mathbbold}{bbold}
\DeclareMathSymbol{\bblambda}{\mathord}{bbold}{"15}
%
\ifTUTeX
  \usepackage{fontspec}
\else
  \usepackage[T1]{fontenc}
  \usepackage[utf8]{inputenc} 
  \DeclareUnicodeCharacter{200B}{{\hskip 0pt}}
\fi

%


\begin{document}

\title[]%
{Distances between operators acting on different Hilbert spaces}

\author{Olaf Post}
\address{Fachbereich 4 -- Mathematik,
  Universit\"at Trier,
  54286 Trier, Germany}
\email{olaf.post@uni-trier.de}

\author{Sebastian Zimmer}%
\address{Fachbereich 4 -- Mathematik,
  Universit\"at Trier,
  54286 Trier, Germany}
\email{zimmerse@uni-trier.de}

\ifthenelse{\isundefined \finalVersion} %
{\date{\today, \thistime,  \emph{File:} \texttt{\jobname.tex}}}
{\date{\today}}  

\begin{abstract}
  The aim of this article is to define and compare several distances
  (or metrics) between operators acting on different (separable)
  Hilbert spaces.  We consider here three main cases of how to measure
  the distance between two bounded operators: first by taking the
  distance between their unitary orbits, second by isometric
  embeddings (this generalises a concept of Weidmann) and third by
  quasi-unitary equivalence (using a concept of the first author of
  the present article).

  Our main result is that the unitary and isometric distances are
  equal provided the operators are both self-adjoint and have $0$ in
  their essential spectra.  Moreover, the quasi-unitary distance is
  equivalent (up to a universal constant) with the isometric distance
  for any pair of bounded operators.  The unitary distance gives an
  upper bound on the Hausdorff distance of their spectrum.  If both
  operators have purely essential spectrum, then the unitary distance
  equals the Hausdorff distance of their spectra.  Using a finer
  spectral distance respecting multiplicity of discrete eigenvalues,
  this spectral distance equals the unitary distance also for
  operators with essential and discrete spectrum.  In particular, all
  operator distances mentioned above are equal to this spectral
  distance resp.\ controlled by it in the quasi-unitary case for
  self-adjoint operators with $0$ in the essential spectrum.  We also
  show that our results are sharp by presenting various
  (counter-)examples.  Finally, we discuss related convergence
  concepts complementing results from our first
  article~\cite{post-zimmer:22}.
\end{abstract}


\maketitle




%
\section{Introduction}
\label{sec:intro}
%

\subsection{Distances for operators acting in different spaces}
\label{ssec:dist.op}
Two bounded operators acting in a common complex Hilbert space can be
compared by taking the operator norm of their difference.  It is then
known that this norm is an upper bound on the Hausdorff distance of
their spectra (see for example~\eqref{eq:herbst-nakamura} taken
from~\cite[App.~A]{herbst-nakamura:99}).  In particular, norm
convergence of operators implies convergence of their spectra with the
same convergence speed or possibly faster.

In applications, it often happens that the underlying spaces also vary
with the convergence parameter, e.g.\ when considering
\begin{itemize}
\item domain perturbations (see
  e.g.~\cite{daners:08,arrieta-lamberti:17,anne-post:21} and
  references therein);
\item limits of spaces where the dimension shrinks, such as operators
  acting on thin neighbourhoods of embedded metric graphs, so-called
  \emph{graph-like manifolds} or \emph{thick graphs}, see
  e.g.~\cite{post:12,post-simmer:20b} and references therein);

\item discrete Laplacians as approximations of Laplacians on metric
  spaces such as the Laplacian on the Sierpi\'nski gasket or other
  post-critically finite fractals (see e.g.~\cite{post-simmer:18}
  or~\cite[Sec.~2]{post-simmer:19} and references therein.  In this
  case, typically, one space is finite dimensional while the other is
  not.
\end{itemize}
The aim of the present paper is to define different distance and
convergence notions for such problems.  We use the notion ``distance''
here, as $\dQUE$ defined below is not a metric in the proper sense (we
show that the triangle inequality for $\dQUE$ is fulfilled only up to
a constant, see \Lem{dque.metric}).

For $n=1,2$, let $\HS_n$ be a separable Hilbert space.  The Banach
space of bounded linear operators from $\HS_1$ into $\HS_2$ is denoted
by $\Lin{\HS_1,\HS_2}$.  Moreover, we set
$\Lin{\HS_n}:=\Lin{\HS_n,\HS_n}$.  In the sequel, we consider two
operators $R_n \in \Lin {\HS_n}$.

\subsubsection*{Unitary distance}
A natural approach to compare operators acting in varying Hilbert
spaces is to use \emph{unitary equivalence}.  For
$R_n \in \Lin{\HS_n}$ ($n=1,2$) we define their \emph{unitary
  distance} as
\begin{equation}
  \label{eq:uni-dist}
  \dUni(R_1,R_2)
  := \inf \bigset{\norm[\Lin {\HS_2}]{U R_1U^*-R_2}}
  {U \in \Unitary{\HS_1,\HS_2}},
\end{equation}
where $\Unitary{\HS_1,\HS_2}$ is the set of all unitary operators
$\map{U}{\HS_1}{\HS_2}$ and
$\norm[\Lin
{\HS_2}]{R_2}=\sup\set{\norm[\HS_2]{R_2u_2}}{\norm[\HS_2]{u_2}\le 1}$
is the operator norm.  In this approach one necessarily needs that
$\HS_1$ and $\HS_2$ have the same dimension (i.e.\ the cardinality of
an orthonormal Hilbert space basis), otherwise we set
$\dUni(R_1,R_2)=\infty$.  In particular, this distance is not
applicable to the case of finite-dimensional approximations of
self-adjoint operators on infinite-dimensional Hilbert spaces.

\subsubsection*{Crude multiplicity functions}
A useful tool in understanding the unitary distance is the \emph{crude
  multiplicity function} $\alpha_{R_n}$ defined for a self-adjoint
operator $R_n$ by
\begin{equation}
  \label{eq:def.cmf}
  \alpha_{R_n}(\lambda)
  := \lim_{r \to 0} \rank \1_{(\lambda-r,\lambda+r)}(R_n)
  \in \NbarO := \{0,1,2,\dots\} \cup \{\infty\}.
\end{equation}
Here, $\1_B(R_n)$ denotes the spectral projection of the self-adjoint
operator $R_n$ onto the Borel set $B \subset \R$.  Crude multiplicity
functions were first used implicitly by Gellar and
Page~\cite{gellar-page:74}.  Later, Azoff and Davis
in~\cite{azoff-davis:84} named them ``crude multiplicity functions'',
defined a \LevyProkhorov distance
$\delta(\alpha_{R_1},\alpha_{R_2})$ and showed that this distance
equals $\dUni(R_1,R_2)$.  In particular, the crude multiplicity
function $\alpha_{R_n}$ is a complete invariant of the (operator norm)
\emph{closure} of the unitary orbit
$\UnitOrbit {R_n}=\set{U_n R_n U_n^*}{U \in \Unitary {\HS_n}}$ .  We
explain the concept of crude multiplicity functions and related
distances in \Subsec{cmf}. 

\subsubsection*{Some known results on the unitary distance and its
  relation to spectra}
We compare now the unitary distance with some distances between the
spectra: For the essential spectrum
$\Sigma_{n,\mathrm{ess}}=\essspec{R_n}$ (see \Sec{spec.thm} for a
reminder), we use the Hausdorff distance
\begin{align}
  \label{eq:d.ess}
  \dHaus(\Sigma_{1,\ess},\Sigma_{2,\ess})
  :=& \inf \set{\eps>0}{\Sigma_{1,\ess} \subset \closedBall_\eps(\Sigma_{2,\ess})
     \text{ and }
     \Sigma_{2,\ess} \subset \closedBall_\eps(\Sigma_{1,\ess})}
\end{align}
where $\closedBall_\eps(\Sigma)$ is the (closed) $\eps$-neighbourhood
of $\Sigma$, see~\eqref{eq:nbd.ball} for more details on the Hausdorff
distance see \Subsec{gh}.

The Hausdorff distance is too simple for the discrete spectrum, as it
does not see multiplicities: operators with spectrum $1,1,2$ and
$1,2,2$ (with multiplicity) will have Hausdorff distance $0$, as the
underlying set in both cases is $\{1,2\}$.  Therefore, for
the discrete spectrum $\Sigma_{n,\disc}=\disspec{R_n}$ we define
\begin{align}
  \nonumber
  \dDisc(R_1,R_2)
  :=& \inf\bigset{\eps>0}{\forall F \in \Fin(\disspec{R_1})\colon
    \rank \1_F(R_1) \le \rank \1_{\closedBall_\eps(F)}(R_2)
    \text{ and }
  \\
  \label{eq:dmult}
    & \hspace{0.1\textwidth}
      \forall F \in \Fin(\disspec{R_2}) \colon
    \rank \1_F(R_2) \le \rank \1_{\closedBall_\eps(F)}(R_1)},
\end{align}
taking multiplicities into account.  Here, $\Fin(\Sigma)$ denotes the
set of all finite subsets of $\Sigma \subset \R$.  For short, we set
\begin{equation}
  \label{eq:dspec}
  \dSpec(R_1,R_2)
  :=     \max\{\dHaus(\essspec{R_1},\essspec{R_2}), \dDisc(R_1,R_2)\}.
\end{equation}

We start with \Thm{main-1}, a result already proven
in~\cite{davidson:86}.
\begin{maintheorem}[{see also~\cite[Prp.~1.3]{davidson:86}}]
  \label{thm:main-1}
  \begin{subequations}
  Assume that $R_n$ are self-adjoint and bounded operators in a
  separable Hilbert space $\HS_n$ for $n=1,2$,
  then we have
  \begin{equation}
    \label{eq:dhaus.duni}
    \dHaus(\spec{R_1},\spec{R_2})
    \le \dUni(R_1,R_2)
  \end{equation}
  and
  \begin{equation}
    \label{eq:dspec.duni}
    \dSpec(R_1,R_2)
    = \dUni(R_1,R_2)
  \end{equation}
  In particular, if $R_1$ and $R_2$ both have purely essential
  spectrum then
  \begin{equation}
    \label{eq:dhaus.duni'}
    \dHaus(\spec{R_1},\spec{R_2})
    = \dUni(R_1,R_2)
  \end{equation}
\end{subequations}
\end{maintheorem}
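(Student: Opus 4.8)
The plan is to establish the three displays in order, using the crude multiplicity function $\alpha_{R_n}$ as the organising device together with the Azoff--Davis identity $\dUni(R_1,R_2)=\delta(\alpha_{R_1},\alpha_{R_2})$ recalled in \Subsec{cmf}, where $\delta$ denotes the L\'evy--Prokhorov distance. For \eqref{eq:dhaus.duni} I would argue directly: conjugation by any $U\in\Unitary{\HS_1,\HS_2}$ is a $*$-isomorphism onto its range, so $\spec{U R_1 U^*}=\spec{R_1}$, and combining this with the classical bound $\dHaus(\spec{A},\spec{B})\le\norm{A-B}$ for self-adjoint $A,B$ on one space (recalled in \Subsec{dist.op}, cf.\ \eqref{eq:herbst-nakamura}), applied to $A=U R_1 U^*$ and $B=R_2$, gives $\dHaus(\spec{R_1},\spec{R_2})\le\norm[\Lin{\HS_2}]{U R_1 U^*-R_2}$; taking the infimum over $U$ yields \eqref{eq:dhaus.duni}. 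A posteriori this also follows from \eqref{eq:dspec.duni}, since $\dHaus(\spec{R_1},\spec{R_2})\le\max\{\dHaus(\essspec{R_1},\essspec{R_2}),\dHaus(\disspec{R_1},\disspec{R_2})\}\le\dSpec(R_1,R_2)$.

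The real content is \eqref{eq:dspec.duni}. The key observation is that $\alpha_{R_n}$ encodes exactly the data entering $\dSpec$: one has $\{\lambda:\alpha_{R_n}(\lambda)=\infty\}=\essspec{R_n}$ (a point lies in the essential spectrum precisely when every neighbourhood carries an infinite-rank spectral projection) and $\{\lambda:0<\alpha_{R_n}(\lambda)<\infty\}=\disspec{R_n}$, where $\alpha_{R_n}(\lambda)$ then equals the multiplicity. I would therefore reduce \eqref{eq:dspec.duni} to the purely spectral identity
\[
  \dSpec(R_1,R_2)=\delta(\alpha_{R_1},\alpha_{R_2}),
\]
after which the claim is immediate from the Azoff--Davis identity. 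The distance $\delta$ compares, for each $\eps$ and each finite $F$, the total $\alpha_{R_1}$-mass on $F$ with the $\alpha_{R_2}$-mass on $\closedBall_\eps(F)$ and symmetrically; splitting this mass into its infinite (essential) and finite (discrete) parts, the infinite part reproduces the Hausdorff condition defining $\dEss$ and the finite part reproduces the rank conditions in \eqref{eq:dmult} defining $\dDisc$. I expect the main obstacle to be exactly this bookkeeping, namely tracking how finite eigenvalue mass interacts with nearby essential mass, which forces careful use of the $\infty$-conventions.

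To make the inequality $\dSpec\le\dUni$ transparent independently of the orbit-closure machinery, I would also verify it by hand. After conjugating by $U$ we compare two self-adjoint operators $A,R_2$ on $\HS_2$ with $\norm{A-R_2}<\eps$. Passing to the Calkin algebra, whose quotient map $\pi$ satisfies $\norm{\pi(A)-\pi(R_2)}\le\norm{A-R_2}$, and using $\essspec{A}=\spec{\pi(A)}$ together with the $C^*$-algebraic spectral Hausdorff bound, gives $\dHaus(\essspec{A},\essspec{R_2})<\eps$. For the discrete part, the two-sided bound $A-\eps\le R_2\le A+\eps$ yields via Glazman's lemma the counting-function shift $\rank\1_{(-\infty,\lambda-\eps)}(A)\le\rank\1_{(-\infty,\lambda)}(R_2)$, which, applied to the intervals composing $\closedBall_\eps(F)$, produces the rank inequalities of \eqref{eq:dmult}. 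The genuinely hard direction remains $\dUni\le\dSpec$, i.e.\ manufacturing a unitary $\HS_1\to\HS_2$ realising the spectral distance; this is precisely the crude-multiplicity-function description of the closure of the unitary orbit, and here I would invoke Azoff--Davis rather than reprove it.

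Finally, \eqref{eq:dhaus.duni'} is immediate: purely essential spectrum means $\disspec{R_n}$ is empty, hence $\dDisc(R_1,R_2)=0$ and $\spec{R_n}=\essspec{R_n}$, so $\dSpec(R_1,R_2)=\dHaus(\essspec{R_1},\essspec{R_2})=\dHaus(\spec{R_1},\spec{R_2})$, and the claim follows from \eqref{eq:dspec.duni}.
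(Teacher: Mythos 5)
Your proposal is correct in outline and follows essentially the same route as the paper: the paper's proof of this theorem is literally the combination of \Thm{azoff-davis-duni} (the Azoff--Davis identity $\dUni(R_1,R_2)=\delta(\alpha_{R_1},\alpha_{R_2})$) with \Prp{dist.cmf.haus}, whose part~(\ref{dist.cmf.haus.b}) is exactly the ``purely spectral identity'' $\delta(\alpha_{R_1},\alpha_{R_2})=\max\{\dHaus(\essspec{R_1},\essspec{R_2}),\dDisc(R_1,R_2)\}$ that you reduce to, proved there by precisely the split of the interval mass into its infinite (essential) and finite (discrete) parts that you describe. Be aware that what you call ``bookkeeping'' is where all the work of \eqref{eq:dspec.duni} sits --- in particular the direction $\delta\le\dSpec$ needs the case analysis on whether a witnessing interval $I$ with $\alpha_2^*(\closedBall_r(I))<\alpha_1^*(I)$ carries infinite or finite $\alpha_1$-mass --- so your plan identifies the right statement but does not yet discharge it. Your primary derivation of \eqref{eq:dhaus.duni} from \eqref{eq:herbst-nakamura} is fine and is the alternative the paper itself points out; your Calkin-algebra/Glazman verification of $\dSpec\le\dUni$ is a legitimate extra (though the counting-function subtraction needs care when $\closedBall_\eps(F)$ meets essential spectrum). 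One genuine slip: in the ``a posteriori'' remark, the inequality $\dHaus(\disspec{R_1},\disspec{R_2})\le\dDisc(R_1,R_2)$ is false in general, since the rank condition in \eqref{eq:dmult} for $F\subset\disspec{R_1}$ can be satisfied by nearby \emph{essential} spectrum of $R_2$ (e.g.\ a simple isolated eigenvalue of $R_1$ sitting at an infinite-multiplicity eigenvalue of $R_2$ gives $\dDisc=0$ while $\disspec{R_2}=\emptyset$); the correct version of that chain compares points of $\disspec{R_1}$ with $\spec{R_2}$, not with $\disspec{R_2}$. Since that remark is redundant with your Herbst--Nakamura argument, it does not affect the proof.
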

The inequality~\eqref{eq:dhaus.duni} follows actually from the
classical result
\begin{equation}
  \label{eq:herbst-nakamura}
  \dHaus(\spec{R_1},\spec{R_2})
  \le \norm[\Lin \HS]{R_1-R_2}
\end{equation}
for operators $R_1,R_2$ acting in the \emph{same} Hilbert space $\HS$,
proven e.g.\ in~\cite[Lem.~A.1]{herbst-nakamura:99}.  We give an
alternative proof here in terms of abstract crude multiplicity
functions not directly referring to operators; also
because~\cite{davidson:86} uses a slightly different version of the
\LevyProkhorov distance $\delta(\alpha_{R_1},\alpha_{R_2})$.  We
discuss various distances between crude multiplicity functions in
\Subsec{cmf}.  We address the question of discrete spectrum and other
``finer'' versions of a Hausdorff distance respecting multiplicities
also in our forthcoming article~\cite{post-zimmer:pre24b} (see
also~\cite{zimmer:24}).

\subsubsection*{Isometric distance}
A less restrictive approach to define a distance for operators acting
in different Hilbert spaces is inspired by Weidmann's concept of a
generalisation of norm resolvent convergence
in~\cite[Sec.~9.3]{weidmann:00}.  We embed both spaces $\HS_1$ and
$\HS_2$ isometrically into a common Hilbert space $\HS$.  The
\emph{isometric distance} of $R_1$ and $R_2$ is then defined by
\begin{subequations}
\begin{equation}
  \label{eq:iso-dist}
  \dIso(R_1,R_2)
  =\inf\bigset{\norm[\Lin \HS]{D_{\HS, \iota_1, \iota_2}(R_1,R_2)}}
  {\iota_n\in \Iso{\HS_n,\HS}, n=1,2},
\end{equation}
where the infimum is taken over all Hilbert spaces $\HS$ and
isometries $\map{\iota_n}{\HS_n}\HS$.  Here
\begin{equation}
  \label{eq:def.diff.op}
  D_{\HS, \iota_1, \iota_2}(R_1,R_2)
  :=\iota_1 R_1 \iota_1^*-\iota_2 R_2 \iota_2^*.
\end{equation}
\end{subequations}
Note that in the isometric setting, different dimensions of $\HS_1$
and $\HS_2$ are allowed; in contrast to the unitary distance.

The definition of $\dIso$ is inspired by the Gromov-Hausdorff distance
between two arbitrary metric spaces, where the infimum of all possible
isometric embeddings into a third metric space is taken, see
\Rem{gh-dist}.

Our first main result is now equality of both distances under a mild
assumption:
\begin{maintheorem}
  \label{thm:main-2}
  Assume that $R_n$ are self-adjoint and bounded operators in a
  separable Hilbert space $\HS_n$ for $n=1,2$.  If
  \begin{subequations}
  \begin{equation}
    \label{eq:cond.ess.spec}
    0 \in \essspec{R_1} \cap \essspec{R_2}
  \end{equation}
  then we have
  \begin{equation}
    \label{eq:duni.eq.diso}
    \dUni(R_1,R_2) = \dIso(R_1,R_2).
  \end{equation}
\end{subequations}
\end{maintheorem}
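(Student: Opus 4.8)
The plan is to establish the two inequalities $\dIso(R_1,R_2)\le\dUni(R_1,R_2)$ and $\dUni(R_1,R_2)\le\dIso(R_1,R_2)$ separately. The first is immediate: given any $U\in\Unitary{\HS_1,\HS_2}$, choose $\HS=\HS_2$, $\iota_2=\id_{\HS_2}$ and $\iota_1=U$; both are isometries and the associated operator $D_{\HS,\iota_1,\iota_2}(R_1,R_2)$ equals $UR_1U^*-R_2$. Taking the infimum over $U$ yields $\dIso(R_1,R_2)\le\dUni(R_1,R_2)$. Note also that $0\in\essspec{R_n}$ forces each $\HS_n$ to be infinite-dimensional, and since both are separable, $\dUni(R_1,R_2)$ is a genuine comparison of equidimensional spaces.

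For the reverse inequality I would fix a Hilbert space $\HS$ and isometries $\iota_n\in\Iso{\HS_n,\HS}$. Replacing $\HS$ by the closed subspace $\overline{\ran\iota_1+\ran\iota_2}$ changes neither the (corestricted) isometries nor $\norm[\Lin\HS]{D_{\HS,\iota_1,\iota_2}(R_1,R_2)}$, because $\iota_nR_n\iota_n^*$ already vanishes on $(\ran\iota_n)^\perp$ and maps into $\ran\iota_n$; so I may assume $\HS$ separable and infinite-dimensional, hence of the same dimension as each $\HS_n$. Now set $T_n:=\iota_nR_n\iota_n^*\in\Lin\HS$, a bounded self-adjoint operator which is unitarily equivalent to $R_n$ on $\ran\iota_n$ and acts as $0$ on $K_n:=\HS\ominus\ran\iota_n$; in other words $T_n$ is a unitary copy of $R_n\oplus 0_{K_n}$.

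The heart of the argument, and the step I expect to be the main obstacle, is to show that $\dUni(T_n,R_n)=0$, which I would prove by comparing crude multiplicity functions. For $\lambda\neq 0$ the block $0_{K_n}$ contributes nothing to $\1_{(\lambda-r,\lambda+r)}(T_n)$ for small $r$, so $\alpha_{T_n}(\lambda)=\alpha_{R_n}(\lambda)$. For $\lambda=0$ it contributes $\dim K_n$, but the hypothesis $0\in\essspec{R_n}$ already forces $\rank\1_{(-r,r)}(R_n)=\infty$ for every $r>0$, whence $\alpha_{R_n}(0)=\infty$; adding $\dim K_n$ leaves this unchanged, so $\alpha_{T_n}(0)=\alpha_{R_n}(0)=\infty$. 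Thus $\alpha_{T_n}=\alpha_{R_n}$, and since the crude multiplicity function is a complete invariant of the closure of the unitary orbit (equivalently, $\dUni$ equals the L\'evy--Prokhorov distance of these functions as recalled in \Subsec{cmf}, or via \eqref{eq:dspec.duni} together with $\dSpec(T_n,R_n)=0$), we obtain $\dUni(T_n,R_n)=0$. This is exactly where the essential-spectrum hypothesis enters: without $0\in\essspec{R_n}$ a nontrivial kernel $K_n$ would raise $\alpha_{R_n}(0)$ and produce a strictly positive unitary distance, so the equality must genuinely fail in general.

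Finally I would combine these facts through the triangle inequality for $\dUni$, which follows from composing unitaries: if $U\colon\HS_A\to\HS_B$ and $V\colon\HS_B\to\HS_C$ are unitary, then $\|VUAU^*V^*-C\|\le\|UAU^*-B\|+\|VBV^*-C\|$, and passing to infima gives $\dUni(A,C)\le\dUni(A,B)+\dUni(B,C)$. Since $T_1$ and $T_2$ act on the common space $\HS$, choosing $U=\id_\HS$ shows $\dUni(T_1,T_2)\le\norm[\Lin\HS]{T_1-T_2}=\norm[\Lin\HS]{D_{\HS,\iota_1,\iota_2}(R_1,R_2)}$. Therefore
\[
\dUni(R_1,R_2)\le\dUni(R_1,T_1)+\dUni(T_1,T_2)+\dUni(T_2,R_2)\le 0+\norm[\Lin\HS]{D_{\HS,\iota_1,\iota_2}(R_1,R_2)}+0 .
\]
Taking the infimum over all $\HS$ and all isometries $\iota_1,\iota_2$ yields $\dUni(R_1,R_2)\le\dIso(R_1,R_2)$, which together with the first inequality gives \eqref{eq:duni.eq.diso}.
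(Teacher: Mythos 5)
Your proof is correct and follows essentially the same route as the paper: the key step in both is that composing with an isometry does not change the crude multiplicity function when $0\in\essspec{R_n}$ (the paper's \Lem{diso_vs_delta}), combined with the Azoff--Davis identification $\dUni=\delta(\alpha_1,\alpha_2)$. You package this as $\dUni(\iota_nR_n\iota_n^*,R_n)=0$ plus the triangle inequality for $\dUni$, whereas the paper writes the same content as a chain of equalities through $\dCmf$; the difference is purely organisational.
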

Note that the inequality $\ge$ in~\eqref{eq:duni.eq.diso} is trivial
as $\iota_1=U_{12}$ and $\iota_2=\id_{\HS_2}$ are isometries into
$\HS=\HS_2$.

The condition~\eqref{eq:cond.ess.spec} implies that both Hilbert
spaces $\HS_1$ and $\HS_2$ are infinite-dimensional.  Moreover, we
cannot drop this condition as counterexamples show (see
\ExS{duni.eq.diso.counterex}{diso.0.duni.not}).  As in most examples
one has unbounded operators such as various Laplacians
$\Delta_n \ge 0$, we consider convergence of their resolvents
$R_n=(\Delta_n+1)^{-1}$.  Note that in this
case,~\eqref{eq:cond.ess.spec} is always fulfilled for the resolvents.

For the proof of the non-trivial inequality $\le$
in~\eqref{eq:duni.eq.diso} we use crude multiplicity functions, namely
that $\alpha_{\iota_n R_n \iota_n^*}=\alpha_{R_n}$
if~\eqref{eq:cond.ess.spec} holds (see \Lem{diso_vs_delta}).

\subsubsection*{Quasi-unitary distance}
We introduce now a third distance for $R_1$ and $R_2$, based on
\emph{quasi-unitary equivalence}, a quantitative generalisation of
unitary equivalence introduced by the first author in~\cite{post:06}.
We define the \emph{quasi-unitary distance} of $R_1$ and $R_2$ as
\begin{subequations}
\begin{equation}
  \label{eq:que-dist}
  \dQUE(R_1,R_2)
  =\inf\bigset{\delta_J(R_1,R_2)} %
  {J \in \Lin{\HS_1,\HS_2}, \norm J \le 1},
\end{equation}
where
\begin{multline}
  \label{eq:delta_J}
  \delta_J(R_1,R_2)
  :=\max\bigl\{
  \norm{R_1^*(\id-J^*J)R_1}^{1/2},
  \norm{R_2^*(\id- JJ^*)R_2}^{1/2},\\
  \norm{JR_1-R_2J},
  \norm{JR_1^*-R_2^*J}
  \bigr\}.
\end{multline}
\end{subequations}
The operator $J$ is also called \emph{identification operator}.  We
need both, the third \emph{and} fourth term, in~\eqref{eq:delta_J} for
non-self-adjoint operators $R_n$ for the symmetry of $\dQUE$, see
\Lem{dque.metric}.

If $\delta \ge \delta_J(R_1,R_2)$ we say that $R_1$ and $R_2$ are
\emph{$\delta$-quasi unitarily equivalent}.  Quasi-unitary equivalence
is a quantitative generalisation of unitary equivalence for operators
$R_n$ with dense range, as $\delta_J(R_1,R_2)=0$ then implies that $J$
is unitary and $R_1$ and $R_2=JR_1J^*$ are unitarily equivalent, see
\Lem{uni.eq.dense}.  Note that we use a slightly modified variant of
quasi-unitary equivalence compared to~\cite{post-zimmer:22} and
earlier publications, see \Rem{new-que}.  We compare these different
notions of quasi-unitary equivalences and their associated distances
in a forthcoming article~\cite{post-zimmer:pre24c}.

\begin{maintheorem}
  \label{thm:main-3}
  Assume that $R_n$ are bounded operators in a separable Hilbert space
  $\HS_n$ for $n=1,2$, then we have
  \begin{equation}
    \label{eq:dque.eq.diso}
    \dQUE(R_1,R_2)
    \le \dIso(R_1,R_2)
    \le \sqrt 3\dQUE(R_1,R_2).
  \end{equation}
\end{maintheorem}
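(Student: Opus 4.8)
The plan is to prove the two inequalities separately, the left one being elementary and the right one carrying all the difficulty.

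For $\dQUE(R_1,R_2) \le \dIso(R_1,R_2)$ I would run the embedding backwards into an identification operator. Given arbitrary isometries $\iota_n \colon \HS_n \to \HS$ into a common space, set $J := \iota_2^* \iota_1 \in \Lin{\HS_1,\HS_2}$, so $\|J\| \le 1$. Writing $D := \iota_1 R_1 \iota_1^* - \iota_2 R_2 \iota_2^*$ and $P_n := \iota_n \iota_n^*$, the relations $\iota_n^* \iota_n = \id$ and $P_n^\perp \iota_n = 0$ exhibit each of the four terms of $\delta_J$ as a compression of $D$: one finds $\id - J^*J = \iota_1^* P_2^\perp \iota_1$, hence $R_1^*(\id - J^*J)R_1 = (P_2^\perp \iota_1 R_1)^*(P_2^\perp \iota_1 R_1)$ with $P_2^\perp \iota_1 R_1 = P_2^\perp D \iota_1$; symmetrically for the $R_2$-term; and $JR_1 - R_2 J = \iota_2^* D \iota_1$, $JR_1^* - R_2^* J = \iota_2^* D^* \iota_1$. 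Each is bounded by $\|D\|$, so $\delta_J(R_1,R_2) \le \|D\|$, and taking the infimum over all embeddings gives the claim.

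For the reverse inequality I would dilate. Fix a contraction $J$, put $\delta := \delta_J(R_1,R_2)$ and $D_J := (\id - J^*J)^{1/2}$, and embed into $\HS := \HS_2 \oplus \HS_1$ via $\iota_2 := \binom{\id}{0}$ and $\iota_1 := \binom{J}{D_J}$, which are isometries with $\iota_2^*\iota_1 = J$. With $T := \iota_1 R_1 \iota_1^* - \iota_2 R_2 \iota_2^*$, and $u := \iota_1^* x = J^* v + D_J w$, $v := \iota_2^* x$ for $x = (v,w)$, the isometry relations and $\iota_2^*\iota_1 = J$ give, after completing the square, the key identity
\[
 \|Tx\|^2 = \|R_1 u - J^* R_2 v\|^2 + \|D_{J^*} R_2 v\|^2 ,
\]
where $D_{J^*} := (\id - JJ^*)^{1/2}$. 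The second summand is controlled by term two, $\|D_{J^*} R_2 v\| \le \delta\|v\|$, and in the first summand $R_1 u - J^* R_2 v = (R_1 J^* - J^* R_2)v + R_1 D_J w$ with $\|(R_1 J^* - J^* R_2)v\| \le \delta\|v\|$ by the adjoint of term four.

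The clean case is $R_1, R_2$ self-adjoint: then $\|R_1 D_J\| = \|D_J R_1\| \le \delta$ by term one, so $\|R_1 u - J^* R_2 v\| \le \delta\|v\| + \delta\|w\|$, and using $(\|v\|+\|w\|)^2 \le 2(\|v\|^2+\|w\|^2)$ and $\|v\| \le \|x\|$ the identity yields $\|Tx\|^2 \le 2\delta^2\|x\|^2 + \delta^2\|x\|^2 = 3\delta^2\|x\|^2$, i.e. $\dIso \le \|T\| \le \sqrt3\,\delta$; the constant $\sqrt3$ is precisely this $2+1$ split. The main obstacle is the general (non-self-adjoint) case: term one of $\delta_J$ controls only the output-side defect $\|D_J R_1\|$, whereas the identity produces the input-side defect $\|R_1 D_J\|$, which is not dominated by $\delta_J$; indeed $JR_1 D_J$ is the off-diagonal block of $T$, so $\|JR_1 D_J\| \le \|T\|$ and any naive bootstrap closes with coefficient one. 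Passing to $T^*$ swaps the roles — its difference term becomes $R_1^* D_J$, controlled by term one, at the cost of an uncontrolled $\|R_2 D_{J^*}\|$ — so neither $T$ nor $T^*$ suffices alone. This duality is exactly why $\delta_J$ carries both intertwiner conditions $\|JR_1 - R_2 J\|$ and $\|JR_1^* - R_2^* J\|$, and the crux of the argument is to combine them — for instance routing the estimate through the controlled even-power combinations $R_1 D_J^2 = D_J^2 R_1 + J^* B_1 - B_2 J$ and $R_2 D_{J^*}^2 = D_{J^*}^2 R_2 - JB_2 + B_1 J^*$, or choosing a more refined isometric embedding — so as to recover $\|T\| \le \sqrt3\,\delta$ for arbitrary bounded $R_n$, after which the infimum over $J$ finishes the proof.
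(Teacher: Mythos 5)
Your proof of the first inequality is exactly the paper's: set $J=\iota_2^*\iota_1$ and read off the four terms of $\delta_J$ as compressions of $D$. The identities you use ($\id-J^*J=\iota_1^*P_2^\perp\iota_1$, $P_2^\perp\iota_1R_1=P_2^\perp D\iota_1$, $JR_1-R_2J=\iota_2^*D\iota_1$, and the adjoint version) all check out, so $\delta_J(R_1,R_2)\le\norm{D}$ and the infimum gives $\dQUE\le\dIso$. For the second inequality you also use the same Sz.-Nagy-type dilation as the paper ($\HS=\HS_2\oplus\HS_1$, $\iota_1f_1=(Jf_1,(\id-J^*J)^{1/2}f_1)$, $\iota_2f_2=(f_2,0)$ up to ordering of the summands), your ``key identity'' for $\normsqr{Tx}$ is correct, and your estimate does give $\norm{T}\le\sqrt3\,\delta_J(R_1,R_2)$ when $R_1,R_2$ are self-adjoint. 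The gap is that the theorem is stated for arbitrary bounded operators and you do not close the general case: your route needs $\norm{R_1D_J}=\norm{R_1(\id-J^*J)R_1^*}^{1/2}$, whereas $\delta_J$ only controls $\norm{D_JR_1}=\norm{R_1^*(\id-J^*J)R_1}^{1/2}$, and, as you correctly observe, passing to $T^*$ merely transfers the uncontrolled defect to $R_2$. Your final sentence gestures at a combination of the two intertwining terms but does not carry it out, so as written the proposal establishes the second inequality only under the additional hypothesis of self-adjointness.

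That said, the obstruction you isolate is genuine and is not visibly resolved in the paper's own proof either. The paper estimates $\normsqr{Df}\le 2\max\bigl\{\normsqr{P_2DP_1},\normsqr{P_2DP_1^\perp}\bigr\}\normsqr{f}+\normsqr{P_2^\perp DP_1}\normsqr{f}$ and then substitutes $\normsqr{P_2DP_1^\perp}=\norm{R_2^*(\id-JJ^*)R_2}$; but a direct computation gives $(P_2DP_1^\perp)(P_2DP_1^\perp)^*=\iota_2R_2(\id-JJ^*)R_2^*\iota_2^*$, i.e.\ $\normsqr{P_2DP_1^\perp}=\norm{R_2(\id-JJ^*)R_2^*}$, which coincides with the term appearing in $\delta_J$ only when $\norm{R_2D_{J^*}}=\norm{D_{J^*}R_2}$ (e.g.\ for self-adjoint $R_2$). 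So your argument and the paper's hit the same transposed-defect problem, merely on opposite operators ($R_1D_J$ for you, $R_2D_{J^*}$ for the paper). To repair the general case one must either enlarge $\delta_J$ by the transposed defects $\norm{R_n(\id-\,\cdot\,)R_n^*}^{1/2}$ or supply an extra argument bounding them by the existing four terms; for self-adjoint operators both your proof and the paper's are complete.
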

Note that in \Thm{main-3}, the Hilbert spaces $\HS_1$ and $\HS_2$ are
allowed to have different dimensions.  We doubt that the constant
$\sqrt 3$ is sharp, but it has to be at least $\sqrt 2$; see
\Cor{at.least.sqrt2}.

It is worth mentioning that for all distances $d_\bullet$ ($\dUni$,
$\dIso$ and $\dQUE$) we may have
$d_\bullet(R_1,R_2)<\norm[\Lin \HS]{R_1-R_2}$ even if
$\HS_1=\HS_2=\HS$, i.e., if the operators act in the \emph{same}
Hilbert space, see \Ex{dist.norm.diff}.

Summarising the results from \ThmS{main-1}{main-3}, we conclude:
\begin{maincorollary}
  \label{cor:main-1'}
  If $R_n$ are self-adjoint and $0 \in \essspec{R_n}$ for $n=1,2$, then we have
  \begin{align*}
    \dQUE(R_1,R_2)
    \le \dIso(R_1,R_2)
    = \dUni(R_1,R_2)
    = \dSpec(R_1,R_2)
    \le \sqrt 3 \dQUE(R_1,R_2).
  \end{align*}
\end{maincorollary}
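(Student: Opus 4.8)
The plan is to chain together the three main theorems, since the corollary is nothing more than their combination under the joint hypothesis that $R_1$ and $R_2$ are self-adjoint and bounded with $0 \in \essspec{R_1} \cap \essspec{R_2}$. First I would invoke \Thm{main-1}, whose only standing assumption is self-adjointness of the bounded operators $R_n$; its identity~\eqref{eq:dspec.duni} yields $\dSpec(R_1,R_2) = \dUni(R_1,R_2)$. Next, the extra condition $0 \in \essspec{R_n}$ for $n=1,2$ is exactly the hypothesis~\eqref{eq:cond.ess.spec} needed to apply \Thm{main-2}, whose conclusion~\eqref{eq:duni.eq.diso} gives $\dUni(R_1,R_2) = \dIso(R_1,R_2)$. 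Combining these two identities already produces the central equalities $\dSpec(R_1,R_2) = \dUni(R_1,R_2) = \dIso(R_1,R_2)$.

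Finally, \Thm{main-3} holds for arbitrary bounded operators, so in particular under our hypotheses, and its two-sided estimate~\eqref{eq:dque.eq.diso} supplies $\dQUE(R_1,R_2) \le \dIso(R_1,R_2)$ on the left and $\dIso(R_1,R_2) \le \sqrt 3\, \dQUE(R_1,R_2)$ on the right. Substituting the equality $\dIso = \dUni = \dSpec$ just obtained into the right-hand estimate rewrites it as $\dSpec(R_1,R_2) \le \sqrt 3\, \dQUE(R_1,R_2)$, and concatenating the left inequality, the two equalities, and this last inequality gives the displayed chain.

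There is no genuine obstacle here: the entire content sits in the three theorems, and the only thing to check is that the corollary's hypotheses are strong enough to trigger each of them — self-adjointness for \Thm{main-1}, self-adjointness together with $0 \in \essspec{R_n}$ for \Thm{main-2}, and mere boundedness for \Thm{main-3}. Since all three sets of hypotheses hold simultaneously under the corollary's assumptions, the result follows at once by transitivity of the equalities and inequalities. If anything deserves a remark, it is merely that the rightmost bound $\dSpec \le \sqrt 3\, \dQUE$ is not read off directly from any single theorem but arises from feeding the equality $\dIso = \dSpec$ into the estimate of \Thm{main-3}.
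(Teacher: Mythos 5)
Your proposal is correct and coincides with the paper's own derivation: the corollary is stated there as an immediate summary of Theorems~\ref{thm:main-1}--\ref{thm:main-3}, obtained by exactly the chaining you describe (Theorem~\ref{thm:main-1} for $\dSpec=\dUni$, Theorem~\ref{thm:main-2} for $\dUni=\dIso$ under~\eqref{eq:cond.ess.spec}, and Theorem~\ref{thm:main-3} for the two-sided comparison with $\dQUE$).
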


If one of the distances is $0$ then we have:
\begin{maincorollary}
  \label{cor:main-4}
  The following assertions are equivalent for general
  $R_n \in \Lin{\HS_n}$ ($n=1,2$):
  \begin{enumerate}
  \item
    \label{main-4.a}
    $\dIso(R_1,R_2)=0$;\footnote{$\dIso(R_1,R_2)=0$ means that there
      are isometries $\iota_{k,1}$ resp.\ $\iota_{k,2}$ from $\HS_1$
      resp.\ $\HS_2$ into some Hilbert space $\HS^{(k)}$ such that
      $\norm[\Lin{\HS^{(k)}}]{\iota_{k,1}R_1\iota_{k,1}^*-\iota_{k,2}R_2\iota_{k,2}^*}
      \to 0$; without loss of generality, one may assume that there is
      \emph{one} parent space for all $k$, namely
      $\hat \HS:=\bigoplus_k \HS^{(k)}$ and
      $\map{\hat \iota_{k,n}}{\HS_n}{\hat \HS}$ maps $f_n$ into
      $\hat \iota_{k,n} f_n=(\delta_{k,k'}) \iota_{k',n} f_n$;}
  \item
    \label{main-4.b}
    $\dQUE(R_1,R_2)=0$.\footnote{$\dQUE(R_1,R_2)=0$ means that there
      are $\map {J_k}{\HS_1}{\HS_2}$ with $\norm{J_k}\le 1$,
      $\norm{R_1^*(\id-J_k^*J_k)R_1} \to 0$,
      $\norm{R_2^*(\id-J_kJ_k^*)R_2} \to 0$,
      $\norm{J_k R_1-R_2J_k} \to 0$ and
      $\norm{J_k R_1^*-R_2^*J_k} \to 0$.}
  \end{enumerate}
  The following assertions are equivalent for self-adjoint operators
  $R_n \in \Lin{\HS_n}$ ($n=1,2$):
  \begin{enumerate}
    \addtocounter{enumi}{2}
  \item
    \label{main-4.c}
    $\dUni(R_1,R_2)=0$, i.e., $R_1$ and $R_2$ are approximately
    unitarily equivalent, i.e., there are unitary operators
    $\map {U_n}{\HS_1}{\HS_2}$ such that
    $\norm{U_nR_1U_n^*-R_2} \to 0$.;
  \item
    \label{main-4.e}
    the crude multiplicity functions agree, i.e.,
    $\alpha_{R_1}=\alpha_{R_2}$ on $\R$;
  \item
    \label{main-4.f}
    $\dSpec(R_1,R_2)=0$, or equivalently,
    \begin{align*}
      \essspec{R_1}=\essspec{R_2}
      \quadtext{and}
      \disspec{R_1}=\disspec{R_2}
    \end{align*}
    (including equality of the multiplicities of the discrete
    eigenvalues);
  \end{enumerate}
  Moreover, one of~\itemref{main-4.c}--\itemref{main-4.f} implies one
  of~\itemref{main-4.a}--\itemref{main-4.b}.  If additionally,
  $0 \in \essspec{R_1}\cap \essspec{R_2}$ for self-adjoint $R_1$ and
  $R_2$, then the opposite implications are true, i.e.\
  ~\itemref{main-4.a}--\itemref{main-4.f} are all equivalent.
\end{maincorollary}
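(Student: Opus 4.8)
The plan is to read \Cor{main-4} off from \ThmS{main-1}{main-3} together with the description of crude multiplicity functions from \Subsec{cmf}. There is no new estimate to establish, only a careful assembly of the equalities and inequalities already available, keeping track of exactly which hypotheses each implication requires. I would begin with the unconditional part \eqref{main-4.a}$\iff$\eqref{main-4.b}: by \Thm{main-3} we have $\dQUE(R_1,R_2)\le\dIso(R_1,R_2)\le\sqrt3\,\dQUE(R_1,R_2)$ for arbitrary bounded $R_n$, so the two quantities are trapped between fixed constant multiples of one another and one vanishes precisely when the other does. This gives \eqref{main-4.a}$\iff$\eqref{main-4.b} with no self-adjointness assumption.

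For the self-adjoint block I would first treat \eqref{main-4.c}$\iff$\eqref{main-4.f}, which is immediate from the identity $\dSpec(R_1,R_2)=\dUni(R_1,R_2)$ of \Thm{main-1}; the reformulation of $\dSpec(R_1,R_2)=0$ as equality of essential spectra together with equality of discrete spectra counting multiplicities is just unwinding the definitions \eqref{eq:dspec}, \eqref{eq:d.ess} and \eqref{eq:dmult}. For \eqref{main-4.c}$\iff$\eqref{main-4.e} I would use that $\alpha_{R_n}$ is a complete invariant of the norm-closure of the unitary orbit (equivalently $\dUni(R_1,R_2)=\delta(\alpha_{R_1},\alpha_{R_2})$ with $\delta$ a genuine metric, see \Subsec{cmf}): hence $\dUni(R_1,R_2)=0$, i.e.\ $R_1$ and $R_2$ are approximately unitarily equivalent in the sense read off from the infimum in \eqref{eq:uni-dist}, holds iff $\alpha_{R_1}=\alpha_{R_2}$. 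This closes the equivalence of \eqref{main-4.c}, \eqref{main-4.e} and \eqref{main-4.f}. To link the two blocks, the trivial inequality $\dIso(R_1,R_2)\le\dUni(R_1,R_2)$ (take $\iota_1=U$ and $\iota_2=\id_{\HS_2}$ in \eqref{eq:iso-dist}) gives \eqref{main-4.c}$\implies$\eqref{main-4.a}, which combined with the equivalences just shown yields that one of \eqref{main-4.c}--\eqref{main-4.f} implies one of \eqref{main-4.a}--\eqref{main-4.b}.

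Finally, under the additional hypothesis $0\in\essspec{R_1}\cap\essspec{R_2}$, \Thm{main-2} promotes the last inequality to the equality $\dUni(R_1,R_2)=\dIso(R_1,R_2)$, so now \eqref{main-4.a}$\implies$\eqref{main-4.c} as well; this closes the cycle and makes all of \eqref{main-4.a}--\eqref{main-4.f} equivalent. The main (and essentially only) delicate point is the separation property invoked in \eqref{main-4.c}$\iff$\eqref{main-4.e}, namely that $\delta(\alpha_{R_1},\alpha_{R_2})=0$ forces $\alpha_{R_1}=\alpha_{R_2}$ \emph{pointwise} rather than merely after the L\'evy--Prokhorov smoothing, so that equality of the crude multiplicity functions also pins down the common dimension; everything else is bookkeeping, with the essential-spectrum condition entering only to run \Thm{main-2} in the last implication.
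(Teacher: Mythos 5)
Your assembly is correct and is essentially the paper's own route: \Cor{main-4} is stated there without a separate proof, being precisely the combination of \ThmS{main-1}{main-3} with the trivial inequality $\dIso\le\dUni$, the identity $\dUni(R_1,R_2)=\delta(\alpha_{R_1},\alpha_{R_2})$ from \Thm{azoff-davis-duni}, and \Thm{main-2} for the converse under $0\in\essspec{R_1}\cap\essspec{R_2}$ (the equivalence of \eqref{main-4.c}--\eqref{main-4.f} being attributed to Gellar--Page and Azoff--Davis). The one step you flag but do not carry out --- that $\delta(\alpha_{R_1},\alpha_{R_2})=0$ forces \emph{pointwise} equality of the crude multiplicity functions --- is indeed the only non-bookkeeping point; it follows from the paper's machinery (shrink open intervals around a fixed $\lambda$ and use \Defenum{cmf}{def.cmf.b} together with closedness of the support), or can be sidestepped by routing \eqref{main-4.e}$\iff$\eqref{main-4.f} through \Prpenum{char.cmf}{char.cmf.b} and then \eqref{main-4.f}$\iff$\eqref{main-4.c} through \Thm{main-1}.
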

The equivalence of~\itemref{main-4.c}--\itemref{main-4.f} is basically
shown in~\cite{gellar-page:74} (even for normal operators) and also
in~\cite{azoff-davis:84}.  Note
that~\itemref{main-4.a}$\implies$\itemref{main-4.c} or
~\itemref{main-4.b}$\implies$\itemref{main-4.c} may fail if $0$ is not
in the essential spectrum, see \Ex{diso.0.duni.not}.

It is worth noting the classical fact that $\dUni(R_1,R_1)=0$ (or any
other of the equivalent conditions above) does \emph{not} imply that
$R_1$ and $R_2$ are unitarily equivalent.  Actually, $R_1$ and $R_2$
must have the same essential spectrum, but the essential spectrum may
either be dense pure point, absolutely or singularly continuous or of
mixed type.  For different spectral types, such operators are not
mutually unitarily equivalent, see e.g.~\cite[Examples~1.2, 1.4
and~1.6]{abrahamse:78} and \Subsec{duni.zero}.

\subsection{Convergence of operators acting in different spaces}
\label{ssec:conv.ops}
We now address the question of convergence of a sequence of operators
$R_n$ acting in Hilbert spaces $\HS_n$ towards an operator $R_\infty$
acting in $\HS_\infty$.  In our previous article~\cite{post-zimmer:22}
we introduced the notion of operator-norm convergence of resolvents in
the sense of Weidmann (for short \emph{Weidmann's convergence}) and in
the sense of quasi-unitary equivalence (for short
\emph{QUE-convergence}), generalising both the classical concept of
norm resolvent convergence presented e.g.\ in~\cite{kato:66}
and~\cite{reed-simon-1}.  Originally, both concepts
in~\cite{post-zimmer:22} were formulated for \emph{unbounded}
operators $\D_n$, but these operators enter only via their resolvents
\begin{equation}
  \label{eq:resolvents}
  R_n=(\D_n-z_0)^{-1}
  \quadtext{in a common point}
  z_0 \in \bigcap_{n \in \Nbar} \rho(\D_n),
\end{equation}
where $\rho(\D_n):=\C \setminus \spec {\D_n}$ denotes the
\emph{resolvent set} (we will not stress the dependency on $z_0$ in
the notation).  For
$\bullet \in \{\mathrm{uni},\mathrm{iso},\mathrm{que}\}$ we define the
$\bullet$-resolvent distance of $\D_1$ and $\D_2$ by
\begin{align*}
  \bar d_\bullet(\D_1,\D_2)
  := d_\bullet(R_1,R_2),
\end{align*}
and we suppress the dependence on $z_0$ in the notation.
Note that~\eqref{eq:cond.ess.spec} is equivalent with the fact that
$A_1$ and $A_2$ are both unbounded operators.

The main result in~\cite{post-zimmer:22} is the
equivalence of Weidmann's and QUE-convergence, although there might be
a loss of convergence speed when using the original definition of
quasi-unitary equivalence, see \Rem{new-que} for details.

Our last main result is now the following (for the definition of
$\D_n \gnrcW \D_\infty$ and $\D_n \gnrc \D_\infty$ we refer to
\Defs{gnrc-weid}{gnrc-que}):
\begin{maintheorem}
  \label{thm:main-5}
  Let $(\D_n)_n$ be a sequence of closed operators with common
  resolvent point $z_0$ each acting in $\HS_n$
  ($n \in \Nbar=\N \cup \{\infty\}$).  The following statements are
  equivalent:
  \begin{enumerate}
  \item
    \label{main-5.a}
    $\D_n \gnrcW \D_\infty$.
  \item
    \label{main-5.b}
    $\bardIso(\D_n,\D_\infty) \to 0$, i.e.,
    $\dIso(R_n,R_\infty) \to 0$ for $n \to \infty$.
  \item
    \label{main-5.c}
    $\D_n \gnrc \D_\infty$.
  \item
    \label{main-5.d}
    $\bardQUE(\D_n, \D_\infty) \to 0$, i.e., $\dQUE(R_n, R_\infty) \to 0$
    for $n \to \infty$.
  \end{enumerate}
  If the operators $\D_n$ are self-adjoint ($n \in \Nbar$), then the
  following statements are equivalent:
  \begin{enumerate}
    \addtocounter{enumi}{4}
  \item
    \label{main-5.e}
    $\D_n \gnrcUni \D_\infty$, i.e., there are unitary operators
    \begin{align*}
      \map{U_n}{\HS_n}{\HS_\infty}
      \quadtext{such that}
      \norm[\Lin{\HS_\infty}]{U_nR_nU_n^*-R_\infty} \to 0.
    \end{align*}
  \item
    \label{main-5.f}
    $\bardUni(\D_n,\D_\infty) \to 0$, i.e.,
    $\dUni(R_n,R_\infty) \to 0$ for $n \to \infty$.
  \item
    \label{main-5.f'}
    $\dSpec(R_n,R_\infty) \to 0$ for $n \to \infty$.
  \end{enumerate}
  Moreover, if $\D_n$ are self-adjoint ($n \in \Nbar$), then
  \itemref{main-5.e}--\itemref{main-5.f'} each implies one
  of~\itemref{main-5.a}--\itemref{main-5.d}.

  If all operators $\D_n$ are additionally unbounded ($n \in \Nbar$),
  then the converse is true,
  i.e.~\itemref{main-5.a}--\itemref{main-5.f'} are equivalent.

  If in addition, all operators $\D_n$ have purely essential spectrum
  ($n \in \Nbar$), then the following statement is also equivalent
  to~\itemref{main-5.a}--\itemref{main-5.f}):
  \begin{enumerate}
    \addtocounter{enumi}{7}
  \item
    \label{main-5.g}
    $\dHaus(\spec{R_n},\spec{R_\infty}) \to 0$.
  \end{enumerate}
  The convergence speed is the same in all cases.
\end{maintheorem}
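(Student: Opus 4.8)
The plan is to convert every convergence statement into a statement about one of the operator distances applied to the resolvents $R_n=(\D_n-z_0)^{-1}$, using the identity $\bar d_\bullet(\D_n,\D_\infty)=d_\bullet(R_n,R_\infty)$, and then to feed the resolvent pairs into the pointwise comparison results \ThmS{main-1}{main-3} and \Cor{main-1'}. The decisive preliminary observation is that each $\D_n$ is unbounded, so $0\in\essspec{R_n}$ for every $n\in\Nbar$ (this is the equivalence recorded just after~\eqref{eq:resolvents}); consequently the essential-spectrum hypothesis~\eqref{eq:cond.ess.spec} is automatic for every pair $(R_n,R_\infty)$, and both \Thm{main-2} and \Cor{main-1'} become applicable to the resolvents without any extra assumption.

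For the general block \itemref{main-5.a}--\itemref{main-5.d} I would argue as follows. \Thm{main-3} yields $\dQUE(R_n,R_\infty)\le\dIso(R_n,R_\infty)\le\sqrt3\,\dQUE(R_n,R_\infty)$, so $\dIso(R_n,R_\infty)\to0$ exactly when $\dQUE(R_n,R_\infty)\to0$, the two rates differing by at most the universal factor $\sqrt3$; this is \itemref{main-5.b}$\Leftrightarrow$\itemref{main-5.d}. The equivalence \itemref{main-5.a}$\Leftrightarrow$\itemref{main-5.c} of Weidmann's and QUE-convergence is the main theorem of~\cite{post-zimmer:22}. It then remains to match the two ``named'' convergence notions with their distances, i.e.\ \itemref{main-5.a}$\Leftrightarrow$\itemref{main-5.b} by unwinding \Def{gnrc-weid} against the infimum defining $\dIso$, and \itemref{main-5.c}$\Leftrightarrow$\itemref{main-5.d} by unwinding \Def{gnrc-que} against $\dQUE$; here the modified variant of quasi-unitary equivalence is precisely what prevents the loss of speed flagged in \Rem{new-que}.

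For the self-adjoint block \itemref{main-5.e}--\itemref{main-5.f'} the comparison is already sharp: both resolvents inherit self-adjointness from $\D_n$ and satisfy $0\in\essspec{R_n}$, so \Cor{main-1'} provides the chain of \emph{equalities} $\dIso(R_n,R_\infty)=\dUni(R_n,R_\infty)=\dSpec(R_n,R_\infty)$. Hence \itemref{main-5.b}, \itemref{main-5.f} and \itemref{main-5.f'} are equivalent with literally identical convergence speed, tying this block to the general one. The statement \itemref{main-5.e} ($\D_n\gnrcUni\D_\infty$) differs from \itemref{main-5.f} only in that it asserts the existence of near-optimal unitaries whereas \itemref{main-5.f} asserts that the infimum over unitaries tends to zero; these agree by the routine device of selecting $U_n$ within $1/n$ of the infimum. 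For the purely essential case \itemref{main-5.g} I would first record that if $\D_n$ has purely essential spectrum then so does $R_n$ (the resolvent map carries $\essspec{\D_n}$, together with the extra limit point $0$, onto $\essspec{R_n}$ and creates no discrete eigenvalues), whence \Thm{main-1} supplies the equality $\dHaus(\spec{R_n},\spec{R_\infty})=\dUni(R_n,R_\infty)$ and thus \itemref{main-5.g}$\Leftrightarrow$\itemref{main-5.f}, again with the same rate.

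I expect the genuine obstacle to lie in the two definitional equivalences \itemref{main-5.a}$\Leftrightarrow$\itemref{main-5.b} and \itemref{main-5.c}$\Leftrightarrow$\itemref{main-5.d}: every other link reduces to an already-established equality or to the two-sided bound of \Thm{main-3}, but these require carefully reconciling the convergence conditions built into \Defs{gnrc-weid}{gnrc-que} with the infima defining $\dIso$ and $\dQUE$, and in the QUE case verifying that the modified notion reproduces the distance \emph{without} degrading the rate. Once those links are secured, the final ``same convergence speed'' claim is immediate, since every remaining connection is either an exact equality of distances or a bound by the fixed constant $\sqrt3$.
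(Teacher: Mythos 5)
There is a genuine gap, and it is exactly the one you flag at the end without resolving: the implication from \itemref{main-5.b} to \itemref{main-5.a}. Unwinding \Def{gnrc-weid} against the infimum defining $\dIso$ gives \itemref{main-5.a}$\implies$\itemref{main-5.b} for free, but the converse does \emph{not} follow by ``reconciling definitions'': $\dIso(R_n,R_\infty)\to 0$ only provides, for each $n$, a parent space $\HS^{(n)}$ containing $\HS_n$ and $\HS_\infty$, whereas Weidmann convergence demands a \emph{single} parent space into which \emph{all} $\HS_n$ and $\HS_\infty$ are embedded simultaneously. The paper points this out explicitly after stating the theorem, and closes the loop by a different route: \itemref{main-5.b}$\iff$\itemref{main-5.d} via \Thm{main-3}, \itemref{main-5.d}$\iff$\itemref{main-5.c} by the infimum (here no common-space issue arises, since QUE-convergence only involves one operator $J_n$ per index), and then \itemref{main-5.c}$\implies$\itemref{main-5.a} by an explicit construction: the common parent space $\HSgen=\HS_\infty\oplus\bigoplus_{n=1}^\infty\HS_n$ with the Sz.-Nagy-type isometries $\iota_nf_n=(J_nf_n,0,\dots,0,(\id-J_n^*J_n)^{1/2}f_n,0,\dots)$, followed by the norm estimates of \Prp{hilfsmittel.iso.vs.weid}. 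Without this (or an equivalent) construction your cycle of implications does not close.

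A second, related weakness: you invoke ``the main theorem of~\cite{post-zimmer:22}'' for \itemref{main-5.a}$\iff$\itemref{main-5.c}. The paper deliberately does not do this, for two reasons. First, \Def{gnrc-que} here is a \emph{modified} variant of QUE-convergence (the resolvents sandwich the defect operators and square roots are taken, cf.\ \Rem{new-que}), so the earlier equivalence does not literally apply. Second, the earlier result may lose convergence speed, which would break the final claim that the speed is the same in all cases; the whole point of the new definition and of \Thm{main-3} is to avoid that loss. The remaining links in your proposal (the use of \Thm{main-1}, \Thm{main-2}, \Cor{main-1'}, the observation that unboundedness of $\D_n$ forces $0\in\essspec{R_n}$, and the treatment of \itemref{main-5.e}--\itemref{main-5.g}) agree with the paper's argument.
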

There is a subtle point in the definition of Weidmann's convergence
(see \Sec{conv} for details): we assume in~\itemref{main-5.a} the
existence of a \emph{common} Hilbert space in which \emph{all} Hilbert
spaces $\HS_n$ are embedded \emph{simultaneously}, while for the
definition of $\dIso(R_n,R_\infty)$ we only need a common space for
$R_n$ and $R_\infty$, which still may depend on $n \in \N$.  So
\Thm{main-5} does not immediately follow from our main results
in~\cite{post-zimmer:22}.

\subsection{Related works}
\label{ssec:related.works}

Probably the starting point of relations between the spectra and norms
of operators is an inequality formulated in~\cite[Satz~I]{weyl:12} for
operators $R_n$ in $\Lsqr{[a,b]}$ with kernel function
$r_n \in \Lsqr{[a,b]^2}$, which (in modern language) leads to
\begin{align}
  \label{eq:weyl}
  \bigabs{\lambda_k(R_1)-\lambda_k(R_2)}
  \le \max_k\bigabs{\lambda_k(R_1-R_2)}
  = \norm[\Lin{\Lsqr{[a,b]}}]{R_1-R_2},
\end{align}
where $\lambda_k(R_n)$ is the $k$-th eigenvalue of the compact
operator $R_n$ ordered decreasingly according to the absolute value of
$\lambda_k(R_n)$ and repeated according to their multiplicity.  Taking
now unitarily equivalent operators do not change the left hand side
of~\eqref{eq:weyl}.  The infimum over all such unitary equivalent
operators gives $\dUni(R_1,R_2)$, and the infimum is actually a
minimum, namely, when both operators are diagonalised and eigenvalues
are ordered properly.  In this case, we have even equality of the left
hand side and $\dUni(R_1,R_2)$.  A good overview of related results
for matrices or compact operators can be found in
Bhatia~\cite{bhatia:07} and references therein.  One of the main goals
of~\cite{bhatia:07} is to express distances between eigenvalues of two
operators (matrices) $R_1$ and $R_2$ acting in a common Hilbert space
$\HS$ in terms of $\norm{R_1-R_2}$.  Some recent developments on
generalisations of Schatten classes can be found
in~\cite{gueven-bandtlow:21}.  A focus on non-self-adjoint matrices
and operators is given in~\cite[Sec.~4]{gil:03}.  We will relate
eigenvalue matchings of compact (and Schatten class) operators acting
in different spaces and its relation with operator distances such as
$\dUni$ and $\dIso$ in~\cite{post-zimmer:pre24b} (see
also~\cite{zimmer:24}).  .

We already mentioned the work of Gellar and Page~\cite{gellar-page:74}
who characterised the closure of the unitary orbit of a normal
operator in terms of, what is called here \emph{crude multiplicity
  function}.  It was introduced under this name by Azoff and
Davis~\cite{azoff-davis:84}, where they showed that the unitary
distance and the \LevyProkhorov distance of the crude multiplicity
functions agree for self-adjoint bounded operators ($R^*=R$) acting in
the \emph{same} Hilbert space, see \Prp{azoff-davis}.  For
\emph{normal} operators ($R^*R=RR^*$) one naturally asks if
$\dUni(R_1,R_2)=\delta(\alpha_{R_1},\alpha_{R_2})$ still holds.  Here,
``$\le$'' is still true, but ``$\ge$'' may fail, even for matrices.
One can only show
$\dUni(R_1,R_2) \ge c^{-1} \delta(\alpha_{R_1},\alpha_{R_2})$ for some
$c \in (2.903,2.904)$, see~\cite[\S~33]{bhatia:07} and~\cite[end of
Sec.~2]{holbrook:92}.  Davidson~\cite{davidson:86} generalised this
inequality and also Azoff's and Davis' result to normal operators on
general (not necessarily separable) Hilbert spaces with the same
constant $c$ if discrete spectrum is present (if both operators are
purely essential, one can choose $c=1$, i.e.\ one has again equality
as in the self-adjoint case).  Generalisations to von Neumann and
$C^*$-algebras can be found
in~\cite{hiai-nakamura:89,sherman:07,hu-lin:15,jsv:21}, see also references
therein.  The latter focuses on viewing the \LevyProkhorov distance
as a Wasserstein distance used in optimal transport.

Works on comparing operators acting in different Hilbert spaces deal
mostly with some generalisation of strong resolvent convergence, see
the literature cited in~\cite{post-zimmer:22} and also
in~\cite{boegli:17}.  The concept of isometric distance is based on a
related convergence notion of Weidmann~\cite[Sec.~9.3]{weidmann:00}.
The concept of quasi-unitary distance was first introduced
in~\cite[Sec.~2.4]{post-simmer:20}.  The present article can be seen
as an affirmative answer to the open question
in~\cite[Sec.~1.3]{post-simmer:20} of how to express operator norm
convergence of operators acting in different spaces in metric terms
and show equivalence with spectral convergence (see \Thm{main-5}).

Our concept needs quite low regularity.  If one has more information
on the dependence of the family of Hilbert spaces on a parameter, one
might use asymptotic expansions as for example
in~\cite{panasenko-pileckas:15}.

\subsection{Outlook}
In a future work~\cite{post-zimmer:pre24b} (see
also~\cite{zimmer:24}), we will address the question of eigenvalue
enumeration for compact and Schatten class operators and relate the
optimal matching distance of the eigenvalues with some unitary and
isometric operator norm distances.  Moreover,
in~\cite{post-zimmer:pre24c} we will give more details on the quasi
unitary distance and variants of it.  It is still unclear to us
whether $\dQUE$ (or some reasonable variant of it) fulfils the
triangle inequality; our results only imply that it is fulfilled with
a constant between $\sqrt 2$ and $\sqrt 3$ by \Lem{dque.metric} and
\Cor{at.least.sqrt2}.  Also, the question whether the constant
$\sqrt 3$ in \Thm{main-3} is sharp remains open.

Some of our results extend to normal operators, namely \Thm{main-2}
should still hold.  An extension to general operators would need a
completely new proof avoiding crude multiplicity functions, but a
direct proof of $\dUni \le \dIso$ using~\eqref{eq:cond.ess.spec}.

Another continuation of our work would be to work with Banach spaces
and define convergence and distance concepts similarly as the unitary,
isometric and quasi-unitary notion.

\subsection{Structure of the article}
In \Sec{spec.cmf} we fix some notation and explain in detail the
concept of crude multiplicity functions and measures as well as some
distances of \LevyProkhorov type between them independently from their
appearance from operators.  Some of the results appeared already
in~\cite{azoff-davis:84,davidson:86}, but the authors use slightly
different definitions of the \LevyProkhorov distance, so we give a
self-contained presentation here (taking the crude multiplicity
function as a basic object itself).  In \Sec{duni.cmf} we briefly
review the results of~\cite{azoff-davis:84} on the equality of the
unitary distance with the \LevyProkhorov distance.  Moreover,
\Thm{main-1} is proven here.  In \Sec{diso.duni} we prove some facts
about the isometric distance and relate it with the unitary distance;
we prove \Thm{main-2} here.  \Sec{dque} provides some facts about the
quasi-unitary distance and its relation with the isometric distance,
namely the proof of \Thm{main-3}.  \Sec{examples} contains various
examples and counterexamples in order to show that the assumptions in
the main theorems cannot be weakened.  Finally, \Sec{conv} is about
related convergence notions of the distances; here, we relate our
results with the ones already shown in our previous
article~\cite{post-zimmer:22}.

\subsection{Acknowledgements}
We would like to thank our colleagues Leonhard Frerick, Fernando
Lled\'o and Jochen Wengenroth for useful discussions concerning this
article.

%
\section{Spectra and crude multiplicity functions}
\label{sec:spec.cmf}
%

In this section, we fix some notation and collect facts used later for
our arguments; namely the concept of Hausdorff distance, crude
multiplicity functions, unitary distance and its relation with the
spectrum.  We start with some general notation.

Denote by $2^\R$ the power set of $\R$, i.e.\ the set of subsets of
$\R$, let $\Top(\R) \subset 2^\R$ be the set of open subsets of $\R$,
let $\OpInt \subset 2^\R$ be the set of open intervals and
$\Borel(\R)$ be the set of all measurable subsets of $\R$, i.e., the
Borel-$\sigma$-algebra generated by $\Top(\R)$.

\subsection{Operators in a Hilbert space}
\label{sec:spec.thm}

All Hilbert spaces considered here such as $\HS_1$ and $\HS_2$ are
assumed to be \emph{separable} (including sometimes also the
finite-dimensional case).  We consider two operators
$\map{R_1}{\HS_1}{\HS_1}$ and $\map{R_2}{\HS_2}{\HS_2}$.  We denote by
$\Lin{\HS_1,\HS_2}$ the space of all bounded operators equipped with
the operator norm $\norm[\Lin{\HS_1,\HS_2}]\cdot$.  If
$\HS=\HS_1=\HS_2$ we write briefly $\Lin \HS$ instead of
$\Lin{\HS,\HS}$.  By $\Unitary{\HS_1,\HS_2}$ we denote the set of
unitary operators $\map U{\HS_1}{\HS_2}$.  If $\HS=\HS_1=\HS_2$ we
write briefly $\Unitary \HS$ for the group of unitary operators
instead of $\Unitary{\HS,\HS}$.

Elements $\lambda \in \C$ are in the \emph{spectrum of $R$}
($\lambda \in \spec R$) if and only if $\map{R-\lambda}\HS \HS$ is not
bijective.  The \emph{discrete} spectrum is defined by
\begin{align*}
  \spec[disc] R
  := \set{\lambda \in \C}%
  {\text{$1 \le \dim \ker(R-\lambda)<\infty$, $\lambda$ is isolated in $\spec R$}},
\end{align*}
i.e., it is the set of all eigenvalues of finite multiplicity, which
are isolated in the spectrum.  The \emph{essential} spectrum is the
remaining part of $\spec R$, i.e.,
$\spec[ess] R = \spec R \setminus \spec[disc] R$, and it is a closed
set, as well as $\spec R$.  The spectrum is a unitary invariant, i.e.,
\begin{equation}
  \label{eq:spec.uni.eq}
  \spec{R_1}=\spec{UR_1U^*}
\end{equation}
for any unitary map $\map U{\HS_1}{\HS_2}$ (note that
$\map{R_1-z}{\HS_1}{\HS_1}$ is bijective if and only if
$\map{U(R_1-z)U^*=U R_1U^*-z}{\HS_2}{\HS_2}$ is bijective).  The same
is true for the discrete and essential spectrum.  For simplicity, we
mostly consider only self-adjoint operators here, some results extend
to normal or even non-self-adjoint operators.

Denote by $\Proj \HS$ the set of orthogonal projections in $\HS$.  The
\emph{spectral measure} associated with a self-adjoint operator $R$ on
$\HS$ is the map $\map{\1_{(\cdot)}(R)}{\Borel(\R)}{\Proj \HS}$, i.e.\
$\1_B(R)$ denotes the spectral projection of $R$ onto
$B \in \Borel(\R)$.  Moreover, $\lambda \in \spec R$ if and only if
$\lambda$ is in the support of this projection-valued measure, i.e.,
if
\begin{equation}
  \label{eq:spec.char}
  \forall \eps>0 \colon \1_{(\lambda-\eps,\lambda+\eps)}(R) \ne 0.
\end{equation}

\subsection{The Hausdorff distance of arbitrary sets in metric spaces}
\label{ssec:gh}
The Hausdorff distance of two subsets $A_1,A_2 \subset M$ in a given
metric space $(M,d)$ is defined as
\begin{equation}
  \label{eq:hausdorff}
  \dHaus(A_1,A_2)
  := \max \bigl\{ \dHauS(A_1,A_2),\dHauS(A_2,A_1)\bigr\},
\end{equation}
where
\begin{equation}
  \label{eq:hausdorff0}
  \dHauS(A_1,A_2)
  := \sup_{x_1 \in A_1}d(x_1,A_2)
  \quadtext{and}
  \dHaus(x,A) := \inf_{a \in A} d(x,a).
\end{equation}
Equivalently, we have the representation
\begin{equation}
  \label{eq:eq.hausdorff}
  \dHaus(A_1,A_2) =
  \inf \set{\eps>0}{A_1 \subset \closedBall_\eps(A_2) \text{ and }
    A_2 \subset \closedBall_\eps(A_1)}
\end{equation}
for the Hausdorff distance, where
\begin{equation}
  \label{eq:nbd.ball}
  \closedBall_\eps(A):=\bigcup_{a \in A}\closedBall_\eps(a)
  \qquadtext{and}
  \closedBall_\eps(a):=\set{x \in M}{d(x,a)\le \eps}.
\end{equation}

\subsection{Crude multiplicity functions}
\label{ssec:cmf}
We introduce a concept first used implicitly by Gellar and
Page~\cite{gellar-page:74} and then introduced under the name
\emph{crude multiplicity function} by Azoff and
Davis~\cite{azoff-davis:84}.

The main and name-giving example comes from the spectral
characterisation of the discrete and essential spectrum of a
self-adjoint operator in terms of its crude multiplicity function, see
\Prp{char.cmf}.  The label ``crude'' refers to the fact that if the
distance between two crude multiplicity functions defined in
\Def{dist.cmf} is $0$, then the associated operators are not
necessarily unitarily equivalent, only the essential and discrete
spectra (including multiplicities) agree.  We comment on this fact in
\Subsec{duni.zero}.

Here, we first define crude multiplicity functions abstractly without
referring to operators; using an equivalent characterisation
in~\cite[Prop.~2.8]{azoff-davis:84}.

Let
\begin{align*}
  \Nbar := \N \cup \{\infty\}
  \qquadtext{and}
  \NbarO:=\{0\} \cup \Nbar = \N_0 \cup \{\infty\}.
\end{align*}

\begin{definition}[crude multiplicity function]
  \label{def:cmf}
  \indent %
  A map $\map \alpha \R \NbarO$ is called a \emph{crude multiplicity
    function} if
  \begin{enumerate}
  \item
    \label{def.cmf.a}
    $\alpha$ is upper semi-continuous, i.e., for all
    $\lambda_0 \in \R$ we have
    $\limsup_{\lambda \to \lambda_0}
    \alpha(\lambda)=\alpha(\lambda_0)$;
  \item
    \label{def.cmf.b}
    each $\lambda \in \alpha^{-1}(\N)$ is isolated in $\supp \alpha$,
    i.e., for all $\lambda \in \alpha^{-1}(\N)$ there is
    $\eps_\lambda>0$ such that
    $\openBall_{\eps_\lambda}(\lambda) \cap \supp \alpha=\{\lambda\}$.
  \end{enumerate}
\end{definition}

\begin{remark}
  We can replace~\itemref{def.cmf.a} by
  \begin{enumerate}
  \item[(\ref{def.cmf.a}')]
    \label{def.cmf.a'}
    $\alpha^{-1}(\{\infty\})$ is closed in $\R$.
  \end{enumerate}
  It follows that the \emph{support}
  $\supp \alpha:=\set{\lambda \in \R}{\alpha(\lambda)>0}$ of $\alpha$
  is closed in $\R$.
\end{remark}

We give an important characterisation for crude multiplicity functions
in terms of a bounded and self-adjoint operator in \Prp{char.cmf}.

\begin{definition}[discrete and essential support]
  \label{def:ess.cmf}
  Let $\map \alpha \R \NbarO$ be a crude multiplicity function.
  \begin{enumerate}
  \item We call
    \begin{align*}
      \dissupp \alpha := \alpha^{-1}(\N)
      \quadtext{and}
      \esssupp \alpha := \alpha^{-1}(\{\infty\})
    \end{align*}
    the \emph{discrete} and \emph{essential} support\footnote{The
      notion \emph{essential support} here should not be mixed up with
      the same notion for functions on a measure space.} of $\alpha$,
    respectively.
  \item We say that $\alpha$ is \emph{purely discrete} or \emph{purely
      essential} if $\supp \alpha = \dissupp \alpha$ or
    $\supp \alpha=\esssupp \alpha$, respectively.
  \end{enumerate}
\end{definition}
Hence, a function $\map \alpha \R \NbarO$ is a purely discrete resp.\
essential crude multiplicity function if it takes values only in $\N_0$
resp.\ $\{0,\infty\}$, and if $\supp \alpha =\alpha^{-1}(\{\infty\})$
is discrete resp.\ closed in $\R$.

We now show how we can construct a unique measure from the crude
multiplicity function.  In particular, we can either use the crude
multiplicity function or the corresponding measure.  The proof of the
next lemma is inspired by~\cite[Sec.~2]{azoff-davis:84}):
\begin{proposition}[measure associated with crude multiplicity
  function]
  \label{prp:cmf}
  For any crude multiplicity function $\alpha$ there is a unique
  countably additive measure $\map{\alpha^*}{\Borel(R)}\NbarO$ such
  that
  \begin{equation}
    \label{eq:cmm.char}
    \map{\alpha^*}{\Borel(\R)}\NbarO,\qquad
    B \mapsto \alpha^*(B)
    := \inf \set{ \alpha^*(U)}{U \in \Top(\R), B \subseteq U},
  \end{equation}
  called the \emph{crude multiplicity measure} associated with
  $\alpha$.  In particular, we have
  $\alpha(\lambda)=\alpha^*(\{\lambda\})$.
\end{proposition}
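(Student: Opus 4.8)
The plan is to build $\alpha^*$ first on open sets, where there is an obvious candidate, and then to use the displayed outer-regularity formula to extend it to all Borel sets; the real work is to verify that this extension is a genuine countably additive $\NbarO$-valued measure and that it is the only one. First I would record the structural consequences of \Def{cmf}: the support $\supp \alpha$ and the essential support $\esssupp \alpha = \alpha^{-1}(\{\infty\})$ are closed, while the discrete support $\dissupp \alpha = \alpha^{-1}(\N)$ consists of points isolated in $\supp \alpha$. Since a set of isolated points of a separable metric space is at most countable, $\dissupp \alpha$ is countable, and a discrete point can accumulate only against $\esssupp \alpha$. For an open $U \subseteq \R$ I would set $\alpha^*(U) := \sum_{\lambda \in U} \alpha(\lambda)$ (the supremum of the finite subsums), which is $\infty$ as soon as $U$ meets $\esssupp \alpha$ or carries infinite discrete mass, and is monotone in $U$. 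Extending by $\alpha^*(B) := \inf \bigset{\alpha^*(U)}{U \in \Top(\R),\ B \subseteq U}$ is then consistent on open sets (take $U = U_0$ and use monotonicity) and monotone on $\Borel(\R)$, and $\alpha^*(\{\lambda\}) = \alpha(\lambda)$ follows by shrinking neighbourhoods: isolation gives a ball meeting $\supp \alpha$ only in a discrete point, closedness of $\R \setminus \supp \alpha$ gives $0$ off the support, and any neighbourhood of a point of $\esssupp \alpha$ forces $\infty$.

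The heart of the existence part is countable additivity. Here I would prove the regularity lemma that makes the outer formula computable: \emph{if $\alpha^*(B) < \infty$ then there is an open $U \supseteq B$ with $\alpha^*(U) = \alpha^*(B)$.} Indeed, finiteness forces $B \cap \esssupp \alpha = \emptyset$ and $B \cap \dissupp \alpha$ finite, so each $x \in B$ lies either in $\dissupp \alpha$ or in the open set $\R \setminus \supp \alpha$; picking a ball $\openBall_{r_x}(x)$ of radius small enough (using isolation at discrete points and $\dist(x, \supp \alpha) > 0$ off the support) gives $\openBall_{r_x}(x) \cap \supp \alpha \subseteq B \cap \dissupp \alpha$, and the union $U$ of these balls satisfies $U \cap \supp \alpha = B \cap \dissupp \alpha$, whence $\alpha^*(U) = \alpha^*(B)$. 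I would then identify the object explicitly as
\[
  \alpha^* = \mu_{\mathrm d} + \nu, \qquad \mu_{\mathrm d} := \sum_{\lambda \in \dissupp \alpha} \alpha(\lambda)\,\delta_\lambda,
\]
where $\nu(B) := \infty$ if $B \cap \esssupp \alpha \neq \emptyset$ and $\nu(B) := 0$ otherwise. A direct check shows $\nu$ is countably additive with values in $\{0, \infty\}$ (a countable union meets $\esssupp \alpha$ iff some member does), so $\mu_{\mathrm d} + \nu$ is a countably additive $\NbarO$-valued measure; the regularity lemma (for finite-mass sets) together with monotonicity (for infinite-mass sets, where any containing open set is again infinite) shows it coincides with the outer-regularity extension on all of $\Borel(\R)$.

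For uniqueness, suppose $\alpha^*$ is any countably additive $\NbarO$-valued measure satisfying the formula with $\alpha^*(\{\lambda\}) = \alpha(\lambda)$. The formula determines $\alpha^*$ on $\Borel(\R)$ from its open-set values, and an open set is a countable disjoint union of open intervals, so by $\sigma$-additivity it suffices to fix $\alpha^*$ on each open interval $I$. If $I \cap \esssupp \alpha \neq \emptyset$, monotonicity against an infinite atom gives $\alpha^*(I) = \infty$; otherwise $I \cap \supp \alpha = I \cap \dissupp \alpha$ is countable, $\sigma$-additivity over these atoms yields $\alpha^*(I) \ge \sum_{\lambda \in I \cap \dissupp \alpha} \alpha(\lambda)$, and for the reverse I would write $I = (I \cap \dissupp \alpha) \cupdot W$ with $W := I \setminus \supp \alpha$ open and show $\alpha^*(W) = 0$. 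Each $x \in W$ has $\alpha^*(\{x\}) = \alpha(x) = 0$, so by the formula (and because the values lie in $\NbarO$) there is an open $U_x \subseteq W$ with $x \in U_x$ and $\alpha^*(U_x) = 0$; extracting a countable Lindelöf subcover of $W$ and using countable subadditivity forces $\alpha^*(W) = 0$. The step I expect to be the main obstacle is precisely this exclusion of ``phantom'' mass carried by an atomless part of the measure off $\supp \alpha$: a countably additive $\NbarO$-valued measure can a priori be atomless and nonzero (e.g.\ ``$\infty$ on uncountable sets''), and ruling this out genuinely uses both the $\NbarO$-valued nature and the outer-regularity normalisation through the Lindelöf covering, rather than $\sigma$-additivity alone.
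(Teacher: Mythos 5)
Your proof is correct, and it takes a noticeably different (and more careful) route than the paper's. The paper defines $\alpha^*(U):=\sum_{\lambda\in U}\alpha(\lambda)$ on open sets, asserts countable additivity there, and then extends to $\Borel(\R)$ by appealing to the fact that $\Top(\R)$ generates the Borel $\sigma$-algebra, before verifying $\alpha^*(\{\lambda\})=\alpha(\lambda)$ by the same case analysis on $\alpha(\lambda)\in\{0\}\cup\N\cup\{\infty\}$ that you use. The weak point of that argument is exactly the one you flag at the end: when $\esssupp\alpha\neq\emptyset$ the measure is not $\sigma$-finite, so the standard uniqueness-of-extension theorem for measures agreeing on a generating $\pi$-system does not apply (two $\NbarO$-valued measures can agree on all open sets and still differ, e.g.\ counting measure versus the measure that is $\infty$ on every nonempty set). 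Your explicit decomposition $\alpha^*=\mu_{\mathrm d}+\nu$ settles existence and countable additivity by inspection, your regularity lemma shows the infimum in the defining formula is attained on finite-mass sets (this is essentially \Prpenum{cmf2}{cmf.a} of the paper, proved there by the well-ordering of $\NbarO$), and your Lindel\"of argument closes the uniqueness gap by using the outer-regularity normalisation together with $\beta(\{x\})=0$ off $\supp\alpha$ to exclude atomless mass. What the paper's version buys is brevity and a construction that transparently matches the later development in \Prp{cmf2}; what yours buys is an actually complete uniqueness proof and an explicit formula for $\alpha^*$ that makes \Prpenums{cmf2}{cmf.b}{cmf.d} immediate.
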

\begin{proof}
  We define $\alpha^*$ first on open subsets $U$ of $\R$.  We set
  \begin{align*}
    \alpha^*(U):= \sum_{\lambda \in U}\alpha(\lambda),
  \end{align*}
  where the sum is finite if and only if $U \cap \dissupp \alpha$ is
  finite and $U \cap \esssupp \alpha=\emptyset$.  It is straightforward
  to see that $\alpha^*$ is countably additive.  As the set of open
  subsets $\Top(\R)$ generates the Borel $\sigma$-algebra
  $\Borel(\R)$, we can extend $\alpha^*$ uniquely to $\Borel(\R)$.
  As $\alpha^*(\openBall_\eps(\lambda))$ is decreasing for the open ball
  $\openBall_\eps(\lambda)=(\lambda-\eps,\lambda+\eps)$ if $\eps \searrow 0$
  the measure of the set $\{\lambda\}$ can be calculated as
  \begin{align*}
    \alpha^*(\{\lambda\})
    = \lim_{\eps \to 0} \alpha^*(\openBall_\eps(\lambda)).
  \end{align*}
  We have to show that this limit equals the originally given value
  $\alpha(\lambda)$ for all $\lambda \in \R$: Let $n=\alpha(\lambda)$.
  \begin{itemize}
  \item If $n=0$, then $\alpha^*(\openBall_\eps(\lambda))=0$ for some $\eps>0$
    small enough; this follows from the very definition of $\alpha^*$
    and the fact that $\R \setminus \alpha^{-1}(\Nbar)$ is open.  In
    particular, $\alpha^*(\{\lambda\})=\alpha(\lambda)=0$.
  \item If $n \in \N$ then
    $\alpha^*(\openBall_\eps(\lambda))\ge \alpha(\lambda)$, and by the
    discreteness of $\alpha^{-1}(\{n\})$, we have equality for $\eps$
    small enough.
  \item If $n=\infty$ then
    $\infty=\alpha(\lambda)\le \alpha^*(\openBall_\eps(\lambda)) \le
    \infty$ for all $\eps>0$, hence also the desired
    equality. \qedhere
  \end{itemize}
\end{proof}


We need the measure associated from the crude multiplicity function
later when defining a distance between two crude multiplicity
functions, see \Def{dist.cmf}.

We now define the \emph{outer crude multiplicity measure}
$\wt \alpha^*$ associated with $\alpha^*$ for any subset $S \in 2^\R$
by
\begin{equation}
  \label{eq:ocmm.char}
  \map{\wt \alpha^*}{2^\R}\NbarO,\qquad
  S \mapsto \wt \alpha^* (S)
  := \inf \set{ \alpha^*(U)}{U \in \Top(\R), S \subseteq U},
\end{equation}
We collect some facts for the crude multiplicity measure, following
again~\cite{azoff-davis:84}:
\begin{proposition}
  \label{prp:cmf2}
  Let $\alpha$ be a crude multiplicity function and $\alpha^*$ its
  associated measure.  Then we have:
  \begin{enumerate}
  \item
    \label{cmf.a} For any $S \in 2^\R$ there is an open set
    $U_S \in \Top$ with $S \subseteq U_S$ and
    $\alpha^*(S)=\alpha^*(U_S)$.
  \item
    \label{cmf.b}
    For any disjoint compact subsets $K$ and $L$ of $\R$ there are
    disjoint open subsets $U_K$ and $U_L$ with $K \subset U_K$ and
    $L \subset U_L$ such that
    \begin{align*}
      \alpha^*(K \dcup L)
      =\alpha^*(U_K \dcup U_L)
      =\alpha^*(U_K)+\alpha^*(U_L)
      =\alpha^*(K)+\alpha^*(L)
    \end{align*}
  \item
    \label{cmf.c}
    For any finite subset $F \in 2^\R$ we have
    \begin{align*}
      \alpha^*(F)=\sum_{\lambda \in F}\alpha(\lambda)
    \end{align*}
  \item
    \label{cmf.d}
    The outer crude multiplicity measure $\wt \alpha^*$ is inner regular
    with respect to the discrete topology of $\R$, i.e.,
    \begin{align*}
      \wt \alpha^*(S)
      = \sup \set{\alpha^*(F)}{\text{$F$ finite subset of $S$}}
    \end{align*}
    for subsets $S \in 2^\R$, where the outer crude multiplicity
    measure $\wt \alpha^*$ associated with $\alpha^*$ is defined as
    in~\eqref{eq:cmm.char} (with $S \in 2^\R$ instead of
    $B \in \Borel(\R)$).

  \item
    \label{cmf.e}
    The outer crude multiplicity measure $\wt \alpha^*$ is already a
    measure on $2^\R$ and in particular countably additive.
  \end{enumerate}
\end{proposition}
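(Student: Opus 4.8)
The plan is to deduce all five assertions from a single structural identity: for every $S\in 2^\R$ the outer measure is the weighted counting measure
\[
  \wt\alpha^*(S)=\sum_{\lambda\in S}\alpha(\lambda),
\]
where the sum of the values $\alpha(\lambda)\in\NbarO$ is read as the supremum of its finite partial sums. First I would collect what \Prp{cmf} already gives: $\alpha^*(U)=\sum_{\lambda\in U}\alpha(\lambda)$ for open $U$, and $\alpha^*(\{\lambda\})=\alpha(\lambda)$ for points. Splitting an arbitrary Borel set $B$ into $B\cap\dissupp\alpha$ (countable, so handled by countable additivity), $B\cap\esssupp\alpha$ (contributing $\infty$ to both sides as soon as it is nonempty) and $B\setminus\supp\alpha$ (null), this upgrades to $\alpha^*(B)=\sum_{\lambda\in B}\alpha(\lambda)$ for all Borel $B$; for a finite set this is exactly part~(c). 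Part~(a) I would settle separately and first, using only that $\NbarO$ is well-ordered: the nonempty set $\{\alpha^*(U)\mid U\in\Top(\R),\ S\subseteq U\}$ then has a least element, and any $U_S$ attaining it realises $\alpha^*(S)=\alpha^*(U_S)$ (reading $\alpha^*(S)$ as the outer value $\wt\alpha^*(S)$ when $S$ is not Borel).

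For the identity itself, the inequality $\wt\alpha^*(S)\ge\sum_{\lambda\in S}\alpha(\lambda)$ is just monotonicity: any open $U\supseteq S$ contains every finite $F\subseteq S$, so $\alpha^*(U)\ge\alpha^*(F)=\sum_{\lambda\in F}\alpha(\lambda)$, and one takes the infimum over $U$ and the supremum over $F$. The reverse inequality is trivial when the right-hand side is $\infty$, so the only real work is the case $\sum_{\lambda\in S}\alpha(\lambda)=s<\infty$. Then $S\cap\esssupp\alpha=\emptyset$ and $S\cap\dissupp\alpha=\{\lambda_1,\dots,\lambda_j\}$ is finite, since each value there is at least $1$. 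The crux is to produce an open $U\supseteq S$ capturing exactly these support points: I would set $T:=\supp\alpha\setminus\{\lambda_1,\dots,\lambda_j\}$ and check that $\overline T\cap S=\emptyset$, because any common point would lie in $S\cap\supp\alpha=\{\lambda_i\}$, and each $\lambda_i$, being isolated in $\supp\alpha$, cannot lie in $\overline T$. Hence $U:=\R\setminus\overline T$ is open, contains $S$, satisfies $U\cap\supp\alpha=\{\lambda_1,\dots,\lambda_j\}$, and so $\alpha^*(U)=s$, giving $\wt\alpha^*(S)\le s$. This passage from the finitely many unavoidable support points to an open set meeting the support in precisely those points is where closedness of $\supp\alpha$ and isolation of the discrete-support points enter, and I expect it to be the main obstacle.

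Granted the identity, part~(d) is a rewriting: $\alpha^*(F)=\sum_{\lambda\in F}\alpha(\lambda)$ by part~(c), and $\sup\{\sum_{\lambda\in F}\alpha(\lambda)\mid F\subseteq S\text{ finite}\}=\sum_{\lambda\in S}\alpha(\lambda)=\wt\alpha^*(S)$ by the definition of the unordered sum. Part~(e) then follows because the map $S\mapsto\sum_{\lambda\in S}\alpha(\lambda)$ is visibly countably additive, a nonnegative family being freely regroupable over a countable disjoint union; together with $\wt\alpha^*(\emptyset)=0$ this makes $\wt\alpha^*$ a measure on $2^\R$.

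Finally part~(b). Since $K$ and $L$ are disjoint compacta, $\dist(K,L)>0$, and I would start from disjoint open neighbourhoods $V_K\supseteq K$, $V_L\supseteq L$. If $\alpha^*(K\dcup L)=\infty$ then one piece already has infinite measure and all four quantities in the claim are $\infty$, so these $V_K,V_L$ suffice. Otherwise $K$ and $L$ each meet $\supp\alpha$ in finitely many isolated points and avoid $\esssupp\alpha$; arguing exactly as for the identity — now using that $\overline{\supp\alpha\setminus K}$ is disjoint from $K$ — I would shrink the neighbourhoods to open $U_K\subseteq V_K$ and $U_L\subseteq V_L$ with $U_K\cap\supp\alpha=K\cap\supp\alpha$ and $U_L\cap\supp\alpha=L\cap\supp\alpha$. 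These $U_K,U_L$ are disjoint, satisfy $\alpha^*(U_K)=\alpha^*(K)$ and $\alpha^*(U_L)=\alpha^*(L)$, and finite additivity over disjoint Borel sets yields the full chain $\alpha^*(K\dcup L)=\alpha^*(U_K\dcup U_L)=\alpha^*(U_K)+\alpha^*(U_L)=\alpha^*(K)+\alpha^*(L)$.
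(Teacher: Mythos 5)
Your proof is correct, and it reorganises the argument around a single structural fact that the paper never states explicitly: $\wt \alpha^*(S)=\sum_{\lambda\in S}\alpha(\lambda)$ for \emph{every} $S \in 2^\R$, from which \itemref{cmf.b}--\itemref{cmf.e} all fall out. The paper instead proves the items in sequence and quite tersely: \itemref{cmf.a} by well-ordering of $\NbarO$ (identical to yours), \itemref{cmf.b} by the one-line remark that disjoint compacta have disjoint open neighbourhoods, \itemref{cmf.c} as a consequence of \itemref{cmf.b}, \itemref{cmf.d} by a case split on whether $\alpha^*(S)$ is finite, and \itemref{cmf.e} via the Carath\'eodory measurability criterion applied with the inner regularity from \itemref{cmf.d}. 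Your route buys two things. First, the step the paper merely asserts in its proof of \itemref{cmf.d} --- that $\alpha^*(S)=\alpha^*(F)$ with $F=\dissupp\alpha\cap S$ when $\alpha^*(S)<\infty$ --- is exactly the nontrivial inequality $\wt\alpha^*(S)\le\sum_{\lambda\in S}\alpha(\lambda)$, and you actually prove it: the construction $U=\R\setminus\clo{T}$ with $T=\supp\alpha\setminus\{\lambda_1,\dots,\lambda_j\}$, justified by closedness of $\supp\alpha$ and the isolation of points of $\dissupp\alpha$ (\Defenum{cmf}{def.cmf.b}), is the missing content and is sound. Second, your proof of \itemref{cmf.e} is more elementary: once $\wt\alpha^*$ is identified as a weighted counting measure, countable additivity on all of $2^\R$ is immediate by regrouping a nonnegative family, with no appeal to the Carath\'eodory extension machinery. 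The only mild caution is an apparent circularity in how you first motivate \itemref{cmf.c} (via countable additivity of $\alpha^*$ on Borel sets) versus deriving it from the master identity; since the identity itself only uses \Prp{cmf} on open sets and singletons, the clean logical order is: \itemref{cmf.a}, then the identity, then \itemref{cmf.c}, \itemref{cmf.d}, \itemref{cmf.e}, \itemref{cmf.b} --- which is how you in fact proceed.
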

Because of \Prpenum{cmf2}{cmf.e} we write again $\alpha^*$ instead of
$\wt \alpha^*$ for the (outer) measure defined on all subsets of $\R$.
\begin{proof}
  \itemref{cmf.a}~The infimum $\alpha^*(S)$ is attained as any subset of
  the cardinals $\NbarO$ has a minimum.  \itemref{cmf.b}~follows
  from~\itemref{cmf.a} and that any two disjoint compact subsets have
  disjoint open neighbourhoods.  \itemref{cmf.c}~follows
  from~\itemref{cmf.b}.

  \itemref{cmf.d}~A set is compact with respect to the discrete
  topology if and only if it is finite.  If $\alpha ^*(S)<\infty$,
  then $F:=\dissupp \alpha \cap S$ is finite and
  $\alpha^*(S)=\alpha^*(F)$.  If $\alpha^*(S)=\infty$, then either
  there is $\lambda \in S$ with $\alpha(\lambda)=\infty$; in this case
  choose $F=\{\lambda\}$; or there is a sequence
  $\lambda_n \to \lambda_\infty$ with $\lambda_n \in S$ and
  $\alpha(\lambda_n)\ge 1$.  Now, for each $n \in \N$ set
  $F_n:=\{\lambda_1,\dots,\lambda_n\}$ then
  $\alpha^*(F) =\sum_{k=1}^n\alpha(\lambda_n)\ge n$.  In particular,
  $\alpha^*(F_n) \nearrow \infty=\alpha^*(S)$ and we are done.

  \itemref{cmf.e}~A set $B \in 2^\R$ is $\wt \alpha^*$-measurable if
  and only if
  \begin{align*}
    \wt \alpha^*(S)
    =\wt \alpha^*(S\cap B) + \wt \alpha^*(S \cap \compl B)
  \end{align*}
  for all $S \in 2^\R$.
  %
  By~\itemref{cmf.d}, we have
  \begin{align*}
    \wt \alpha^*(S\cap B) + \wt \alpha^*(S \cap \compl B)
    &=\sup\set{\alpha^*(F')}{F' \subset S \cap B}
      +\sup\set{\alpha^*(F'')}{F'' \subset S \cap \compl B}\\
    &=\sup\set{\alpha^*(F')+\alpha(F'')}{F' \subset S \cap B,
      F'' \subset S \cap \compl B}\\
    &=\sup\set{\alpha^*(F)}{F \subset S}
     =\wt \alpha ^*(S)
  \end{align*}
  for all $S \in 2^\R$, where $F'$, $F''$ and $F$ are finite sets,
  hence all sets $B \in 2^\R$ are $\wt \alpha^*$-measurable.  The
  statement that $\wt \alpha^*$ is a measure on $2^\R$ follows from
  the fact that an outer measure $\wt \alpha^*$ associated with a
  measure $\alpha^*$ on a $\sigma$-algebra extends uniquely to all
  $\wt \alpha^*$-measurable sets of the outer measure $\wt \alpha^*$.
\end{proof}

\begin{proposition}
  \label{prp:char.cmf}
  We have the following main example of a crude multiplicity
  functions:
  \begin{enumerate}
  \item
    \label{char.cmf.a}
    Let $R$ be a bounded and self-adjoint operator in a separable
    Hilbert space then
    \begin{align*}
      \alpha_R(\lambda)
      := \lim_{\eps \to 0} \rank \1_{\openBall_\eps(\lambda)}(R)
    \end{align*}
    is a crude multiplicity function with compact support.  We call
    $\alpha_R$ the \emph{crude multiplicity function associated with
      $R$}.

  \item
    \label{char.cmf.b}
    For the spectrum of $R$ and its associated crude multiplicity
    function $\alpha=\alpha_R$, we have:
    \begin{enumerate}
    \item
      \label{cmf.spec.a}
      $\spec R = \supp \alpha \mathrel{(=} \alpha^{-1}(\Nbar))$,
    \item
      \label{cmf.spec.b}
      $\disspec R = \dissupp \alpha \mathrel{(=} \alpha^{-1}(\N))$,
    \item
      \label{cmf.spec.c}
      $\essspec R = \esssupp \alpha \mathrel{(=}
      \alpha^{-1}(\{\infty\}))$.
    \end{enumerate}
  \item
    \label{char.cmf.c}
    If $\alpha_R$ is the crude multiplicity function of a bounded
    operator $R$, then its associated measure $\alpha^*_R$ (cf.\
    \Prp{cmf}) fulfils
    \begin{align*}
      \alpha^*_R(I)=\rank \dim \1_I(R)
    \end{align*}
    for any open subset $I \subset \R$.
  \end{enumerate}
\end{proposition}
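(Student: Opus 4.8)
The plan is to treat the three assertions in turn, using throughout that $\eps\mapsto\rank\1_{\openBall_\eps(\lambda)}(R)$ is non-increasing as $\eps\searrow 0$ (nested intervals give nested spectral projections), so the limit defining $\alpha_R(\lambda)$ exists in $\NbarO$. For \itemref{char.cmf.a} I would first check upper semi-continuity: given $\lambda_0\in\R$ and $\eps>0$, any $\lambda$ with $\abs{\lambda-\lambda_0}<\eps/2$ satisfies $\openBall_\delta(\lambda)\subset\openBall_\eps(\lambda_0)$ for small $\delta$, whence $\alpha_R(\lambda)\le\rank\1_{\openBall_\eps(\lambda_0)}(R)$; taking $\limsup_{\lambda\to\lambda_0}$ and then $\eps\to0$ gives $\limsup_{\lambda\to\lambda_0}\alpha_R(\lambda)\le\alpha_R(\lambda_0)$, the reverse inequality being trivial. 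For condition \itemref{def.cmf.b}, if $\alpha_R(\lambda_0)=n\in\N$ then by monotonicity and finiteness the rank stabilises, i.e.\ $\rank\1_{\openBall_\eps(\lambda_0)}(R)=n<\infty$ for small $\eps$; a finite-rank spectral projection onto $\openBall_\eps(\lambda_0)$ forces $R$ to have only finitely many spectral points there (eigenvalues of finite multiplicity summing to $n$), so shrinking $\eps$ isolates $\lambda_0$ in $\supp\alpha_R$. Finally $\alpha_R(\lambda)>0$ holds exactly when $\1_{\openBall_\eps(\lambda)}(R)\ne0$ for all $\eps$, i.e.\ by~\eqref{eq:spec.char} when $\lambda\in\spec R$; thus $\supp\alpha_R=\spec R$ is compact.

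Part \itemref{char.cmf.b} is then read off. The identity just noted is exactly \itemref{cmf.spec.a}, namely $\spec R=\supp\alpha_R=\alpha_R^{-1}(\Nbar)$. For \itemref{cmf.spec.b}, if $\lambda\in\disspec R$ is an isolated eigenvalue of finite multiplicity $m$, then $\1_{\openBall_\eps(\lambda)}(R)=\1_{\{\lambda\}}(R)$ for small $\eps$, so $\alpha_R(\lambda)=m\in\N$; conversely $\alpha_R(\lambda)\in\N$ forces, by the argument used for \itemref{def.cmf.b}, that $\lambda$ is an isolated eigenvalue of finite multiplicity. Hence $\disspec R=\alpha_R^{-1}(\N)=\dissupp\alpha_R$, and \itemref{cmf.spec.c} follows by complementing within $\spec R$, since $\essspec R=\spec R\setminus\disspec R$ and $\alpha_R^{-1}(\Nbar)\setminus\alpha_R^{-1}(\N)=\alpha_R^{-1}(\{\infty\})=\esssupp\alpha_R$.

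For \itemref{char.cmf.c}, recall from \Prp{cmf} that $\alpha^*_R(I)=\sum_{\lambda\in I}\alpha_R(\lambda)$ for open $I$, and split into two cases. If $I\cap\essspec R\ne\emptyset$, pick $\mu\in I\cap\essspec R$; then $\alpha_R(\mu)=\infty$ by \itemref{cmf.spec.c}, so the sum is $\infty$, while $\rank\1_{\openBall_\delta(\mu)}(R)=\infty$ for small $\delta$ together with $\openBall_\delta(\mu)\subset I$ gives $\rank\1_I(R)=\infty$ as well. If $I\cap\essspec R=\emptyset$, then $\spec R\cap I\subset\disspec R$ is a countable set $\{\mu_j\}_j$ of isolated eigenvalues of finite multiplicities $m_j=\alpha_R(\mu_j)$; since the spectral measure is concentrated on $\spec R$, countable additivity gives $\1_I(R)=\sum_j\1_{\{\mu_j\}}(R)$ with mutually orthogonal ranges, so $\rank\1_I(R)=\sum_j m_j=\sum_{\lambda\in I}\alpha_R(\lambda)=\alpha^*_R(I)$.

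Aside from the routine monotonicity and spectral-measure facts, the main point requiring care is the finite-rank argument in \itemref{char.cmf.a}: that $\alpha_R(\lambda_0)\in\N$ genuinely forces $\lambda_0$ to be an isolated eigenvalue, rather than an accumulation point of spectrum carrying finite total multiplicity — this is where stabilisation of $\rank\1_{\openBall_\eps(\lambda_0)}(R)$ to a finite value is essential, and it is reused in \itemref{cmf.spec.b}. The parallel subtlety in \itemref{char.cmf.c} is that for open $I$ the measure $\alpha^*_R$ must also account for purely continuous essential spectrum (contributing the value $\infty$), which is matched precisely by $\rank\1_I(R)=\infty$.
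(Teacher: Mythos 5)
Your proof is correct and follows essentially the same route as the paper's: identify $\supp\alpha_R$ with $\spec R$ via the spectral characterisation~\eqref{eq:spec.char}, use the eventual stabilisation of the integer-valued rank to show that finite values of $\alpha_R$ correspond to isolated eigenvalues of finite multiplicity, and verify the formula for $\alpha^*_R(I)$ by a case analysis. The only cosmetic differences are that you check upper semi-continuity directly where the paper instead verifies the equivalent condition that $\alpha_R^{-1}(\{\infty\})=\essspec R$ is closed, and in part~(\ref{char.cmf.c}) you split cases according to whether $I$ meets $\essspec R$ rather than whether $\alpha^*_R(I)$ is infinite; both variants cover the case of discrete eigenvalues accumulating at a boundary point of $I$.
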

\begin{proof}
  We prove~\itemref{char.cmf.a} and~\itemref{char.cmf.b} in one step:
  First, we use the characterisation that $\lambda \in \spec R$ if and
  only if for all $\eps>0$ we have
  $\1_{\openBall_\eps(\lambda)}(R)\ne 0$, but the latter is equivalent
  to $\alpha_R(\openBall_\eps(\lambda)) \ge 1$, or, taking the limit
  $\eps \to 0$, the latter is further equivalent to
  $\alpha_R(\lambda) \ge 1$, hence we have $\spec R=\supp {\alpha_R}$.

  As $\rank \1_{\openBall _\eps(\lambda)}(R)$ has values in $\Nbar_0$,
  the limit is eventually constant.  In particular, if
  $\alpha_R(\lambda) \in \N$, then there is $\eps_0>0$ such that
  $\rank \1_{\openBall_{\eps_0}(\lambda)\setminus \{\lambda\}}(R)=0$,
  i.e., $\lambda$ is isolated in $\alpha_R^{-1}(\N)$.  This shows
  \Defenum{cmf}{def.cmf.b} and also $\disspec R=\dissupp \alpha_R$.
  For \Defenum{cmf}{def.cmf.a}, note that
  \begin{align*}
    \alpha_R^{-1}(\{\infty\})
    =\esssupp \alpha_R
    =\supp \alpha_R \setminus \dissupp \alpha_R
    =\spec R \setminus \disspec R
    =\essspec R,
  \end{align*}
  and the latter set is closed.

  \itemref{char.cmf.c}~Let $I$ be an open interval, then, by
  definition,
  \begin{align*}
    \alpha ^*_R(I)
    =\sum_{\lambda \in I} \alpha_R(\lambda)
    =\sum_{\lambda \in I}
    \lim_{\eps \to 0} \rank \1_{\openBall_\eps(\lambda)}(R).
  \end{align*}
  If $\alpha^*_R(I)=\infty$, then either there is $\lambda \in I$ such
  that $\alpha_R(\lambda)=\infty$, i.e.,
  \begin{align*}
    \infty
    = \alpha_R(\lambda)
    \le \rank \1_{\openBall_\eps(\lambda)}(R)
    \le \rank \1_I(R)
    \le \infty,
  \end{align*}
  as $\alpha_R(\lambda)$ is the limit of a function monotonous in
  $\eps$.  In particular, we have shown
  $\alpha^*_R(I)=\rank \1_I(R)=\infty$.  If $\alpha^*_R(I)=\infty$,
  then there might also be a monotonous sequence $(\lambda_k)_k$ with
  $\lambda_k \in I$, $1\le \alpha_R(\lambda_n) <\infty$ and
  $\lambda_k \to \lambda_\infty$.  In this case,
  $\lambda_k \in \disspec {\alpha_R}$ and
  $\lambda_\infty \in \essspec R=\esssupp \alpha_R$.  Then we have
  \begin{align*}
    \infty \ge
    \rank \1_I(R)
    \ge \rank \1_{\bigdcup_k \{\lambda_k\}}
    = \sum_k \rank \1_{\{\lambda_k\}}
    \ge \sum 1
    = \infty,
  \end{align*}
  i.e., again $\alpha^*_R(I)=\rank \1_I(R)=\infty$.

  If $\alpha^*_R(I)<\infty$, then $\supp \alpha_R \cap I$ consists of
  finitely many isolated points only.  Choose $\eps_0>0$ such that
  $\bigcup_{\lambda \in \supp \alpha_R \cap
    I}\openBall_{\eps_0}(\lambda)$ is a disjoint union.  Then
  $\alpha_R(\lambda)=\rank \1_{\openBall_{\eps_0}(\lambda)}$ for each
  $\lambda \in \supp \alpha_R \cap I$ and
  \begin{align*}
    \rank \1_I(R)
    =\rank \1_{I \cap \supp \alpha_R}(R)
    = \sum_{\lambda \in I \cap \supp \alpha_R}
    \rank \1_{\{\openBall_{\eps_0}(\lambda)\}}
    = \sum_{\lambda \in I \cap \supp \alpha_R} \alpha_R(\lambda),
  \end{align*}
  i.e., the desired equality also in this case.
\end{proof}

We have the following converse statement (see
also~\cite[Prop.~2.8]{azoff-davis:84}):
\begin{proposition}
  \label{prp:char.cmf2}
  If $\alpha$ is a crude multiplicity function with compact support
  then there is a bounded operator $R$ in a separable Hilbert space
  such that its crude multiplicity function $\alpha_R$ equals $\alpha$
\end{proposition}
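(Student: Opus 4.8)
The plan is to construct $R$ explicitly as an orthogonal direct sum of a \emph{discrete part} and an \emph{essential part}, reading both parts directly off the discrete and essential support of $\alpha$. Write $D:=\dissupp\alpha=\alpha^{-1}(\N)$ and $E:=\esssupp\alpha=\alpha^{-1}(\{\infty\})$, so that $\supp\alpha=D\cupdot E$ is compact and $E$ is closed. Recall from \Def{cmf} that every point of $D$ is isolated in $\supp\alpha$; since $D$ is then a set of isolated points of a subset of the second-countable space $\R$, it is at most countable, and its limit points all lie in the closed set $\supp\alpha$ but not in $D$, hence in $E$. This last observation is the structural fact that makes the two summands fit together.

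For the discrete part I would set
\[
  \HS_D:=\bigoplus_{\lambda\in D}\C^{\alpha(\lambda)},
  \qquad
  R_D:=\bigoplus_{\lambda\in D}\lambda\,\id_{\C^{\alpha(\lambda)}},
\]
so that each $\lambda\in D$ is an eigenvalue of $R_D$ of multiplicity $\alpha(\lambda)\in\N$. For the essential part I would choose a countable dense subset $\{\mu_k\}_k$ of $E$ (possible since $E\subseteq\R$ is separable) and set
\[
  \HS_E:=\bigoplus_{k}\ell^2(\N),
  \qquad
  R_E:=\bigoplus_{k}\mu_k\,\id,
\]
so that each $\mu_k$ is an eigenvalue of $R_E$ of infinite multiplicity. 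Finally I put $\HS:=\HS_D\oplus\HS_E$ and $R:=R_D\oplus R_E$. Since $\supp\alpha$ is bounded, all scalars $\lambda,\mu_k$ are bounded by $\sup\{|t|:t\in\supp\alpha\}$, so $R$ is a bounded self-adjoint operator, and $\HS$ is a countable direct sum of separable spaces, hence separable.

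It then remains to verify $\alpha_R=\alpha$. The key computational tool is additivity of the crude multiplicity function under orthogonal direct sums: since $\1_B(R)=\1_B(R_D)\oplus\1_B(R_E)$ for every Borel set $B$, one has $\rank\1_B(R)=\rank\1_B(R_D)+\rank\1_B(R_E)$, and passing to the limit $\eps\to0$ over $B=\openBall_\eps(\lambda)$ gives $\alpha_R(\lambda)=\alpha_{R_D}(\lambda)+\alpha_{R_E}(\lambda)$ in $\NbarO$. I would next record the two spectra: $\spec{R_D}=\overline D$ and $\spec{R_E}=\overline{\{\mu_k\}}=E$. For $\lambda\in E$, density of $\{\mu_k\}$ forces some $\mu_k$ into every interval $\openBall_\eps(\lambda)$, so $\rank\1_{\openBall_\eps(\lambda)}(R_E)=\infty$ for all $\eps>0$, whence $\alpha_R(\lambda)=\infty=\alpha(\lambda)$. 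For $\lambda\in D$ we have $\lambda\notin E=\spec{R_E}$, so $\alpha_{R_E}(\lambda)=0$; and since $\lambda$ is isolated in $\supp\alpha\supseteq\overline D$, it is an isolated eigenvalue of $R_D$ whose only contributing block for small $\eps$ is the $\lambda$-block, giving $\alpha_{R_D}(\lambda)=\alpha(\lambda)$. Finally, for $\lambda\notin\supp\alpha$ one checks $\lambda\notin\overline D\cup E=\spec R$, so $\alpha_R(\lambda)=0=\alpha(\lambda)$.

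The step I expect to require the most care is the bookkeeping of the spectrum at the junction of the two parts: the limit points of the discrete eigenvalues of $R_D$ genuinely belong to $\spec R$, and I must ensure they are exactly absorbed into $E$, so that $\spec R=\overline D\cup E=\supp\alpha$ and no spurious points appear. This is precisely where the defining property that each point of $\alpha^{-1}(\N)$ is isolated in $\supp\alpha$ — equivalently, that $\alpha^{-1}(\{\infty\})$ is closed — is used, and it guarantees both that $R_E$ contributes no extra multiplicity at points of $D$ and that $\alpha_R$ vanishes off $\supp\alpha$.
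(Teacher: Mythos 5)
Your proof is correct and follows essentially the same route as the paper: both constructions produce a diagonal operator whose eigenvalues are the points of $\dissupp\alpha$ with multiplicity $\alpha(\lambda)$ together with a countable dense subset of $\esssupp\alpha$, and then verify $\alpha_R=\alpha$ by computing $\rank\1_{\openBall_\eps(\lambda)}(R)$ as $\eps\to0$. Your bookkeeping via the orthogonal sum $R_D\oplus R_E$ (with infinite-multiplicity blocks on the essential part) is only a cosmetic reorganisation of the paper's single enumeration and, if anything, makes the verification at the junction of the two parts slightly more transparent.
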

\begin{proof}
  For $n \in \N$ let $S_n=\alpha^{-1}(\{n\})$; for $n=\infty$ let
  $S_n$ be a countable (or finite) and dense subset of
  $\alpha^{-1}(\{\infty\})$.  Let $\HS$ be a (separable) Hilbert space
  with orthonormal basis $(\phi_k)_k$ with the same cardinality as
  $\alpha(\R)$.  Let now $(\lambda_k)_k$ be a sequence such that each
  value $\lambda \in S_n$ ($n \in \Nbar$) appears in $(\lambda_k)_k$.
  Moreover, if $\alpha(\lambda)=n$ then
  $\set{k \in \Nbar}{\lambda_k=\lambda}$ contains exactly $n$ elements
  (in other words, if $\alpha(\lambda)=n$, then we repeat $\lambda$
  exactly $n$ times in $(\lambda_k)_k$.  For each $n \in \Nbar$, let
  $K_n$ be a subset of $\N$ such that
  \begin{align*}
    \alpha^{-1}(\{n\})
    =\set{\lambda_k}{k \in K_n},
    \qquadtext{then we have}
    \N=\bigdcup_{n \in \Nbar} K_n,
  \end{align*}
  Now we set
  $R=\sum_{n \in \Nbar} \sum_{k \in K_n} \lambda_k \iprod \cdot
  {\phi_k} \phi_k$.  As $\supp \alpha$ is compact, $R$ is a bounded
  operator with $\spec R = \supp \alpha$.  Note that each
  $\lambda \in \supp \alpha$ appears for exactly one $k \in K_n$ if
  $n=\alpha(\lambda) \in \N$ and at most once if $n=\infty$.
  Moreover, note that
  \begin{gather*}
    \rank \1_I(R)
    =\card{{\set{k \in \N}{\lambda_k \in I}}}
    =\sum_{n \in \Nbar} \card{{\set{k \in K_n}{\lambda_k \in I}}}
    \quad\text{and}\\
    \lim_{\eps \to 0} \card{{\set{k \in K_n}{\lambda_k
    \in \openBall_\eps(\lambda)}}}
    = \alpha(\lambda).
  \end{gather*}
  It remains to show that the crude multiplicity function $\alpha_R$
  associated with $R$ is $\alpha$:
  \begin{align*}
    \alpha_R(\lambda)
    &=\lim_{\eps \to 0} \rank \1_{\openBall_\eps(\lambda)}(R)\\
    &=\lim_{\eps \to 0} \sum_{n \in \Nbar}
    \card{\set{k \in K_n}{\lambda_k \in \openBall_\eps(\lambda)}}\\
    &=\sum_{n \in \Nbar}
      \lim_{\eps \to 0}
      \card{\set{k \in K_n}{\lambda_k \in \openBall_\eps(\lambda)}}\\
    &=\alpha(\lambda). \qedhere
  \end{align*}
\end{proof}

\begin{remarks}
  \label{rem:cmf}
  \indent
  \begin{enumerate}
  \item As we only consider separable (or finite dimensional) Hilbert
    spaces, one infinite cardinal number $\infty$ is enough.

  \item We do not have $\alpha(\lambda)=\rank \1_{\{\lambda\}}(R)$ in
    general: we have $\alpha(\lambda)>0$ for all $\lambda \in \spec R$
    (see \Prpenum{char.cmf}{char.cmf.b}), but
    $\1_{\{\lambda\}}(R)\ne 0$ if and only if $\lambda$ is an
    eigenvalue of $\R$.
  \end{enumerate}
\end{remarks}

\subsection{Distances between crude multiplicity functions}
\label{ssec:dist.cmf}

Next, we define the \emph{\LevyProkhorov distance} of the associated
measures $\alpha_1^*$ and $\alpha_2^*$
(see~\cite[Def.~2.2]{azoff-davis:84}).  Originally, this distance has
been defined only for probability measures, but the extension to
cardinality-valued measures is straightforward.
\begin{definition}[\LevyProkhorov distance]
  \label{def:dist.cmf}
  For two crude multiplicity functions $\alpha_1$ and $\alpha_2$ and a
  subset $S \subset \R$ we define
  \begin{subequations}
    \begin{align}
      \label{eq:dist.cmf.a}
      \acute \delta(\alpha_1,\alpha_2,S)
      &:= \inf \set{\eps>0}{\alpha_1^*(S) \le \alpha_2^*(\closedBall_\eps(S))},\\
      \acute \delta(\alpha_1,\alpha_2)
      &:= \sup \set{\acute \delta(\alpha_1,\alpha_2,I)}{I \in \OpInt}
      \quad\text{and}\\
      \label{eq:dist.cmf.c}
      \dCmffiN(\alpha_1,\alpha_2)
      &:= \sup \set{\acute \delta(\alpha_1,\alpha_2,F)}
        {F \in \Fin(\dissupp \alpha_1)},
    \end{align}
    where $\closedBall_\eps(S)$ denotes the (closed)
    $\eps$-neighbourhood of $S\subset \R$, see~\eqref{eq:nbd.ball}.
    Recall that $\OpInt$ is the set of all open intervals and
    $\Fin(\Sigma)$ the set of all finite subsets of $\Sigma$.
    \begin{enumerate}
    \item The \emph{(\LevyProkhorov) distance} of $\alpha_1$ and
      $\alpha_2$ is defined as
      \begin{equation}
        \label{eq:delta.cr.mult.c}
        \delta(\alpha_1, \alpha_2)
        := \max \{\acute \delta(\alpha_1,\alpha_2),
        \acute \delta(\alpha_2,\alpha_1)\}.
      \end{equation}
    \item The \emph{finite (\LevyProkhorov) distance} of $\alpha_1$
      and $\alpha_2$ is defined as
      \begin{equation}
        \label{eq:delta.cr.mult.d}
        \dCmffin(\alpha_1, \alpha_2)
        := \max \{    \dCmffiN(\alpha_1,\alpha_2),
        \dCmffiN(\alpha_2,\alpha_1)\}.
      \end{equation}
    \end{enumerate}
  \end{subequations}
\end{definition}

We have the following alternative description of
$\delta(\alpha_1,\alpha_2)$.  This characterisation resembles the
characterisation of the Hausdorff distance in~\eqref{eq:eq.hausdorff};
its relation with the Hausdorff distance is made precise in
\Prp{dist.cmf.haus}.
\begin{subequations}
\begin{proposition}
  \label{prp:dist.cmf}
  We have
  \begin{align}
    \label{eq:delta.cr.mult.e}
    \delta(\alpha_1, \alpha_2)
    &= \inf\set{\eps > 0}
    {\forall I \in \OpInt \colon
    \alpha_1^*(I)\le \alpha_2^*(\closedBall_\eps(I))
    \text{ and }
    \alpha_2^*(I)\le \alpha_1^*(\closedBall_\eps(I))},\\
    \dCmffin(\alpha_1, \alpha_2)
    &= \inf\set{\eps > 0}
    {\forall F_1 \in \Fin(\dissupp \alpha_1) \colon
    \alpha_1^*(F_1)\le \alpha_2^*(\closedBall_\eps(F_1))
      \text{ and }\nonumber\\
    &\hspace{0.25\textwidth}
      \forall F_2 \in \Fin(\dissupp \alpha_2) \colon
      \alpha_2^*(I)\le \alpha_1^*(\closedBall_\eps(I))},
    \label{eq:delta.cr.mult.f}
  \end{align}
  (recall that $\OpInt$ is the set of all open intervals and that
  $\Fin(\dissupp \alpha_n)$ consists of all finite subsets of
  $\dissupp \alpha_n$).
\end{proposition}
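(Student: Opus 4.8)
The plan is to read both displayed identities as one and the same exchange-of-quantifiers statement, and to show that this exchange is legitimate purely because of a monotonicity-in-$\eps$ property. With that reduction the whole proposition collapses onto a single elementary lemma about \emph{up-sets} in $(0,\infty)$, and the two cases \eqref{eq:delta.cr.mult.e} and \eqref{eq:delta.cr.mult.f} become literally the same argument applied to two different index families.

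First I would isolate the structural fact that drives everything. For a fixed subset $S \subseteq \R$ put
\[
  U_S := \set{\eps > 0}{\alpha_1^*(S) \le \alpha_2^*(\closedBall_\eps(S))}.
\]
Since $\closedBall_\eps(S) \subseteq \closedBall_{\eps'}(S)$ whenever $\eps \le \eps'$ and $\alpha_2^*$ is a monotone measure by \Prp{cmf}, the map $\eps \mapsto \alpha_2^*(\closedBall_\eps(S))$ is non-decreasing; hence $U_S$ is an up-set (if $\eps \in U_S$ and $\eps' \ge \eps$ then $\eps' \in U_S$), and by the very definition one has $\acute\delta(\alpha_1,\alpha_2,S) = \inf U_S$, with the convention $\inf\emptyset := +\infty$ covering the degenerate case where $\alpha_1^*(S)$ exceeds $\alpha_2^*(\closedBall_\eps(S))$ for every $\eps$.

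The core step is then the lemma: for any family $(U_i)_i$ of up-sets in $(0,\infty)$ one has $\inf \bigcap_i U_i = \sup_i \inf U_i$. I would prove this through the two obvious inequalities. Any $\eps > \sup_i \inf U_i$ satisfies $\eps > \inf U_i$ for every $i$, and an up-set contains everything strictly above its infimum, so $\eps \in \bigcap_i U_i$; this gives $\inf \bigcap_i U_i \le \sup_i \inf U_i$. Conversely, any $\eps < \sup_i \inf U_i$ lies below $\inf U_{i_0}$ for some $i_0$, hence $\eps \notin U_{i_0}$ and so $\eps \notin \bigcap_i U_i$; this gives the reverse inequality. Applying this with $i$ ranging over $\OpInt$ yields
\[
  \acute\delta(\alpha_1,\alpha_2)
  = \sup_{I \in \OpInt} \inf U_I
  = \inf\set{\eps>0}{\forall I \in \OpInt \colon \alpha_1^*(I) \le \alpha_2^*(\closedBall_\eps(I))},
\]
and symmetrically for $\acute\delta(\alpha_2,\alpha_1)$.

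Finally I would combine the two directions. The two universally-quantified conditions define up-sets $V_1,V_2 \subseteq (0,\infty)$ (each an intersection of up-sets) with $\inf V_1 = \acute\delta(\alpha_1,\alpha_2)$ and $\inf V_2 = \acute\delta(\alpha_2,\alpha_1)$ by the previous step; the right-hand side of \eqref{eq:delta.cr.mult.e} is exactly $\inf(V_1 \cap V_2)$, so the lemma applied to the two-element family $\{V_1,V_2\}$ gives $\inf(V_1\cap V_2) = \max\{\inf V_1,\inf V_2\} = \delta(\alpha_1,\alpha_2)$, which is \eqref{eq:delta.cr.mult.e}. Identity \eqref{eq:delta.cr.mult.f} then follows verbatim with the open intervals $I \in \OpInt$ replaced by finite sets $F \in \Fin(\dissupp \alpha_n)$ (for the appropriate index $n$ in each direction), since $\closedBall_\eps(F)$ is again increasing in $\eps$ and every monotonicity step is unchanged, so $\dCmffiN(\alpha_1,\alpha_2)$ and $\dCmffiN(\alpha_2,\alpha_1)$ combine into $\dCmffin(\alpha_1,\alpha_2)$ exactly as before. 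I expect the ``main obstacle'' to be bookkeeping rather than depth: I must argue the up-set lemma strictly through inequalities and not attainment, because the supremum over the infinite family $\OpInt$ need not be attained and the infima defining the $U_S$ may or may not belong to $U_S$; and I must carry the $\inf\emptyset = +\infty$ convention throughout so that the infinite-distance (essential-support mismatch) cases are handled uniformly.
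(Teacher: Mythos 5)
Your proposal is correct and takes essentially the same route as the paper: the paper also reduces both identities to the identity $\sup_i \inf L_i = \inf \bigcap_i L_i$ for the unbounded intervals (up-sets) $\acute L(S)$, $\grave L(S)$, which is exactly your up-set lemma. If anything, your explicit treatment of the convention $\inf\emptyset = +\infty$ is slightly more careful than the paper's remark that ``the intersection of any such intervals is always non-empty'' (which can fail for an infinite family, though the identity still holds with the convention).
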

\begin{proof}
  Let
  $\acute L(S) := \acute
  L(\alpha_1,\alpha_2,S):=\set{\eps>0}{\alpha_1^*(S) \le
    \alpha_2^*(\closedBall_\eps(S))}$ and
  $\grave L(S) := \acute L(\alpha_2,\alpha_1,S)$.  Note that
  $\acute L(S)$ is an unbounded interval in $(0,\infty)$ as
  $\eps \in \acute L(S)$ and $\eps' \ge \eps$ implies
  $\eps' \in \acute L(S)$.  In particular, the intersection of any such
  intervals is always non-empty.

  Now,~\eqref{eq:delta.cr.mult.e} is equivalent to
  \begin{align*}
    \max \{ \sup_I \inf \acute L(I), \sup_I \inf \grave L(I) \}
    = \inf \bigcap_I (\acute L(I) \cap \grave L(I)),
  \end{align*}
  where $I$ runs through all open subsets of $\R$.  Actually, this is
  a consequence of
  \begin{align}
    \label{eq:inf.cap}
    \max \{\inf L_1,\inf L_2\}
    = \inf L_1 \cap L_2
    \quadtext{and}
    \sup_{k \in K} \inf L_k
    = \inf \bigcap_{k \in K} L_k.
  \end{align}
  Note that the first one is a special case of the second one (if
  $K=\{1,2\}$).  In both equalities, ``$\ge$'' is always true.  The
  other inequalities are only true if the intersection is non-empty
  and if all sets are \emph{intervals}.\footnote{A counterexample is
    given by $L_k=\{1/(k+1),1\}$, then
    $\max \{\inf L_1,\inf L_2\}=1/2$, while $\inf L_1 \cap L_2=1$;
    here the $L_k$'s are not intervals.  But even if they are, the
    statement might fail: choose e.g.\ $L_k=(1-k,2-k)$ for $k=1,2$,
    then the maximum of the infima is $0$, while
    $\inf L_1 \cap L_2=\inf \emptyset=\infty$.  Similar statements
    hold for $k \in \N$.}

  Here, the desired equality follows as all sets $\acute L(I)$ and
  $\grave L(I)$ under competition are intervals and their intersection
  is non-empty.
\end{proof}
\end{subequations}

We now show the following equivalent characterisations of the
condition appearing in the definition of crude multiplicity functions:
\begin{lemma}
  \label{lem:cmf.open}
  Let $\alpha_n$ be two crude multiplicity functions ($n=1,2$).
  \begin{enumerate}
  \item
    \label{cmf.open.a}
    For $\eps>0$ we have
    \begin{align*}
      \forall I \in \OpInt\colon
      \alpha_1(I) \le \alpha_2^*(\openBall_\eps(I))
      \quad\iff\quad
      \forall U \in \Top(\R) \colon
      \alpha_1(U) \le \alpha_2^*(\openBall_\eps(U)).
    \end{align*}
  \item
    \label{cmf.open.b}
    We have
    \begin{align*}
      \acute \delta(\alpha_1,\alpha_2)
      &= \inf\set{\eps>0}{\forall I \in \OpInt \colon
        \alpha_1(I) \le \alpha_2^*(\openBall_\eps(I))}\\
      &= \inf\set{\eps>0}{\forall U \in \Top(\R) \colon
        \alpha_1(U) \le \alpha_2^*(\openBall_\eps(U))}.
    \end{align*}
    Here, $\openBall_\eps(I)=\set{\lambda \in \R}{d(\lambda,I)<\eps}$
    is the \emph{open} $\eps$-neighbourhood of $I$,
    cf.~\eqref{eq:nbd.ball};
  \end{enumerate}
\end{lemma}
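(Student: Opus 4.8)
The plan is to reduce both parts to the countable additivity of the measures $\alpha_n^*$ (see \Prp{cmf} and \Prpenum{cmf2}{cmf.e}), the monotonicity of measures, and the interval identity~\eqref{eq:inf.cap} already exploited in \Prp{dist.cmf}; throughout I read $\alpha_n(I)$ in the statement as the associated measure $\alpha_n^*(I)$. For part~\itemref{cmf.open.a} the implication ``$\Leftarrow$'' is immediate, since every open interval is an open set. For ``$\implies$'' I would fix an open $U \subseteq \R$ and write it as the countable disjoint union $U = \bigdcup_k I_k$ of its connected components, which are open intervals. Then $\openBall_\eps(U) = \bigcup_k \openBall_\eps(I_k)$ is again open, and I decompose it into its pairwise disjoint connected components $J_\ell$ (open intervals); setting $K_\ell := \set{k}{\openBall_\eps(I_k) \subseteq J_\ell}$, the $K_\ell$ partition the index set and $J_\ell = \bigcup_{k \in K_\ell}\openBall_\eps(I_k)$.

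The geometric heart of the argument is the identity $J_\ell = \openBall_\eps(\tilde I_\ell)$, where $\tilde I_\ell = (a_\ell,b_\ell)$ denotes the convex hull (smallest open interval) of $\bigcup_{k \in K_\ell} I_k$: since $J_\ell$ is connected it is an interval, and computing its endpoints gives $\inf J_\ell = a_\ell - \eps$ and $\sup J_\ell = b_\ell + \eps$, whence $J_\ell = (a_\ell-\eps,b_\ell+\eps) = \openBall_\eps(\tilde I_\ell)$. With this I chain, using countable additivity of $\alpha_1^*$, monotonicity ($\bigcup_{k \in K_\ell} I_k \subseteq \tilde I_\ell$), the interval hypothesis applied to each $\tilde I_\ell$, the identity above, and countable additivity of $\alpha_2^*$ over the disjoint $J_\ell$:
\begin{align*}
  \alpha_1^*(U)
  = \sum_\ell \alpha_1^*\Bigl(\bigcup_{k \in K_\ell} I_k\Bigr)
  \le \sum_\ell \alpha_1^*(\tilde I_\ell)
  \le \sum_\ell \alpha_2^*\bigl(\openBall_\eps(\tilde I_\ell)\bigr)
  = \sum_\ell \alpha_2^*(J_\ell)
  = \alpha_2^*\bigl(\openBall_\eps(U)\bigr).
\end{align*}

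For part~\itemref{cmf.open.b}, the second equality follows at once from part~\itemref{cmf.open.a}: for each fixed $\eps>0$ the interval condition and the open-set condition are equivalent, so the two infima over admissible $\eps$ coincide. For the first equality I set $\acute L(I) := \set{\eps>0}{\alpha_1^*(I) \le \alpha_2^*(\closedBall_\eps(I))}$ (these are exactly the up-sets from \Prp{dist.cmf}, each an unbounded interval in $(0,\infty)$) and $M(I) := \set{\eps>0}{\alpha_1^*(I) \le \alpha_2^*(\openBall_\eps(I))}$. By the definition of $\acute\delta$ and the second identity in~\eqref{eq:inf.cap},
\begin{align*}
  \acute \delta(\alpha_1,\alpha_2)
  = \sup_{I \in \OpInt} \inf \acute L(I)
  = \inf \bigcap_{I \in \OpInt} \acute L(I),
\end{align*}
so it remains to prove $\inf \bigcap_I \acute L(I) = \inf \bigcap_I M(I)$. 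Here $\openBall_\eps(I) \subseteq \closedBall_\eps(I)$ yields $M(I) \subseteq \acute L(I)$, hence ``$\le$''; conversely $\closedBall_\eps(I) \subseteq \openBall_{\eps'}(I)$ for every $\eps'>\eps$ shows that $\eps \in \acute L(I)$ forces $\eps' \in M(I)$ for all $\eps'>\eps$, so every $\eps'$ exceeding $\inf\bigcap_I \acute L(I)$ lies in $\bigcap_I M(I)$ and the reverse inequality follows (the degenerate case $\acute\delta=\infty$, where the intersections are empty, is checked separately and gives $\infty$ on both sides, so~\eqref{eq:inf.cap} is not needed there).

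The step I expect to be the main obstacle is the geometric identity $J_\ell = \openBall_\eps(\tilde I_\ell)$ in part~\itemref{cmf.open.a}: simply summing the interval hypothesis over the components $I_k$ fails, because the neighbourhoods $\openBall_\eps(I_k)$ overlap, and the whole point is that, after regrouping according to the connected components of $\openBall_\eps(U)$, the $\eps$-neighbourhood of the convex hull of each cluster coincides \emph{exactly} with the corresponding component, so that no $\alpha_2$-mass lying outside $\openBall_\eps(U)$ is over-counted. By comparison, the closed-versus-open neighbourhood bookkeeping in part~\itemref{cmf.open.b} is routine.
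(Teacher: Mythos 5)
Your proof is correct and follows essentially the same route as the paper: in part (a) your convex hulls $\tilde I_\ell$ of the clustered components of $U$ are exactly the intervals the paper obtains by shrinking each connected component of $\openBall_\eps(U)$ by $\eps$, and the chain of (in)equalities via countable additivity is the same; in part (b) both arguments exploit that the admissible sets of $\eps$ are up-sets and that $\openBall_\eps(I)\subseteq\closedBall_\eps(I)\subseteq\openBall_{\eps'}(I)$ for $\eps'>\eps$. Your write-up of the closed-versus-open comparison in (b) is in fact a little cleaner than the paper's.
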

\begin{proof}
  \itemref{cmf.open.a}: ``$\Leftarrow$'' is trivial.  For
  ``$\Rightarrow$'', let $\openBall_\eps(U)=\bigcup_k I_{k,\eps}$ (any
  open subset of $\R$ is the disjoint union of open intervals).  The
  length of each $I_{k,\eps}$ must be at least $2\eps$: Let
  $\lambda \in U$ then
  $(\lambda-\eps,\lambda+\eps)\subset \openBall_\eps(U)=\bigdcup_k
  I_{k,\eps}$, and as the latter union is disjoint, there must be
  $k=k_\lambda$ such that
  $(\lambda-\eps,\lambda+\eps)\subset I_{k,\eps}$.  Then let $I_k$ be
  the non-empty open interval such that
  $I_{k,\eps}=\openBall_\eps(I_k)$.  We have $\lambda \in I_k$, hence
  also $U \subset \bigdcup_k I_k$.  Now we conclude
  \begin{equation*}
    \alpha_1^*(U)
    \le\alpha_1^*\Bigl(\bigdcup_k I_k\Bigl)
    =\sum_k \alpha_1^*(I_k)
    \le \sum_k \alpha_2^*(\openBall_\eps(I_k))
    =\alpha_2\Bigl(\bigdcup_k \openBall_\eps(I_k)\Bigl)
    =\alpha_2(\openBall_\eps(U)).
  \end{equation*}
  \itemref{cmf.open.b}: Denote the left hand side $\delta_1$ and the
  right hand sides $\delta_2$ and $\delta_3$, 
  respectively.  We first show $\delta_1 = \delta_2$: For
  $S \subset \R$, let
  $\acute {\ring L}(S):=\set{\eps>0}{\alpha_1(S) \le
    \alpha_2^*(\openBall_\eps(S)}$ and $\acute L(S)$ as in the proof
  of \Prp{dist.cmf}.  We have
  $\acute {\ring L}(S) \subset \acute L(S)$, hence
  \begin{align*}
    \bigcap_I \acute {\ring L}(I)
    \subset \bigcap_I \acute L(I),
    \quadtext{hence}
    \delta_1
    =\inf \bigcap_I \acute L(I)
    \le \inf \bigcap_I \acute {\ring L}(I)
    =\delta_2
  \end{align*}
  (we use again~\eqref{eq:inf.cap}).  For $\delta_2 \le \delta_1$, let
  $\eps>0$ such that $\eps> \delta_1$, then there is an open interval
  $I$ such that $\eps \notin \acute {\ring L}(I)$, i.e.,
  $\alpha_1(I)>\alpha_2^*(\closedBall_\eps(I))$.  As
  $\alpha_2^*(\closedBall_\eps(I))\ge \alpha_2^*(\openBall_\eps(I))$,
  we also have $\eps \notin \acute L(I)$, hence $\eps > \delta_2$ and
  we conclude $\delta_2 \le \delta_1$.  $\delta_2=\delta_3$ follows
  from~\itemref{cmf.open.a}.
\end{proof}

Note that if $\alpha_1$ and $\alpha_2$ are purely essential, then
$\dCmffin(\alpha_1,\alpha_2)=0$, as
$\acute \delta(\alpha_1,\alpha_2,\emptyset)=0$.

The following propositions have been proven
in~\cite[Prp.~1.3]{davidson:86} using a slightly different
(equivalent) definition for the \LevyProkhorov distance.  We give
again a proof here, also because we defined crude multiplicity
functions \emph{independently} of operators (see \Def{cmf} and
\Prp{char.cmf}).
\begin{proposition}
  \label{prp:alt.dist.cmf}
  We have
  \begin{align*}
    \dCmffiN(\alpha_1,\alpha_2)
    \le \acute \delta(\alpha_1,\alpha_2)
    \quadtext{and}
    \dCmffin(\alpha_1,\alpha_2)
    \le \delta(\alpha_1,\alpha_2)
  \end{align*}
\end{proposition}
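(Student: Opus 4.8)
The plan is to establish the first inequality, $\dCmffiN(\alpha_1,\alpha_2) \le \acute \delta(\alpha_1,\alpha_2)$, and to deduce the second one for free: since $\dCmffin$ and $\delta$ are exactly the symmetrisations (maxima over the two orderings of the arguments) of $\dCmffiN$ and $\acute \delta$, applying the first inequality both to $(\alpha_1,\alpha_2)$ and to $(\alpha_2,\alpha_1)$ and taking the maximum of the two estimates gives $\dCmffin(\alpha_1,\alpha_2) \le \delta(\alpha_1,\alpha_2)$ immediately.

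For the first inequality I would fix an arbitrary $\eps > \acute \delta(\alpha_1,\alpha_2)$ and show that $\acute \delta(\alpha_1,\alpha_2,F) \le \eps$ for every finite $F \in \Fin(\dissupp \alpha_1)$; taking the supremum over $F$ and then letting $\eps \downarrow \acute \delta(\alpha_1,\alpha_2)$ then yields the claim. The condition defining $\acute \delta(\alpha_1,\alpha_2)$ is monotone in $\eps$, so by the open-set reformulation in \Lemenum{cmf.open}{cmf.open.b} the strict inequality $\eps > \acute \delta(\alpha_1,\alpha_2)$ guarantees $\alpha_1^*(U) \le \alpha_2^*(\openBall_\eps(U))$ for \emph{every} open $U \in \Top(\R)$. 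This open-set version, rather than the single-interval version, is precisely what is needed to handle an $F$ consisting of several separated points.

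Concretely, I would fix $F = \{\lambda_1,\dots,\lambda_k\}$ and any $\eps' > \eps$. Each $\lambda_i \in \alpha_1^{-1}(\N)$ is isolated in $\supp \alpha_1$ by \Defenum{cmf}{def.cmf.b}, so I can choose $r>0$ small enough that the intervals $I_i := \openBall_r(\lambda_i)$ are pairwise disjoint, meet $\supp \alpha_1$ only in $\lambda_i$, \emph{and} satisfy $r \le \eps' - \eps$. For $U := \bigcup_i I_i$ the definition of $\alpha_1^*$ on open sets together with \Prpenum{cmf2}{cmf.c} gives $\alpha_1^*(U) = \sum_i \alpha_1(\lambda_i) = \alpha_1^*(F)$. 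The constraint $r \le \eps' - \eps$ is the crux: it forces the fattened set back inside the $\eps'$-neighbourhood of $F$, namely $\openBall_\eps(U) = \bigcup_i \openBall_{r+\eps}(\lambda_i) \subseteq \bigcup_i \openBall_{\eps'}(\lambda_i) = \openBall_{\eps'}(F)$. Feeding $U$ into the hypothesis then yields
\[
  \alpha_1^*(F) = \alpha_1^*(U) \le \alpha_2^*(\openBall_\eps(U)) \le \alpha_2^*(\openBall_{\eps'}(F)) \le \alpha_2^*(\closedBall_{\eps'}(F)),
\]
so $\eps'$ belongs to the set defining $\acute \delta(\alpha_1,\alpha_2,F)$, whence $\acute \delta(\alpha_1,\alpha_2,F) \le \eps'$; as $\eps' > \eps$ was arbitrary, $\acute \delta(\alpha_1,\alpha_2,F) \le \eps$, as required.

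I expect the main obstacle to be precisely the open-versus-closed neighbourhood mismatch, not any of the measure-theoretic bookkeeping. Covering $F$ by a single interval fails, because it would absorb the $\alpha_2^*$-mass sitting in the gaps between the points of $F$ and so over-count the right-hand side; and passing to a limit $r \downarrow 0$ directly is also dangerous, since $\alpha_2^*$ takes values in $\NbarO$ and continuity of measures from above for a decreasing family would require a finiteness assumption that is not at hand. The device that bypasses both difficulties is to absorb the fattening radius $r$ into an arbitrarily small enlargement from $\eps$ to $\eps'$, so that the inclusion $\openBall_\eps(U) \subseteq \openBall_{\eps'}(F)$ holds on the nose and no limiting or additivity argument on $\alpha_2^*$ is ever invoked.
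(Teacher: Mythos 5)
Your proof is correct and is essentially the contrapositive rephrasing of the paper's own argument: both rest on the same two-$\eps$ device (absorbing the radius of an open neighbourhood $U \supset F$ into the gap $\eps'-\eps$ so that the fattened set lands inside $\closedBall_{\eps'}(F)$), combined with the open-set characterisation of $\acute\delta$ from \Lemenum{cmf.open}{cmf.open.b}, and the second inequality follows by symmetrisation in both cases. The only inessential difference is that you shrink $U$ until it meets $\supp \alpha_1$ only in $F$ (invoking isolatedness of the points of $\dissupp\alpha_1$) to get the exact equality $\alpha_1^*(U)=\alpha_1^*(F)$, whereas monotonicity of $\alpha_1^*$, giving $\alpha_1^*(F)\le\alpha_1^*(U)$, already suffices --- which is what the paper's simpler choice $U=\openBall_{\eps'-\eps}(F)$ exploits.
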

\begin{proof}
  Let $\eps \in \R$ such that
  $0<\eps <\dCmffiN(\alpha_1,\alpha_2)$, and let $\eps'>0$ such
  that $\eps < \eps' < \dCmffiN(\alpha_1,\alpha_2)$.  Note that
  \begin{align}
    \nonumber
    \dCmffiN(\alpha_1,\alpha_2)
    &=\sup \set{\acute \delta(\alpha_1,\alpha_2,F)}
      {F \in \Fin(\dissupp \alpha_1)}\\
    \label{eq:char.cmffin}
    &=\inf \bigset{\eps>0}%
    {\forall F \in \Fin(\dissupp \alpha_1)\colon
      \alpha_1^*(F) \le \alpha_2^*(\closedBall_\eps(F))}.
  \end{align}
  by~\eqref{eq:inf.cap}.  In particular, there is a finite set
  $F \subset \dissupp \alpha_1$ such that
  $\alpha_2^*(\closedBall_{\eps'}(F))< \alpha_1^*(F)< \infty$.  Let
  $\lambda \in F$ such that
  $\alpha_2^*(\closedBall_{\eps'}(\lambda)) <
  \alpha_1^*(\{\lambda\})$.  Set $U:=\openBall_{\eps'-\eps}(F)$.  As
  $\eps'>\eps$, we have $F \subset U$.  Moreover,
  $\closedBall_\eps(U)=\closedBall_{\eps'}(F)$ and
  \begin{align*}
    \alpha_2^*(\openBall_\eps(U))
    \le \alpha_2^*(\closedBall_\eps(U))
    <\alpha_1^*(F) \le
    \alpha_1^*(U),
  \end{align*}
  hence $\eps < \acute \delta(\alpha_1,\alpha_2)$, showing the desired
  inequality for open sets; for intervals use~\Lem{cmf.open}.  The
  last assertion follows from the first one.
\end{proof}

\begin{proposition}
  \label{prp:dist.cmf.haus}
  Let $\alpha_n$ be two crude multiplicity functions ($n=1,2$).  Then we have
  \begin{subequations}
    \begin{enumerate}
    \item
      \label{dist.cmf.haus.a}
      \begin{equation}
      \label{eq:dist.cmf.haus.a}
      \dHaus(\supp[\bullet] \alpha_1,\supp[\bullet] \alpha_2)
      \le \delta(\alpha_1,\alpha_2)
    \end{equation}
    for the discrete, essential or entire support
    ($\bullet \in\{\mathrm{disc}, \mathrm{ess}, \emptyset\}$).
    \item
      \label{dist.cmf.haus.b}
      Moreover, we have
      \begin{equation}
        \label{eq:dist.cmf.haus.b}
        \max\{\dHaus(\esssupp \alpha_1,\esssupp \alpha_2),
        \dCmffin(\alpha_1,\alpha_2)\}
        = \delta(\alpha_1,\alpha_2).
      \end{equation}
    \item
      \label{dist.cmf.haus.c}
      In particular, if $\alpha_1$ and $\alpha_2$ are purely essential,
      then we have
      \begin{equation}
        \label{eq:dist.cmf.haus.c}
        \dHaus(\supp \alpha_1,\supp \alpha_2)
        = \delta(\alpha_1,\alpha_2).
      \end{equation}
    \end{enumerate}
  \end{subequations}
\end{proposition}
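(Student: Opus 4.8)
The plan is to treat the identity~\eqref{eq:dist.cmf.haus.b} as the master statement and to deduce~\eqref{eq:dist.cmf.haus.a} and~\eqref{eq:dist.cmf.haus.c} from it. The heart of the matter is a \emph{one-sided decomposition} of the L\'evy--Prokhorov distance into an ``infinite-mass'' (essential) and a ``finite-mass'' (discrete) contribution, namely
\[
  \acute \delta(\alpha_1,\alpha_2)
  = \max\bigl\{\dHauS(\esssupp \alpha_1,\esssupp \alpha_2),\,
               \dCmffiN(\alpha_1,\alpha_2)\bigr\}.
\]
Granting this, I would obtain~\eqref{eq:dist.cmf.haus.b} by taking the maximum over the two orderings of $(\alpha_1,\alpha_2)$ and regrouping, using that $\dHaus$ and $\dCmffin$ are by definition the symmetrisations of $\dHauS$ and $\dCmffiN$ (see~\eqref{eq:hausdorff} and~\eqref{eq:delta.cr.mult.d}).

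To prove the decomposition I would argue ``$\ge$'' and ``$\le$'' separately. For ``$\ge$'' the bound $\acute \delta(\alpha_1,\alpha_2)\ge \dCmffiN(\alpha_1,\alpha_2)$ is exactly \Prp{alt.dist.cmf}, while $\acute \delta(\alpha_1,\alpha_2)\ge \dHauS(\esssupp\alpha_1,\esssupp\alpha_2)$ I would get by testing shrinking intervals $\openBall_r(\lambda)$ at an essential point $\lambda\in\esssupp\alpha_1$: there $\alpha_1^*(\openBall_r(\lambda))=\infty$, so any competitor $\eps$ forces $\alpha_2^*(\closedBall_\eps(\openBall_r(\lambda)))=\infty$; since discrete points of $\alpha_2$ are isolated in $\supp\alpha_2$ (\Defenum{cmf}{def.cmf.b}), a bounded interval of infinite $\alpha_2^*$-measure must meet $\esssupp\alpha_2$, whence $\eps+r\ge d(\lambda,\esssupp\alpha_2)$, and letting $r\to0$ and taking the supremum over $\lambda$ gives the claim. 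For ``$\le$'' I fix $\eps$ above the right-hand maximum and an open interval $I$. If $\alpha_1^*(I)<\infty$, then $F:=I\cap\dissupp\alpha_1$ is finite with $\alpha_1^*(I)=\alpha_1^*(F)$, and $\eps>\dCmffiN$ with~\eqref{eq:char.cmffin} gives $\alpha_1^*(F)\le\alpha_2^*(\closedBall_\eps(F))\le\alpha_2^*(\closedBall_\eps(I))$. If $\alpha_1^*(I)=\infty$, I split into the case $I\cap\esssupp\alpha_1\ne\emptyset$ (then $\eps>\dHauS(\esssupp\alpha_1,\esssupp\alpha_2)$ places an essential point of $\alpha_2$ inside $\closedBall_\eps(I)$, so $\alpha_2^*(\closedBall_\eps(I))=\infty$) and the case $I\cap\esssupp\alpha_1=\emptyset$ (then $I$ carries infinitely many discrete points, and finite subsets $F_k\subseteq I\cap\dissupp\alpha_1$ with $\alpha_1^*(F_k)\to\infty$ again force $\alpha_2^*(\closedBall_\eps(I))=\infty$ via $\eps>\dCmffiN$). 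Since $\acute\delta(\alpha_1,\alpha_2)$ is the supremum over $I\in\OpInt$, this yields the upper bound.

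With~\eqref{eq:dist.cmf.haus.b} available, the remaining parts are short. For~\eqref{eq:dist.cmf.haus.c}, purely essential $\alpha_n$ have $\dissupp\alpha_n=\emptyset$, hence $\dCmffin(\alpha_1,\alpha_2)=0$, and the master identity collapses to $\delta(\alpha_1,\alpha_2)=\dHaus(\esssupp\alpha_1,\esssupp\alpha_2)=\dHaus(\supp\alpha_1,\supp\alpha_2)$. For~\eqref{eq:dist.cmf.haus.a}, the essential-support inequality is one of the two entries of the maximum in~\eqref{eq:dist.cmf.haus.b}, hence $\le\delta$. The entire-support case I would prove directly from the characterisations of $\delta$ in \Prp{dist.cmf} and of $\dHaus$ in~\eqref{eq:eq.hausdorff}: for $\eps>\delta$ and $\lambda\in\supp\alpha_1$ one has $\alpha_1^*(\openBall_r(\lambda))\ge1$, so $\alpha_2^*(\closedBall_\eps(\openBall_r(\lambda)))\ge1$ and thus $[\lambda-\eps-r,\lambda+\eps+r]\cap\supp\alpha_2\ne\emptyset$ for every $r>0$; intersecting these nested nonempty compacta as $r\to0$ gives $[\lambda-\eps,\lambda+\eps]\cap\supp\alpha_2\ne\emptyset$, i.e.\ $\supp\alpha_1\subseteq\closedBall_\eps(\supp\alpha_2)$, and symmetrically.

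The step I expect to be the genuine obstacle is the \emph{discrete}-support case of~\eqref{eq:dist.cmf.haus.a}. The difficulty is structural: the argument just given only places an isolated point $\lambda\in\dissupp\alpha_1$ within $\eps$ of $\supp\alpha_2$, and the covering $\alpha_2$-mass may well be \emph{essential} rather than discrete, so one does not automatically find a point of $\dissupp\alpha_2$ near $\lambda$. To push through one would have to invoke the reverse inequality $\acute\delta(\alpha_2,\alpha_1)$ to force such a nearby essential point of $\alpha_2$ to be matched in turn by essential mass of $\alpha_1$ close to $\lambda$, and then play this off against the isolatedness of $\lambda$ in $\supp\alpha_1$. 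This is exactly where the reasoning is delicate, and it is the point at which I would look hardest for a correct argument or for the precise additional hypothesis (finiteness of $\delta$ alone appearing insufficient) under which the discrete-support bound actually holds.
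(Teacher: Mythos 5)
Your treatment of parts~(\ref{dist.cmf.haus.b}) and~(\ref{dist.cmf.haus.c}) and of the essential and entire-support cases of~(\ref{dist.cmf.haus.a}) is correct, and in substance it matches the paper's argument while being organised more cleanly: you prove the one-sided decomposition $\acute\delta(\alpha_1,\alpha_2)=\max\{\dHauS(\esssupp\alpha_1,\esssupp\alpha_2),\dCmffiN(\alpha_1,\alpha_2)\}$ directly and symmetrise, whereas the paper obtains ``$\le$'' in~(\ref{dist.cmf.haus.b}) from part~(\ref{dist.cmf.haus.a}) together with \Prp{alt.dist.cmf} and proves ``$\ge$'' by a case analysis on an interval witnessing $r<\acute\delta(\alpha_1,\alpha_2)$ --- essentially your two cases $\alpha_1^*(I)=\infty$ and $\alpha_1^*(I)<\infty$. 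One detail worth noting: in the finite case the paper claims one may choose a single $\lambda\in I$ with $\alpha_2^*(\closedBall_r(\lambda))<\alpha_1^*(\{\lambda\})$, which is not valid in general because the balls around distinct discrete points may overlap; the correct move is to test the whole finite set $F=I\cap\dissupp\alpha_1$, exactly as you do.

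The step you flag as the genuine obstacle is not a defect of your argument: the discrete case of~\eqref{eq:dist.cmf.haus.a} is false as stated, and your diagnosis of why is exactly right. Take $\alpha_1(0)=1$, $\alpha_1(10)=\infty$, and $\alpha_2(0)=\alpha_2(10)=\infty$, $\alpha_2(10.5)=1$, all other values being $0$ (realised, e.g., by the diagonal operators $R_1=\mathrm{diag}(0,10,10,\dots)$ and $R_2=\mathrm{diag}(10.5,0,10,0,10,\dots)$). Then $\acute\delta(\alpha_1,\alpha_2)=0$, since every point of $\supp\alpha_1$ sits on essential mass of $\alpha_2$, while $\acute\delta(\alpha_2,\alpha_1)=10$, since the infinite mass of $\alpha_2$ near $0$ can only be matched by the essential point $10$ of $\alpha_1$; hence $\delta(\alpha_1,\alpha_2)=10$, but $\dHaus(\dissupp\alpha_1,\dissupp\alpha_2)=\dHaus(\{0\},\{10.5\})=10.5$. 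The paper's own proof treats all three choices of $\bullet$ uniformly and deduces $\alpha_2^*(\closedBall_\eps(I))=0$ from $\closedBall_\eps(I)\cap\supp[\bullet]\alpha_2=\emptyset$; for $\bullet=\disc$ this ignores possible essential mass of $\alpha_2$ inside the ball, which is precisely the loophole you identified and which the example exploits. (For $\bullet=\ess$ the displayed identity is also literally wrong --- the ball may still carry finitely many discrete points --- but the conclusion survives there because $\alpha_1^*(I)=\infty$.) The rest of the paper is consistent with this: $\dSpec$ in \Thm{main-1} is built from the finite-set, multiplicity-respecting distance $\dDisc$ (i.e.\ $\dCmffin$) rather than from the Hausdorff distance of the discrete spectra. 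So the correct version of~(\ref{dist.cmf.haus.a}) is the one you actually proved: the cases $\bullet\in\{\mathrm{ess},\emptyset\}$, supplemented by $\dCmffin(\alpha_1,\alpha_2)\le\delta(\alpha_1,\alpha_2)$.
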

\begin{proof}
  Set $\Sigma_{n,\bullet} := \supp[\bullet] \alpha_n$.  For the first
  assertion~\eqref{eq:dist.cmf.haus.a}, let $\eps \in \R$ such that
  $0<\eps <\dHaus(\Sigma_{1,\bullet},\Sigma_{2,\bullet})$. Let
  $\eps'>0$ such that
  $\eps < \eps' < \dHaus(\Sigma_{1,\bullet},\Sigma_{2,\bullet})$.  By
  the characterisation~\eqref{eq:eq.hausdorff}, at least one of the
  conditions for $\eps'$ is not fulfilled, say the first one, i.e.,
  $\Sigma_{1,\bullet} \nsubseteq
  \closedBall_{\eps'}(\Sigma_{2,\bullet})$.  Let
  $\lambda \in \Sigma_{1,\bullet}$ such that
  $\lambda \notin \closedBall_{\eps'}(\Sigma_{2,\bullet})$, i.e,
  $\dist(\lambda,\Sigma_{2,\bullet})\ge \eps'$.  Let
  $I:=\openBall_{\eps'-\eps}(\lambda)$.  As $\eps'>\eps$, we have
  $I \neq \emptyset$.  Moreover,
  $\closedBall_\eps(I)=\closedBall_{\eps'}(\lambda)$ and we have
  $\closedBall_\eps(I) \cap \Sigma_{2,\bullet}=\emptyset$.  In
  particular,
  $\closedBall_\eps(I) \cap \Sigma_{2,\bullet} = \emptyset$ and
  $\alpha_2^*(\closedBall_\eps(I))=0$.  As
  $\lambda \in I \cap \Sigma_{1,\bullet}$, we have
  $\alpha_1^*(I) = \infty$ for $\bullet=\mathrm{ess}$ resp.\
  $\alpha_1^*(I) \ge 1$ for $\bullet \in \{\mathrm{disc},\emptyset\}$,
  hence the first condition in the
  characterisation~\eqref{eq:delta.cr.mult.e} of
  $\delta(\alpha_1,\alpha_2)$ is not fulfilled, i.e.,
  $\eps < \delta(\alpha_1,\alpha_2)$, showing the desired inequality.

  For the second assertion~\eqref{eq:dist.cmf.haus.b} note that
  ``$\le$'' holds because of~\eqref{eq:dist.cmf.haus.a} and
  \Prp{alt.dist.cmf}.  In order to show ``$\ge$'' let
  $r<\delta(\alpha_1,\alpha_2)$.  Without loss of generality, assume
  that $\delta(\alpha_1,\alpha_2)=\acute \delta(\alpha_1,\alpha_2)$,
  then there is an open interval $I$ such that
  \begin{equation*}
    \alpha_2^*(\closedBall_r(I))
    < \alpha_1^*(I).
  \end{equation*}
  \begin{itemize}
  \item If $\alpha_1^*(I)=\infty$, then
    $I \cap \Sigma_{1,\mathrm{ess}} \ne \emptyset$.  For the essential
    support of $\alpha_2$, this inequality means that
    \begin{equation*}
      0=\alpha_2^*(\closedBall_r(I)\cap \Sigma_{2,\mathrm{ess}})
      <\alpha_1^*(I)
      =\infty.
    \end{equation*}
    In particular, there is
    $\lambda_0 \in I \cap \Sigma_{1,\mathrm{ess}}$; and, as
    $\closedBall_r(I) \cap \Sigma_{2,\mathrm{ess}}=\emptyset$, we have
    $d(\lambda_0,\Sigma_{2,\mathrm{ess}})>r$ and hence
    $\lambda_0 \notin \closedBall_r(\Sigma_{2,\mathrm{ess}})$.  In particular,
    the first condition in the
    characterisation~\eqref{eq:eq.hausdorff} of the Hausdorff distance
    is violated for $\eps=r$, hence
    $r<\dHaus(\Sigma_{1,\mathrm{ess}},\Sigma_{2,\mathrm{ess}})$.  As
    $r<\delta(\alpha_2,\alpha_2)$ was
    arbitrary, we have shown
    $\dHaus(\Sigma_{1,\mathrm{ess}},\Sigma_{2,\mathrm{ess}}) \ge
    \delta(\alpha_1,\alpha_2)$.
  \item If $0<\alpha_1^*(I)<\infty$, then we may choose
    $\lambda \in I$ such that
    $\alpha_2^*(\closedBall_r(\lambda))<\alpha_1^*(\{\lambda\})$.  In
    particular, $r<\dCmffiN(\alpha_1,\alpha_2)$, as the
    inequality in~\eqref{eq:char.cmffin} is not fulfilled for the
    finite set $F=\{\lambda\}$.  Hence , we have shown the inequality
    $\dCmffiN(\alpha_1,\alpha_2) \ge \acute
    \delta(\alpha_1,\alpha_2)$.
  \end{itemize}
  The last assertion~\eqref{eq:dist.cmf.haus.c} follows
  from~\eqref{eq:dist.cmf.haus.b} and the fact that
  $\dCmffin(\alpha_1,\alpha_2)=0$ if $\alpha_1$ and $\alpha_2$ are
  both purely essential.
\end{proof}

We end with a naive approach how to measure the distance between two
crude multiplicity function, which is true only in a special case:
\begin{lemma}
  \label{lem:pointwise}
  Let $\alpha_n$ be two crude multiplicity functions ($n=1,2$).  For
  $r>0$, we set $\alpha_2^r(\lambda):= \alpha_2(\closedBall_r(I))$.
  Now, \itemref{pointwise.a}, \itemref{pointwise.b} or
  \itemref{pointwise.c} implies~\itemref{pointwise.d}, where
  \begin{enumerate}
  \item
    \label{pointwise.a}
    for all open intervals $I \subset \R$ we have
    $\alpha_1^*(I) \le \alpha_2^*(\closedBall_r(I))$ (closed
    $r$-neighbourhood);
  \item
    \label{pointwise.b}
    for all open intervals $I \subset \R$ we have
    $\alpha_1^*(I) \le \alpha_2^*(\openBall_r(I))$ (open
    $r$-neighbourhood);
  \item
    \label{pointwise.c}
    for all finite sets $F \subset \R$ we have
    $\alpha_1^*(F) \le \alpha_2^*(\closedBall_r(F))$;
  \item
    \label{pointwise.d}
    for all $\lambda \in \R$ we have
    $\alpha_1(\lambda) \le \alpha_2^r(\lambda)$.
  \end{enumerate}
  If $\alpha_1$ is purely essential ($\alpha_1(\R) =\{0,\infty\}$),
  then the converse statements are also true, i.e., all
  statements~\itemref{pointwise.a}--\itemref{pointwise.d} are
  equivalent.
\end{lemma}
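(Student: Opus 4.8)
The plan is to handle the three implications to \itemref{pointwise.d} one at a time and then, under the purely essential hypothesis, to reverse each of them. Throughout I would use $\alpha_1(\lambda)=\alpha_1^*(\{\lambda\})$ and $\alpha_2(\lambda)=\alpha_2^*(\{\lambda\})$ from \Prp{cmf}, and read the definition as $\alpha_2^r(\lambda)=\alpha_2^*(\closedBall_r(\lambda))$. The implication \itemref{pointwise.c}$\implies$\itemref{pointwise.d} is immediate: applying the hypothesis to the singleton $F=\{\lambda\}$ and using $\closedBall_r(\{\lambda\})=\closedBall_r(\lambda)$ yields $\alpha_1(\lambda)=\alpha_1^*(\{\lambda\})\le\alpha_2^*(\closedBall_r(\lambda))=\alpha_2^r(\lambda)$.

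The genuinely delicate step is \itemref{pointwise.a}$\implies$\itemref{pointwise.d} (and likewise \itemref{pointwise.b}$\implies$\itemref{pointwise.d}): the hypotheses only control neighbourhoods of open intervals, whereas \itemref{pointwise.d} demands the exact radius $r$ about $\lambda$, and a naive continuity-from-above passage from the $(r+\delta)$-ball to the $r$-ball could fail once the $\NbarO$-valued measure takes the value $\infty$. The way around this is to exploit compactness of $\closedBall_r(\lambda)$: by \Prpenum{cmf2}{cmf.a} there is an open $U\supseteq\closedBall_r(\lambda)$ with $\alpha_2^*(U)=\alpha_2^*(\closedBall_r(\lambda))$, and by compactness $\closedBall_{r+\delta_0}(\lambda)\subseteq U$ for some $\delta_0>0$, so monotonicity squeezes $\alpha_2^*(\closedBall_{r+\delta_0}(\lambda))=\alpha_2^*(\closedBall_r(\lambda))$ (and the same for $\openBall_{r+\delta_0}(\lambda)$). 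I would then apply the hypothesis to the single interval $I=\openBall_{\delta_0}(\lambda)$, noting $\closedBall_r(\openBall_{\delta_0}(\lambda))=\closedBall_{r+\delta_0}(\lambda)$ and $\openBall_r(\openBall_{\delta_0}(\lambda))=\openBall_{r+\delta_0}(\lambda)$, together with $\alpha_1(\lambda)=\alpha_1^*(\{\lambda\})\le\alpha_1^*(\openBall_{\delta_0}(\lambda))$; the resulting chain terminates at $\alpha_2^*(\closedBall_r(\lambda))=\alpha_2^r(\lambda)$ in both cases. This single-$\delta_0$ reduction via \Prpenum{cmf2}{cmf.a} is the heart of the matter.

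For the converse under the purely essential assumption, $\alpha_1$ takes only the values $0$ and $\infty$, which I expect to make everything collapse. To recover \itemref{pointwise.a}--\itemref{pointwise.c} from \itemref{pointwise.d}, I would argue that whenever $\alpha_1^*(I)>0$ for an open interval $I$ (or $\alpha_1^*(F)>0$ for a finite set $F$), there is a witness $\mu\in I$ with $\alpha_1(\mu)=\infty$, so in fact $\alpha_1^*(I)=\infty$. Then \itemref{pointwise.d} forces $\alpha_2^*(\closedBall_r(\mu))=\infty$, and since $\mu\in I$ gives $\closedBall_r(\mu)\subseteq\closedBall_r(I)$ while openness of $I$ gives $\openBall_\eta(\mu)\subseteq I$ for some small $\eta>0$, hence $\closedBall_r(\mu)\subseteq\openBall_{r+\eta}(\mu)\subseteq\openBall_r(I)$, monotonicity yields $\alpha_2^*(\closedBall_r(I))=\alpha_2^*(\openBall_r(I))=\infty=\alpha_1^*(I)$; the case $\alpha_1^*(I)=0$ is trivial and the finite-set case is identical with $F$ in place of $I$. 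Thus \itemref{pointwise.a}--\itemref{pointwise.d} all become equivalent in the purely essential case. I would also remark that the converse genuinely needs purely essential $\alpha_1$: in general an interval may contain several discrete points of $\alpha_1$ whose $r$-balls overlap, so the pointwise bounds of \itemref{pointwise.d} cannot be summed to recover \itemref{pointwise.a}.
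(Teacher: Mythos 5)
Your proposal is correct, and it coincides with the paper's proof in the easy parts: \itemref{pointwise.c}$\implies$\itemref{pointwise.d} via the singleton $F=\{\lambda\}$, and the converse in the purely essential case by locating a point $\mu$ with $\alpha_1(\mu)=\infty$ and using monotonicity of $\alpha_2^*$. Where you genuinely diverge is in \itemref{pointwise.a}$\implies$\itemref{pointwise.d} (and the analogue for \itemref{pointwise.b}): the paper applies the hypothesis to the shrinking intervals $\openBall_\eps(\lambda)$ and passes to the limit $\eps\to 0$, implicitly invoking continuity from above of the $\NbarO$-valued measure along the decreasing family $\closedBall_{r+\eps}(\lambda)\searrow\closedBall_r(\lambda)$ --- a step that needs a word of justification since continuity from above can fail for measures taking the value $\infty$ (here it is saved by the fact that accumulation of support points forces an essential point, but the paper does not say this). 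Your route instead uses \Prpenum{cmf2}{cmf.a} (the infimum over open supersets is attained, because $\NbarO$ is well-ordered) together with compactness of $\closedBall_r(\lambda)$ to produce a single $\delta_0>0$ with $\alpha_2^*(\closedBall_{r+\delta_0}(\lambda))=\alpha_2^*(\closedBall_r(\lambda))$, and then applies the hypothesis once to $I=\openBall_{\delta_0}(\lambda)$. This sidesteps the limiting argument entirely and is, if anything, the more careful of the two; the price is the (mild) reliance on the attained-infimum property. Your closing remark on why the converse needs purely essential $\alpha_1$ matches the paper's explicit counterexample in the remark following the lemma.
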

\begin{proof}
  \itemref{pointwise.a}$\Rightarrow$\itemref{pointwise.d}: follows
  from taking the limit $\eps \to 0$ for
  $I=\closedBall_\eps(\lambda)$, note that
  $\closedBall_\eps(I)=\closedBall_{\eps+\eps'}(\lambda) \nearrow
  \closedBall_\eps(\lambda)$ as well).  The same is true for
  \itemref{pointwise.b}; for
  \itemref{pointwise.c}$\Rightarrow$\itemref{pointwise.d} take
  $F=\{\lambda\}$.

  \itemref{pointwise.d}$\Rightarrow$\itemref{pointwise.a}: If
  $\alpha_1(\lambda)\in \{0,\infty\}$ for all $\lambda \in \R$ only,
  then the only non-trivial case is $\alpha_1^*(I)=\infty$.  Here, we
  have $\lambda \in I$ with $\alpha_1(\lambda)=\infty$ ($\alpha_1$
  does not take any finite value), then (by assumption),
  $\infty\le\alpha_1(\lambda)\le \alpha_2^*(\closedBall_r(\lambda))\ge
  \alpha_2^*(\closedBall_r(I))$, hence~\itemref{pointwise.a} holds.
  The other implications follow similarly.
\end{proof}
\begin{remark}[a minimal counterexample]
  \label{rem:pointwise}
  The converse statement of \Lem{pointwise} is in general false:
  assume that
  \begin{align*}
    \alpha_1(\lambda)=
    \begin{cases}
      2, & \lambda \in \{2,4\},\\
      0, & \text{otherwise},
    \end{cases}
    \quadtext{and}
    \alpha_2(\lambda)=
    \begin{cases}
      1, & \lambda \in \{1,3,5,\lambda_0\},\\
      0, & \text{otherwise},
    \end{cases}
  \end{align*}
  where $\lambda_0>1$.  Now, it is easily seen that
  \Lemenum{pointwise}{pointwise.d} holds for all $r \ge 1$, hence the
  infimum (minimum) is $r=1$: We only have to check
  $\lambda \in \supp \alpha_1$, and
  $\alpha_1(\lambda)=\alpha_2^r(\lambda)=2$ for $\lambda \in \{2,4\}$
  once $r \ge 1$.  For $\acute \delta(\alpha_1,\alpha_2)=1$, note that
  for any open interval $I$, say $I=(2-\eps,4+\eps)$, we have
  $\alpha_1^*(I)=4$, but $\alpha_2^*(\closedBall_r(I))=4$ if and only
  if $4+\eps+r \ge \lambda_0$; taking all such $\eps>0$ gives
  $4+r \ge \lambda_0$, and hence the infimum of all such $r>0$ is
  $\acute \delta(\alpha_1,\alpha_2)=\lambda_0-4$.  For $\lambda_0>5$,
  we have found a counterexample.  The same argument applies also for
  the finite set characterisation: here, we also have
  $\dCmffiN(\alpha_1,\alpha_2)=\lambda_0-4$.
\end{remark}

\begin{remark}[relation between Hausdorff and \LevyProkhorov
  distance]
  \label{rem:pointwise2}
  The previous lemma allows a more formal way to prove
  \Prpenum{dist.cmf.haus}{dist.cmf.haus.c}: we can reformulate the set
  of $\eps>0$ in~\eqref{eq:eq.hausdorff} by the pointwise inequality
  \begin{align*}
    \1_{\Sigma_1} \le \1_{\closedBall_\eps(\Sigma_2)}
    \quadtext{and}
    \1_{\Sigma_2} \le \1_{\closedBall_\eps(\Sigma_1)}.
  \end{align*}
  Define $\wt \alpha_n = \infty \1_{\Sigma_n}$ (with the convention
  that $\infty \times 0=0$), then $\wt \alpha_n$ are crude
  multiplicity functions for $n=1,2$.  Moreover,
  $\wt \alpha_n^\eps=\infty \1_{\closedBall_\eps (\Sigma_n)}$.  If
  $\alpha_1$ and $\alpha_2$ are purely essential crude multiplicity,
  then the pointwise characterisation \Lemenum{pointwise}{pointwise.d}
  is equivalent with \itemref{pointwise.a}; interchanging $\alpha_1$
  and $\alpha_2$ one can then see that Hausdorff distance of
  $\Sigma_1$ and $\Sigma_2$ is the same as the \LevyProkhorov
  distance $\delta(\alpha_1,\alpha_2)$.
\end{remark}

%
\section{Unitary distance and crude multiplicity functions}
\label{sec:duni.cmf}
%
In this section, we review and adopt briefly the results of Azoff and
Davis~\cite{azoff-davis:84} on the equality of the unitary distance of
two self-adjoint operators with the \LevyProkhorov distance of their
crude multiplicity functions needed for our analysis.  A proof for
normal operators (with some modifications if discrete spectrum appears
(see \Subsec{related.works}) is given in~\cite{davidson:86}.

\subsection{Unitary distance and crude multiplicity functions for
  operators in the same space}
\label{ssec:unitar.dist}

Let $R$, $R_1$, $R_2 \in \Lin \HS$ be bounded and self-adjoint
operators acting in a separable Hilbert space $\HS$.

The group of unitaries $\Unitary \HS$ acts isometrically on the space of
bounded operators $\Lin \HS$ via $U_*(R):=U R U^*$ The corresponding
\emph{unitary orbit} is denoted by
\begin{equation}
  \label{eq:orbit}
  \UnitOrbit R :=\set{U_*(R):=U R U^*}{U \in \Unitary \HS}.
\end{equation}

Let $\alpha$, $\alpha_1$ and $\alpha_2$ be the crude multiplicity
functions of $R$, $R_1$ and $R_2$, respectively (see \Subsec{cmf}).

In order to refer to the corresponding operators of the crude
multiplicity function, we write
\begin{equation}
  \label{eq:def.dcmf}
  \dCmf(R_1,R_2) := \delta(\alpha_1,\alpha_2)
\end{equation}
for the distance of the crude multiplicity functions $\alpha_n$
associated with $R_n$ ($n=1,2$).

We denote the \emph{distance} between the unitary orbits by
\begin{equation}
  \label{eq:dist-unit-orbit}
  \dist\bigl(\UnitOrbit {R_1},\UnitOrbit{R_2}\bigr)
  := \inf\bigset{\norm[\Lin \HS]{U_1R_1U_1^*-U_2R_2U_2^*}}
  {U_1,U_2 \in \Unitary\HS}.
\end{equation}
As the group $\Unitary \HS$ acts isometrically on $\Lin \HS$, we have
\begin{align*}
  \dist(\UnitOrbit{R_1},\UnitOrbit{R_2})=\dist(R_1,\UnitOrbit{R_2}),
\end{align*}
and also
\begin{align*}
  \dHauS(\UnitOrbit{R_1},\UnitOrbit{R_2})
  =\sup_{U_1 \in \Unitary \HS}\dist(U_1R_1U_1^*,\UnitOrbit{R_2})
  =\dist(R_1,\UnitOrbit{R_2}),
\end{align*}
where $\dHauS$ has been defined in~\eqref{eq:hausdorff0}.
  Interchanging
the roles of $R_1$ and $R_2$, one then sees
\begin{equation}
  \dist\bigl(\UnitOrbit {R_1},\UnitOrbit{R_2}\bigr)
  = \dHaus\bigl(\UnitOrbit {R_1},\UnitOrbit{R_2}\bigr),
\end{equation}
where the Hausdorff distance is defined in~\eqref{eq:hausdorff}.
The main result of Azoff and Davis~\cite[Thm.~1.3]{azoff-davis:84} is
now the following:
\begin{proposition}
  \label{prp:azoff-davis}
  Let $R_n \in \Lin \HS$ ($n=1,2$) be two self-adjoint and bounded
  operators with crude multiplicity functions $\alpha_n$ acting in the
  same Hilbert space $\HS$.  Then we have
  \begin{equation}
    \label{eq:azoff-davis}
    \dist(\UnitOrbit {R_1},\UnitOrbit{R_2})
    =\delta(\alpha_1,\alpha_2).
  \end{equation}
\end{proposition}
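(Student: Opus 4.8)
The plan is to establish the two inequalities separately, working with the equivalent expression
\[
  \dist(\UnitOrbit{R_1},\UnitOrbit{R_2})
  =\inf\bigset{\norm[\Lin\HS]{R_1-UR_2U^*}}{U\in\Unitary\HS}
\]
derived above, and exploiting that the crude multiplicity function is a unitary invariant: since $\1_B(UR_2U^*)=U\1_B(R_2)U^*$ we have $\rank\1_B(UR_2U^*)=\rank\1_B(R_2)$, so $\alpha_{UR_2U^*}=\alpha_2$ for every unitary $U$. Thus I only ever compare the crude multiplicity data of $R_1$ with that of an arbitrary $R_2':=UR_2U^*$ in the orbit, both seen through the identity $\alpha_R^*(I)=\rank\1_I(R)$ for open intervals $I$ from \Prpenum{char.cmf}{char.cmf.c}.

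For the inequality $\delta(\alpha_1,\alpha_2)\le\dist(\UnitOrbit{R_1},\UnitOrbit{R_2})$ the key ingredient is a spectral min--max estimate: if $R_1,R_2'$ are self-adjoint with $\norm{R_1-R_2'}\le\eps$, then for every open interval $I=(c-r,c+r)$ and every $\eps'>\eps$ one has $\rank\1_I(R_1)\le\rank\1_{\openBall_{\eps'}(I)}(R_2')$. I would prove this by dimension counting: writing $m:=\rank\1_I(R_1)$ and $P:=\1_{\openBall_{\eps'}(I)}(R_2')$, if $\rank P<m$ then $P$ restricted to $\ran\1_I(R_1)$ has a nonzero kernel vector $u\in\ran\1_I(R_1)\cap\ran\1_{\R\setminus\openBall_{\eps'}(I)}(R_2')$, and the spectral theorem yields $\norm{(R_1-c)u}\le r\norm u$ and $\norm{(R_2'-c)u}\ge(r+\eps')\norm u$, which together with $\norm{(R_1-R_2')u}\le\eps\norm u$ force $\eps'\le\eps$, a contradiction. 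Translating via $\alpha_{R_2'}=\alpha_2$ and \Prpenum{char.cmf}{char.cmf.c}, this says $\alpha_1^*(I)\le\alpha_2^*(\closedBall_{\eps'}(I))$, and the symmetric estimate (roles of $R_1,R_2'$ exchanged) gives $\alpha_2^*(I)\le\alpha_1^*(\closedBall_{\eps'}(I))$, for all $I\in\OpInt$ and all $\eps'>\eps$. By the characterisation~\eqref{eq:delta.cr.mult.e} this forces $\delta(\alpha_1,\alpha_2)\le\eps'$ for every $\eps'>\eps$, hence $\delta(\alpha_1,\alpha_2)\le\norm{R_1-R_2'}$; taking the infimum over $U$ yields the claim.

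The reverse inequality $\dist(\UnitOrbit{R_1},\UnitOrbit{R_2})\le\delta(\alpha_1,\alpha_2)$ is, I expect, the main obstacle, and I would prove it constructively. Fix $\eps>\delta(\alpha_1,\alpha_2)$ and a small $\eta>0$, partition $\R$ into consecutive half-open intervals $J_k$ of length $\eta$, and replace $R_n$ by the diagonal operator $\hat R_n:=\sum_k t_k\1_{J_k}(R_n)$ with $t_k\in J_k$, so that $\norm{R_n-\hat R_n}\le\eta$ and $\hat R_n$ has the isolated eigenvalue $t_k$ of multiplicity $\rank\1_{J_k}(R_n)$. The strict L\'evy--Prokhorov inequalities from~\eqref{eq:delta.cr.mult.e} then control the eigenvalue multiplicities of $\hat R_1$ against those of $\hat R_2$ across $\eps$-neighbourhoods. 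The heart of the argument is a \emph{matching}: one must produce a bijection between the eigenvalue slots of $\hat R_2$ and those of $\hat R_1$ (counted with multiplicity) displacing each slot by at most $\eps+2\eta$. This is a transport (assignment) problem with uniform displacement cost, whose solvability is exactly guaranteed by these inequalities through an infinite version of Hall's marriage theorem (equivalently, max-flow/min-cut); the infinite multiplicities on the essential support are absorbed by the trivial bound $\infty\le\infty$.

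Given such a matching I would define $U$ by sending matched eigenvectors of $\hat R_2$ to those of $\hat R_1$; this is unitary with $\norm{\hat R_1-U\hat R_2U^*}\le\eps+2\eta$, whence
\[
  \norm{R_1-UR_2U^*}
  \le\norm{R_1-\hat R_1}+\norm{\hat R_1-U\hat R_2U^*}
    +\norm{U(\hat R_2-R_2)U^*}
  \le\eps+4\eta.
\]
Letting $\eta\to0$ gives $\dist(\UnitOrbit{R_1},\UnitOrbit{R_2})\le\eps$, and $\eps\downarrow\delta(\alpha_1,\alpha_2)$ completes the proof. The delicate points are the open/closed-neighbourhood bookkeeping, so that the discretisation error $\eta$ genuinely absorbs the gap between the strict inequality $\delta(\alpha_1,\alpha_2)<\eps$ and the combinatorial matching condition, and the rigorous application of the infinite marriage theorem when finitely and infinitely many slots, and finite and infinite multiplicities, occur simultaneously.
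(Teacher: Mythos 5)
The paper does not prove this proposition at all: it is quoted verbatim from Azoff--Davis \cite[Thm.~1.3]{azoff-davis:84}, with the remark that the inequality ``$\ge$'' is easy and that ``$\le$'' requires an infinite generalisation of the marriage theorem. Your proposal reconstructs exactly this two-part strategy, so there is no divergence in approach to report; the question is only how much of it you actually establish.

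Your first half (showing $\delta(\alpha_1,\alpha_2)\le\dist(\UnitOrbit{R_1},\UnitOrbit{R_2})$) is correct and essentially complete: the dimension-counting argument producing $u\in\ran\1_I(R_1)\cap\ran\1_{\R\setminus\openBall_{\eps'}(I)}(R_2')$ when $\rank\1_{\openBall_{\eps'}(I)}(R_2')<\rank\1_I(R_1)$, and the resulting contradiction $\eps'\le\eps$, is the standard proof and works also when the ranks involved are infinite. The second half, however, is an outline rather than a proof, and the gap sits precisely where the paper warns the difficulty lies. After discretising to $\hat R_n$, you assert that the L\'evy--Prokhorov inequalities ``exactly guarantee'' a bijective matching of eigenvalue slots with displacement at most $\eps+2\eta$ via an infinite Hall theorem. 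But (i) Hall's condition must be verified for \emph{arbitrary} finite unions of slots, not just single intervals, and the $\eps$-neighbourhoods of such unions overlap, so passing from the interval inequalities of~\eqref{eq:delta.cr.mult.e} to the Hall condition is itself a nontrivial combinatorial step; (ii) Hall's theorem in either direction only yields injections, and upgrading two injections to a single \emph{bijection} (needed to get a unitary on all of $\HS$) requires a K\H{o}nig/Cantor--Schr\"oder--Bernstein argument for matchings; and (iii) when a slot of infinite multiplicity (essential support) is matched against finitely many finite-multiplicity slots, or vice versa, a cardinality-preserving bijection of eigenvectors does not exist without redistributing multiplicities, which is where the genuinely ``more advanced techniques'' of \cite[Sec.~4]{azoff-davis:84} enter. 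As written, the hard inequality is therefore not established; you would either need to carry out that matching argument in full or, as the paper does, cite it.
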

The proof of ``$\ge$'' (\cite[Prp.~2.3]{azoff-davis:84}) is rather
easy; while ``$\le$'' (\cite[Sec.~4]{azoff-davis:84}) needs more
advanced techniques including a generalisation of the marriage theorem
to infinite sets.

\subsection{Unitary distance zero}
\label{ssec:duni.zero}

It is an interesting fact of operator theory that the unitary orbit is
(in general) \emph{not} closed in the operator topology, i.e., if
$R_2$ is in the closure of $\UnitOrbit {R_1}$, or equivalently,
$\dUni(R,\wt R)=0$, then $\wt R$ is (in general) \emph{not} unitarily
equivalent with $R$ (i.e., there is not always a $U \in \Unitary \HS$
such that $\wt R=URU^*$, but only a sequence of unitary operators
$U_n \in \Unitary \HS$ with
$\norm[\Lin \HS]{U_nR U_n^* - \wt R} \to 0$.  Nevertheless, from
\Thm{azoff-davis-duni} and \Prpenum{char.cmf}{char.cmf.b} we see that
$\wt R$ and $R$ have the same discrete and essential spectrum.  In
particular, $R$ and $\wt R$ might have different (essential) spectral
type: let $\HS_n=\Lsqr{[0,1],\mu_n}$ for $n \in \{1,2,3\}$, where
$\mu_1$ is the Lebesgue measure $\mu_1$ and $\mu_2$ a measure singular
with respect to $\mu_1$ and without atoms (a singular continuous
measure).  Finally, let $(\lambda_k)_{k \in \N}$ be an enumeration of
$[0,1] \cap \Q$ and define
$\mu_3(B)=\sum_{k \in \N, \lambda_k \in B} 2^{-k}$.  Let
$(R_nf_n)(\lambda)=\lambda f_n(\lambda)$ for $f_n \in \HS_n$ be the
multiplication operator.  One can now see that
$\spec{R_n}=\essspec{R_n}=[0,1]$ for $n=1,2,3$.  In particular, by
\Thm{main-1}, $\dUni(R_m,R_n)=0$ for all $m,n \in \{1,2,3\}$, but the
operators $R_n$ are mutually non-unitarily equivalent.  A proof and
more interesting examples of unitarily (non-)equivalent multiplication
operators can be found in~\cite{abrahamse:78}.

The case when $\UnitOrbit R$ is closed (for separable Hilbert spaces
$\HS$) can be characterised by saying that $\spec R$ (as a set, not
multiset) is finite (see~\cite[Prp.~3.5]{azoff-davis:84}), or in terms
of operator algebras, if and only if the $C^*$-algebra generated by
$R$ is finite dimensional, see~\cite[Prp.~2.4]{voiculescu:76}
and~\cite[Sec.~8]{sherman:07}.  In particular, each
$\lambda \in \spec R$ is an isolated eigenvalue (with arbitrary
multiplicity).

\subsection{Unitary distance and crude multiplicity functions for
  operators in different spaces}
\label{ssec:unitar.dist.diff}

Probably the most canonical way to compare operators acting on
different Hilbert spaces is to allow unitary mappings between the
spaces.

Here and in the sequel we consider two bounded operators
$R_n \in \Lin{\HS_n}$ ($n=1,2$).  Their \emph{unitary distance}
$\dUni(R_1,R_2)$ was defined in~\eqref{eq:uni-dist}.  Recall and note
that
\begin{align}
  \nonumber
  \dUni(R_1,R_2)
  :=& \inf \bigset{\norm[\Lin {\HS_2}]{U R_1U ^*-R_2}}
      {U \in \Unitary{\HS_1,\HS_2}}\\
  \nonumber
  =& \inf  \bigset{\norm[\Lin \HS]{U_1R_1U_1^*-U_2R_2U_2^*}}
     {U_n \in \Unitary{\HS_n,\HS}, \text{$\HS$ Hilbert space}}\\
  \label{eq:duni.unit.dist}
  =& \dist(\UnitOrbit{U_{12}R_1U_{12}^*},\UnitOrbit{R_2})
\end{align}
for any unitary $U_{12} \in \Unitary{\HS_1,\HS_2}$.  The case
$\dUni(R_1,R_2)=0$ is discussed in \Subsec{duni.zero}.

The following fact is easy to see (cf.\
e.g.~\cite[Prp.~2.2]{post-simmer:20b}:
\begin{proposition}
  \label{prp:duni.pseudo-metric}
  $\dUni$ is non-negative, symmetric and fulfils the triangle
  inequality.  Moreover, we have $\dUni(R_1^*,R_2^*)=\dUni(R_1,R_2)$.
\end{proposition}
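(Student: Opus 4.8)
The plan is to reduce everything to two elementary properties of the operator norm together with the group structure of the unitaries. The two facts are: (i) the norm is invariant under conjugation by a unitary, i.e.\ $\norm[\Lin \HSaux]{W T W^*} = \norm[\Lin \HS]T$ whenever $\map W \HS \HSaux$ is unitary and $T \in \Lin \HS$; and (ii) $\norm{T^*} = \norm T$. Throughout one keeps the convention $\dUni(R_1,R_2)=\infty$ whenever $\Unitary{\HS_1,\HS_2}=\emptyset$ (i.e.\ $\HS_1$ and $\HS_2$ have different dimension), and one checks at each step that this convention is consistent, since the relevant composition or inversion of unitaries exists precisely when the dimensions match.

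Non-negativity is immediate, as $\dUni(R_1,R_2)$ is an infimum of the non-negative quantities $\norm[\Lin {\HS_2}]{U R_1 U^*-R_2}$ (with the value $\infty$ in the empty case). For symmetry I would use that $U \mapsto U^*$ is a bijection from $\Unitary{\HS_1,\HS_2}$ onto $\Unitary{\HS_2,\HS_1}$. Given $U \in \Unitary{\HS_1,\HS_2}$, fact~(i) applied with $W=U^*$ gives $\norm[\Lin {\HS_2}]{U R_1 U^*-R_2} = \norm[\Lin {\HS_1}]{R_1-U^* R_2 U} = \norm[\Lin {\HS_1}]{U^* R_2 U - R_1}$, so the two sets of values over which the infima for $\dUni(R_1,R_2)$ and $\dUni(R_2,R_1)$ are taken coincide, whence equality.

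For the triangle inequality $\dUni(R_1,R_3) \le \dUni(R_1,R_2)+\dUni(R_2,R_3)$ I would first dispose of the case where the right-hand side is $\infty$ (then there is nothing to prove, and one checks the dimension bookkeeping to see that the left-hand side is also $\infty$ whenever it must be). Otherwise all three spaces have equal dimension; given $U \in \Unitary{\HS_1,\HS_2}$ and $V \in \Unitary{\HS_2,\HS_3}$ the composite $VU \in \Unitary{\HS_1,\HS_3}$ is a competitor, and inserting $\pm V R_2 V^*$ together with the triangle inequality for $\norm\cdot$ and fact~(i) gives $\norm[\Lin {\HS_3}]{(VU) R_1 (VU)^*-R_3} \le \norm[\Lin {\HS_2}]{U R_1 U^*-R_2} + \norm[\Lin {\HS_3}]{V R_2 V^*-R_3}$. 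Taking the infimum over $U$ and $V$ separately yields the claim. Finally, adjoint invariance follows from $(U R_1 U^*-R_2)^* = U R_1^* U^*-R_2^*$ together with fact~(ii), which shows the two infima defining $\dUni(R_1,R_2)$ and $\dUni(R_1^*,R_2^*)$ range over the same values. There is no real obstacle here: the only point requiring a little care is the consistent treatment of the $\infty$ convention in the dimension-mismatch case, which I would verify explicitly rather than leave implicit.
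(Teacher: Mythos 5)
Your argument is correct and is exactly the standard one the paper has in mind: the paper states this proposition without proof, merely citing \cite[Prp.~2.2]{post-simmer:20b}, and your use of unitary invariance of the operator norm, the bijection $U \mapsto U^*$, composition of unitaries with an inserted $\pm V R_2 V^*$ term, and $\norm{T^*}=\norm{T}$ is precisely the expected elementary verification. Your explicit care with the $\infty$ convention when dimensions mismatch is a welcome detail that the paper leaves implicit.
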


Our first main result is now a simple consequence of Azoff and Davis'
result (see \Prp{azoff-davis}) and \eqref{eq:duni.unit.dist}).  Note
also, that the crude multiplicity functions of $R_1$ and $UR_1U^*$
agree for any unitary operator $U \in \Unitary{\HS_1,\HS_2}$.
\begin{theorem}[{based on \cite[Thm.~1.3]{azoff-davis:84}}]
  \label{thm:azoff-davis-duni}
  Let $R_n \in \Lin{\HS_n}$ ($n=1,2$) be two bounded and self-adjoint
  operators with related multiplicity function $\alpha_n$, then we
  have
  \begin{equation*}
    \dUni(R_1,R_2)
    = \delta(\alpha_1, \alpha_2)
    \;(=: \dCmf(R_1,R_2)).
  \end{equation*}
\end{theorem}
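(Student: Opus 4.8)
The plan is to reduce the different-space problem to the same-space situation already settled by Azoff and Davis (\Prp{azoff-davis}), using the reformulation \eqref{eq:duni.unit.dist} of the unitary distance as a distance between unitary orbits, together with the unitary invariance of the crude multiplicity function (\Lem{cmf.same}).

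First I would dispose of the degenerate case in which $\HS_1$ and $\HS_2$ have different dimensions. Here $\dUni(R_1,R_2)=\infty$ by definition, so it suffices to check that $\delta(\alpha_1,\alpha_2)=\infty$ as well. Since both operators are bounded, their spectra are compact, and by \Prpenum{char.cmf}{char.cmf.c} the associated measure records the full multiplicity: taking any open interval $I\supset\spec{R_n}$ gives $\alpha_n^*(I)=\dim\HS_n$ (possibly $\infty$). Assuming without loss of generality $\dim\HS_1<\dim\HS_2$ (the distance $\delta$ is symmetric), for a large open interval $I\supset\spec{R_2}$ the total mass $\alpha_2^*(I)=\dim\HS_2$ strictly exceeds $\alpha_1^*(\closedBall_\eps(I))\le\dim\HS_1$ for every $\eps>0$. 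Hence the defining condition for $\acute\delta(\alpha_2,\alpha_1,I)$ is never met, so $\acute\delta(\alpha_2,\alpha_1)=\infty$ and therefore $\delta(\alpha_1,\alpha_2)=\infty$, as required.

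For the main case $\dim\HS_1=\dim\HS_2$, fix any unitary $U_{12}\in\Unitary{\HS_1,\HS_2}$, which exists precisely because the dimensions agree. By \eqref{eq:duni.unit.dist} we have $\dUni(R_1,R_2)=\dist(\UnitOrbit{U_{12}R_1U_{12}^*},\UnitOrbit{R_2})$, and now $U_{12}R_1U_{12}^*$ and $R_2$ are both bounded self-adjoint operators acting in the \emph{same} space $\HS_2$. \Prp{azoff-davis} then yields $\dist(\UnitOrbit{U_{12}R_1U_{12}^*},\UnitOrbit{R_2})=\delta(\wt\alpha_1,\alpha_2)$, where $\wt\alpha_1$ denotes the crude multiplicity function of $U_{12}R_1U_{12}^*$. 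Finally, \Lem{cmf.same} identifies $\wt\alpha_1$ with $\alpha_1$, since a unitarily equivalent spectral projection has the same rank. Chaining these equalities gives $\dUni(R_1,R_2)=\delta(\alpha_1,\alpha_2)$.

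Once the three cited ingredients are in place the argument is essentially bookkeeping; the only step requiring genuine care is the dimension-mismatch case, where one must confirm that the L\'evy--Prokhorov distance is indeed infinite. This hinges on the fact that $\alpha_n^*$ carries the full multiplicity (total mass $\dim\HS_n$), so that a deficit in dimension cannot be absorbed by enlarging neighbourhoods. The genuinely hard part, namely the nontrivial inequality $\le$ in \Prp{azoff-davis} (which relies on an infinite version of the marriage theorem), is imported wholesale and not reproved.
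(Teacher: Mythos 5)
Your proposal is correct and follows exactly the paper's route: the paper derives this theorem as an immediate consequence of \eqref{eq:duni.unit.dist}, \Prp{azoff-davis} and \Lem{cmf.same}, precisely the three ingredients you chain together. Your explicit treatment of the dimension-mismatch case (where both sides are $\infty$, since the total masses $\alpha_n^*(\R)=\dim\HS_n$ differ and so $\acute\delta$ is an infimum over the empty set) is a detail the paper leaves implicit, and your argument for it is sound.
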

This result also shows that $\dCmf$ (or $\delta$) fulfils the triangle
inequality using \Prp{duni.pseudo-metric}.

We now give an alternative proof of \Thm{main-1} in terms of crude
multiplicity functions:
\begin{proof}[Proof of \Thm{main-1}]
  The statements follow from \Thm{azoff-davis-duni} and
  \Prp{dist.cmf.haus}.  Note that
  $\dDisc(R_1,R_2)=\dCmffin(\alpha_{R_1},\alpha_{R_2})$, where the
  first is defined in~\eqref{eq:dmult} while the latter
  in~\eqref{eq:delta.cr.mult.d} using again~\eqref{eq:inf.cap}.
\end{proof}

%
\section{Isometric and unitary distance}
\label{sec:diso.duni}
%

In this section, we prove \Thm{main-2}.

\subsection{Isometric distance}
\label{ssec:weidmann.dist}
The isometric distance $\dIso$ defined in~\eqref{eq:iso-dist}
generalises a concept of Weidmann (\cite[Sec.~9.3]{weidmann:00}) for
operators acting in different Hilbert spaces.  For a detailed
discussion, we refer to~\cite{post-zimmer:22} and references therein.

We show the triangle inequality for $\dIso$ separately.  In the
special case that $0$ is in the essential spectrum of $R_1$ and $R_2$,
it follows already from the triangle inequality for $\dUni$
(\Prp{duni.pseudo-metric}) and \Thm{main-2}:
\begin{proposition}
  \label{prp:diso.pseudo-metric}
  $\dIso$ is non-negative, symmetric and fulfils the triangle
  inequality.  Moreover, we have $\dIso(R_1^*,R_2^*)=\dIso(R_1,R_2)$.
\end{proposition}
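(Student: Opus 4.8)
The plan is to settle non-negativity, symmetry, and the adjoint identity directly from the definition, and to reserve the actual work for the triangle inequality, which will require an amalgamation of the two intermediate Hilbert spaces. Non-negativity is immediate since $\dIso$ is an infimum of operator norms. For symmetry, given isometries $\iota_n \in \Iso{\HS_n,\HS}$ admissible for the pair $(R_1,R_2)$, the same two isometries with their roles exchanged are admissible for $(R_2,R_1)$ and yield $D_{\HS,\iota_2,\iota_1}(R_2,R_1) = -D_{\HS,\iota_1,\iota_2}(R_1,R_2)$, whose norm is unchanged; taking infima gives $\dIso(R_1,R_2) = \dIso(R_2,R_1)$. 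For the adjoint identity, one observes $(\iota_n R_n \iota_n^*)^* = \iota_n R_n^* \iota_n^*$, so that $D_{\HS,\iota_1,\iota_2}(R_1^*,R_2^*) = \bigl(D_{\HS,\iota_1,\iota_2}(R_1,R_2)\bigr)^*$; since $\norm[\Lin\HS]{A^*} = \norm[\Lin\HS]{A}$ and the admissible isometries coincide, $\dIso(R_1^*,R_2^*) = \dIso(R_1,R_2)$.

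For the triangle inequality $\dIso(R_1,R_3) \le \dIso(R_1,R_2) + \dIso(R_2,R_3)$ with $R_m \in \Lin{\HS_m}$ ($m=1,2,3$), I would fix isometries $\iota_1 \in \Iso{\HS_1,\HSaux'}$, $\iota_2' \in \Iso{\HS_2,\HSaux'}$ realising the pair $(R_1,R_2)$ up to $\eta>0$ in a separable space $\HSaux'$, and isometries $\iota_2'' \in \Iso{\HS_2,\HSaux''}$, $\iota_3 \in \Iso{\HS_3,\HSaux''}$ realising $(R_2,R_3)$ up to $\eta$ in $\HSaux''$. The essential step is to glue $\HSaux'$ and $\HSaux''$ along the two copies $\iota_2'(\HS_2) \subset \HSaux'$ and $\iota_2''(\HS_2) \subset \HSaux''$ of $\HS_2$. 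Concretely I set $\HS := \HSaux' \oplus (\HSaux'' \ominus \iota_2''(\HS_2))$ and define a map $W \colon \HSaux'' \to \HS$ by sending $\iota_2''(u)$ to $\iota_2'(u)$ for $u \in \HS_2$ and mapping $\HSaux'' \ominus \iota_2''(\HS_2)$ identically into the second summand. Since $W$ respects an orthogonal decomposition it is an isometry, and by construction $W\iota_2'' = \iota_2'$; as $\HSaux',\HSaux''$ are separable, so is $\HS$.

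Now set $j_1 := \iota_1$ and $j_2 := \iota_2'$ (both mapping into the first summand $\HSaux' \subset \HS$) and $j_3 := W\iota_3$, so that $j_1, j_3$ are genuine isometries of $\HS_1, \HS_3$ into the common space $\HS$. Because $j_1$ and $j_2$ land in $\HSaux'$ with $j_n^* = \iota_n^* P'$ (where $P'$ is the projection onto $\HSaux'$), the operator $j_1 R_1 j_1^* - j_2 R_2 j_2^*$ acts as $D_{\HSaux',\iota_1,\iota_2'}(R_1,R_2)$ on $\HSaux'$ and annihilates its orthogonal complement, so its norm equals $\norm[\Lin{\HSaux'}]{D_{\HSaux',\iota_1,\iota_2'}(R_1,R_2)}$. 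Using $j_2 = W\iota_2''$ and $j_3 = W\iota_3$ one gets $j_2 R_2 j_2^* - j_3 R_3 j_3^* = W\, D_{\HSaux'',\iota_2'',\iota_3}(R_2,R_3)\, W^*$, and since $W$ is an isometry this has norm at most $\norm[\Lin{\HSaux''}]{D_{\HSaux'',\iota_2'',\iota_3}(R_2,R_3)}$. The triangle inequality in $\Lin\HS$ then gives $\norm[\Lin\HS]{j_1 R_1 j_1^* - j_3 R_3 j_3^*} \le \norm[\Lin{\HSaux'}]{D_{\HSaux',\iota_1,\iota_2'}(R_1,R_2)} + \norm[\Lin{\HSaux''}]{D_{\HSaux'',\iota_2'',\iota_3}(R_2,R_3)}$, which bounds $\dIso(R_1,R_3)$ from above. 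Letting both pairs of embeddings approach their respective infima ($\eta \to 0$) yields the claim. I expect the only genuine obstacle to be the amalgamation: one must check that $W$ is well defined and isometric and that the two difference operators decouple as stated. This is precisely the Gromov--Hausdorff-style gluing along the shared middle space that makes $\dIso$ a pseudometric; everything else is a routine norm estimate.
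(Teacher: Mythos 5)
Your proposal is correct and follows essentially the same route as the paper: the non-trivial step is the amalgamation of the two intermediate spaces along the shared copy of $\HS_2$, and your space $\HS=\HSaux'\oplus(\HSaux''\ominus\iota_2''(\HS_2))$ with the gluing isometry $W$ is the same construction as the paper's $\iota_{21}(\HS_2)^\perp\oplus\HS_2\oplus\iota_{23}(\HS_2)^\perp$ up to an obvious unitary identification, with the decoupling of the two difference operators and the adjoint identity handled identically.
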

\begin{proof}
  Positivity, symmetry and $\dIso(R_1,R_1)=0$ are clear by definition.
  Only the triangle inequality remains to be shown: Given
  $R_n \in \Lin{\HS_n}$ ($n=1,2,3$) and $\eps>0$ we have Hilbert
  spaces $\HS_{12}$ and $\HS_{23}$ and isometries
  \begin{align*}
    \map{\iota_{12}}{\HS_1}{\HS_{12}}, \quad
    \map{\iota_{21}}{\HS_2}{\HS_{12}}, \quad
    \map{\iota_{23}}{\HS_2}{\HS_{23}},
    \quad \map{\iota_{32}}{\HS_3}{\HS_{23}},
  \end{align*}
  by the definition of $\dIso(R_1,R_2)$ and $\dIso(R_2,R_3)$ such
  that
  \begin{equation*}
    \norm[\Lin{\HS_{12}}]{\iota_{12} R_1 \iota_{12}^*-\iota_{21} R_2 \iota_{21}^*}
    + \norm[\Lin{\HS_{23}}]{\iota_{23} R_2 \iota_{23}^*-\iota_{32} R_3 \iota_{32}^*}
    <  \dIso(R_1,R_2)+\dIso(R_2,R_3)  + \eps.
  \end{equation*}
  The idea is to construct $(\HS, \iota_{13}, \iota_{31})$ such
  that
  \begin{equation*}
    \norm[\Lin \HS)]{\iota_{13}R_1 \iota_{13}^*
      -\iota_{31} R_3 \iota_{31}^*}
    \leq  \norm[\Lin{\HS_{12}}]{\iota_{12} R_1 \iota_{12}^*-\iota_{21} R_2 \iota_{21}^*}
    + \norm[\Lin{\HS_{23}}]{\iota_{23} R_2 \iota_{23}^*-\iota_{32} R_3 \iota_{32}^*}.
  \end{equation*}
  This can be done as follows: We split
  $\HS_{12}=\iota_{21}(\HS_2) \oplus \iota_{21}(\HS_2)^\perp$ and write it
  by $f_{12}= \iota_{21} b_{12}+r_{12}$ for an element $f_{12} \in \HS_{12}$.
  Similarly, $\HS_{23}=\iota_{23}(\HS_2) \oplus \iota_{23}(\HS_2)^\perp$ and
  $f_{23}= \iota_{23} b_{23}+r_{23}$ for $f_{23} \in \HS_{23}$.  Now define
  $\HS:=\iota_{21}(\HS_2)^\perp \oplus \HS_2 \oplus \iota_{23}(\HS_2)^\perp$
  and new isometries via
  \begin{align*}
    \map{\wt\iota_{12}&}{\HS_{12}}{\HS},\quad
                        \wt\iota_{12} f_{12}
                        =\wt\iota_{12}(\iota_{21}b_{12}+r_{12}):= (r_{12},b_{12},0)\\
    \map{\wt\iota_{23}&}{\HS_{23}}{\HS},\quad
                  \wt\iota_{23} f_{23}=\wt\iota_{12}(\iota_{23}b_{23}+r_{23}):= (0,b_{23},r_{23}).
  \end{align*}
  Using Pythagoras we obtain
  \begin{align*}
    \normsqr[\HS]{\wt\iota_{12}f_{12}}
    = \normsqr[\HS_{12}]{r_{12}}+\normsqr[\HS_2]{b_{12}}
    = \normsqr[\HS_{12}]{r_{12}}+\normsqr[\HS_{12}]{\iota_{12} b_{12}}
    =\normsqr[\HS_{12}]{f_{12}}.
  \end{align*}
  Similarly, we can show that $\wt\iota_{23}$ is an isometry.  Finally, we
  set
  \begin{align*}
    \map{\iota_{13}&}{\HS_1}{\HS}, \quad
                     \iota_{13}:= \wt\iota_{12}\iota_{12}
    &\quadtext{and}
      \map{\iota_{31}&}{\HS_3}{\HS}, \quad
    \iota_{31}:= \wt\iota_{23}\iota_{32}.
  \end{align*}
  The diagram below gives an overview of this construction, and we
  show that it is commutative:
  \begin{align*}
    \begin{tikzcd}[column sep=tiny, row sep=huge,ampersand replacement=\&]
      \& \& \& \iota_{21}(\HS_2)^\perp \oplus \HS_2 \oplus \iota_{23}(\HS_2)^\perp=\HS \& \& \\
     \& \&\HS_{12} \ar[overlay, ur,swap, "\wt\iota_{12}", hook]
       \& \&\HS_{23} \ar[overlay, ul, "\wt\iota_{23}", hook'] \&\\
       \&\HS_1 \ar[overlay, uurr, "\iota_{13}", hook, bend left=40] %
       \ar[overlay, ur,swap, "\iota_{12}", hook]
       \& \&\HS_2 \ar[overlay, ul, swap, "\iota_{21}", hook'] \ar[overlay, ur, "\iota_{23}", hook]
       \& \&\HS_3 \ar[overlay, ul, "\iota_{32}", hook'] %
       \ar[overlay, uull,swap, "\iota_{31}", hook', bend right=40]
     \end{tikzcd}
 \end{align*}
  We now have $\wt\iota_{12}\iota_{21}b=(0,b,0)=\wt\iota_{23}\iota_{23}b$ for all
  $b \in \HS_2$.  In particular, we have commutation on the inner
  diagram.  Moreover, we now estimate
  \begin{align*}
    \norm
    {\iota_{13}R_1 \iota_{13}^*-\iota_{31} R_3 \iota_{31}^*}
    &=\norm
      {\wt\iota_{12}\iota_{12} R_1 \iota_{12}^*\wt\iota_{12}^* %
      -\wt\iota_{23}\iota_{32} R_3 \iota_{32}^*\wt\iota_{23}^*}\\
    &=\norm
      {\wt\iota_{12}\iota_{12} R_1 \iota_{12}^*\wt\iota_{12}^* %
      -\wt\iota_{12}\iota_{21} R_2 \iota_{21}^*\wt\iota_{12}^*
      +\wt\iota_{12}\iota_{21} R_2 \iota_{21}^*\wt\iota_{12}^*
      -\wt\iota_{23}\iota_{32} R_3 \iota_{32}^*\wt\iota_{23}^*}\\
    &\leq \norm[\Lin{\HS_1}]{\iota_{12} R_1 \iota_{12}^*-\iota_{21} R_2 \iota_{21}^*}
      + \norm[\Lin{\HS_2}]{\iota_{23} R_2 \iota_{23}^*-\iota_{32} R_3 \iota_{32}^*}\\
    & < \dIso(R_1,R_2) + \dIso(R_2,R_3) + \eps.
  \end{align*}
  Thus for any $\eps>0$ we obtain
  $\dIso(R_1,R_3) < \dIso(R_1,R_2) + \dIso(R_2,R_3) + \eps$, and hence
  the triangle inequality.  The last assertion on the adjoints follows
  from
  \begin{equation*}
    D_{\HS,\iota_1,\iota_2}(R_1,R_2)^*
    = \iota_1R_1^* \iota_1^* -\iota_2R_2^*\iota_2^*
    =     D_{\HS,\iota_1,\iota_2}(R_1^*,R_2^*)
  \end{equation*}
  and $\norm D=\norm{D^*}$.
\end{proof}

\begin{remark}[Relation of the isometric distance with a
  Gromov-Hausdorff distance]
  \label{rem:gh-dist}
  If we consider the graphs $\graph R_n$ of $R_n$ ($n=1,2$) as closed
  subspaces of $\HS_n \times \HS_n$, then we can consider their
  Gromov-Hausdorff distance
  \begin{align*}
    \dGH(\graph R_1,\graph R_2)
    = \inf \set{\dHaus(I_1(\graph R_1), I_2(\graph R_2)}
    {\text{$\map {I_n}{\HS_n \times \HS_n}\wt \HS$}},
  \end{align*}
  where the infimum is taken over all isometries $I_n$ into a third
  Hilbert space $\wt \HS$.  This distance is too rough, as e.g.\
  $R_1=\id$ and $R_2=-\id$ on $\HS=\HS_1=\HS_2$ would have distance
  $0$: choose the isometry
  $\map {I_1} {\HS \times \HS}{\wt \HS=\HS \times \HS}$ with
  $I(f,g)=(f,-g)$ and $I_2=\id$.  We somehow have to restrict to
  isometries of \emph{diagonal type}
  $I_n(f_n,g_n)=(\iota_n f_n,\iota_n g_n)$ with isometries
  $\map {\iota_n}{\HS_n}\HS$ and define hence a modified
  Gromov-Hausdorff distance larger than the one defined above.
  Moreover, we were only able to give a lower bound on
  $\dIso(R_1,R_2)$ in terms of the modified Gromov-Hausdorff distance.
  There is also a relation with the gap distance defined by Kato
  (see~\cite[Sec.~IV.2]{kato:66}).  We will treat this and related
  questions in a forthcoming publication.
\end{remark}

\subsection{Relation between the unitary and isometric distance}
\label{ssec:duni-diso}

The inequality
\begin{equation}
  \label{eq:trivial-duni-diso}
  \dUni(R_1,R_2)
  \ge \dIso(R_1,R_2)
\end{equation}
for $R_n\in \Lin{\HS_n}$ ($n=1,2$) follows immediately from the
definition, as the competition set for $\dUni$ is smaller than the one
for $\dIso$: any operator $U_{12} \in \Unitary{\HS_1,\HS_2}$ induces
the isometries $\iota_1=U_{12}$ and $\iota_2=\id$ into the common
Hilbert space $\HS=\HS_2$.

\begin{lemma}
  \label{lem:spectra.of.isometries}
  Given a bounded operator $R_1$ on a Hilbert space $\HS_1$ and an
  isometry $\map{\iota_1}{\HS_1}{\HS}$, we have
  \begin{align}
    \label{eq:spec_iso}
    \spec{\iota_1 R_1 \iota_1^*}
    =
    \begin{cases}
      \spec{R_1}, & \text{$\iota_1$ surjective,} \\
      \spec{R_1}\cup \{0\}, & \text{$\iota_1$ not surjective.}
    \end{cases}
  \end{align}
\end{lemma}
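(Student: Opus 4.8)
The plan is to treat the two cases of the statement separately, reducing both to the unitary invariance of the spectrum recorded in~\eqref{eq:spec.uni.eq}. The starting observation is that, by definition of an isometry, $\iota_1^*\iota_1=\id_{\HS_1}$, so that $P:=\iota_1\iota_1^*$ is the orthogonal projection of $\HS$ onto the closed subspace $\ran\iota_1$.

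If $\iota_1$ is surjective, then it is a unitary operator $\map{\iota_1}{\HS_1}{\HS}$, and hence $\iota_1 R_1\iota_1^*$ is unitarily equivalent to $R_1$. Its spectrum therefore equals $\spec{R_1}$ by~\eqref{eq:spec.uni.eq}, which is the first case.

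For the non-surjective case I would set $\mathcal K:=(\ran\iota_1)^\perp\neq\{0\}$ and work in the orthogonal decomposition $\HS=\ran\iota_1\oplus\mathcal K$. First I would check that $\iota_1^*$ annihilates $\mathcal K$: for $v\in\mathcal K$ and any $u\in\HS_1$ one has $\iprod{\iota_1^*v}{u}=\iprod{v}{\iota_1 u}=0$, whence $\iota_1^*v=0$. Consequently $\iota_1 R_1\iota_1^*$ vanishes on $\mathcal K$ and maps all of $\HS$ into $\ran\iota_1$; with respect to the above decomposition it therefore has the block-diagonal form $\wt R_1\oplus 0_{\mathcal K}$, where $\wt R_1$ is the compression to $\ran\iota_1$. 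Since $\map{\iota_1}{\HS_1}{\ran\iota_1}$ is unitary onto its range, $\wt R_1$ is unitarily equivalent to $R_1$, so $\spec{\wt R_1}=\spec{R_1}$ again by~\eqref{eq:spec.uni.eq}.

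It then remains to combine the two blocks. For a two-summand orthogonal direct sum of bounded operators the resolvent splits, so $\lambda\in\rho(\wt R_1\oplus 0_{\mathcal K})$ holds exactly when $\lambda$ lies in the resolvent sets of both summands; equivalently $\spec{\wt R_1\oplus 0_{\mathcal K}}=\spec{\wt R_1}\cup\spec{0_{\mathcal K}}$. As $\mathcal K\neq\{0\}$ we have $\spec{0_{\mathcal K}}=\{0\}$, and the claim $\spec{\iota_1 R_1\iota_1^*}=\spec{R_1}\cup\{0\}$ follows. The argument is elementary throughout; the only point meriting care is the verification of the block-diagonal structure (that $\iota_1^*$ kills $\mathcal K$ while $\iota_1$ identifies $\HS_1$ isometrically with $\ran\iota_1$), and I would note that the extra spectral value $0$ is in fact a genuine eigenvalue, since $\mathcal K\subseteq\ker(\iota_1 R_1\iota_1^*)$, which makes its appearance transparent.
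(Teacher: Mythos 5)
Your argument is correct and follows essentially the same route as the paper: restrict $\iota_1$ to a unitary onto its range, observe that $\iota_1 R_1\iota_1^*$ is block-diagonal of the form $\wt R_1\oplus 0$ with respect to $\HS=\ran\iota_1\oplus(\ran\iota_1)^\perp$, and use that the spectrum of a direct sum is the union of the spectra. The only difference is that you spell out the verification of the block structure and note that $0$ is a genuine eigenvalue, which the paper leaves implicit.
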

\begin{proof}
  Let $\map{U_1}{\HS_1}{\iota_1(\HS_1)}$ be defined by
  $U_1 f_1=\iota_1f_1 \in \HS$, then clearly $U_1$ is unitary, and
  $\wt R_1:=U_1 R_1 U_1^*$ is unitarily equivalent with $R_1$ having
  the same spectrum by~\eqref{eq:spec.uni.eq}.  If $\iota_1$ is
  surjective, then $U_1=\iota_1$ and hence $R_1$ and
  $\iota_1 R_1 \iota_1^*$ have the same spectra.

  If $\iota_1$ is not surjective, then
  $\HS_1^\perp := \iota_1(\HS_1)^\perp$ is a non-trivial subspace of
  $\HS$.  Moreover, $\iota_1 R_1 \iota_1^*=\wt R_1 \oplus 0$ is a
  diagonal $2\times2$-block matrix operator with respect to the
  splitting $\HS=\iota_1(\HS_1) \oplus \HS_1^\perp$.  The spectra of
  such operators is the union of the spectra of the diagonal
  operators, i.e.,
  $\spec {\iota_1 R_1 \iota_1^*}=\spec {\wt R_1} \cup \{0\}$ and we are done.
\end{proof}

When examining the isometric distance one can ask whether it is enough
to consider only unitary maps instead of isometries.  It turns out
that $0$ being in both essential spectra of $R_1$ and $R_2$
(i.e.~\eqref{eq:cond.ess.spec}) is a sufficient condition.  Note that
$0 \in \essspec {R_n}$ is always fulfilled if $R_n$ is the resolvent
of an unbounded operator.

The idea is to look how crude multiplicity functions behave under
composing the operator with isometries.  Recall the definition of
$\dCmf$ in~\eqref{eq:def.dcmf}.
\begin{lemma}
  \label{lem:diso_vs_delta}
  Let $R_n\in \Lin{\HS_n}$ be a self-adjoint operator and let
  $\iota_n \in \Iso{\HS_n,\HS}$ be an isometry for $n=1,2$.  If
  $0 \in \essspec{R_n}$, then
  $\alpha_{R_n}=\alpha_{\iota_n R_n \iota_n^*}$.
\end{lemma}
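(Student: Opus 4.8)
The plan is to reduce to the block-diagonal description already extracted in the proof of \Lem{spectra.of.isometries} and then to compare the two crude multiplicity functions pointwise. Writing $\map{U_n}{\HS_n}{\iota_n(\HS_n)}$ for the unitary $U_n f = \iota_n f$ and $\wt R_n := U_n R_n U_n^*$, the operator $\iota_n R_n \iota_n^*$ is the block-diagonal operator $\wt R_n \oplus 0$ with respect to the orthogonal splitting $\HS = \iota_n(\HS_n) \oplus \HS_n^\perp$, where $\HS_n^\perp := \iota_n(\HS_n)^\perp$ (which is trivial precisely when $\iota_n$ is surjective). Since $\wt R_n$ is unitarily equivalent to $R_n$, \Lem{cmf.same} gives $\alpha_{\wt R_n} = \alpha_{R_n}$, so it suffices to show $\alpha_{\iota_n R_n \iota_n^*} = \alpha_{\wt R_n}$.

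Because spectral projections of a block-diagonal operator decompose along the blocks, for every Borel set $B \subset \R$ we have
\begin{equation*}
  \rank \1_B(\iota_n R_n \iota_n^*)
  = \rank \1_B(\wt R_n) + \rank \1_B(0_{\HS_n^\perp}).
\end{equation*}
For a fixed $\lambda \neq 0$ and $\eps < \abs{\lambda}$ the open interval $\openBall_\eps(\lambda)$ does not contain $0$, the sole spectral value of $0_{\HS_n^\perp}$, so $\1_{\openBall_\eps(\lambda)}(0_{\HS_n^\perp}) = 0$ and the second summand vanishes. Letting $\eps \to 0$ therefore yields $\alpha_{\iota_n R_n \iota_n^*}(\lambda) = \alpha_{\wt R_n}(\lambda)$ for all $\lambda \neq 0$, without using any hypothesis on $0$.

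The only remaining point, and the one where the assumption $0 \in \essspec{R_n}$ enters, is $\lambda = 0$. Here the spectrum of $0_{\HS_n^\perp}$ is $\{0\} \subset \openBall_\eps(0)$ for every $\eps > 0$, so $\1_{\openBall_\eps(0)}(0_{\HS_n^\perp}) = \id_{\HS_n^\perp}$ and the extra summand equals the constant $\dim \HS_n^\perp \in \NbarO$. However, $0 \in \essspec{R_n}$ means $\alpha_{R_n}(0) = \infty$ by \Prpenum{char.cmf}{cmf.spec.c}, hence $\alpha_{\wt R_n}(0) = \infty$ as well. Adding the (finite or infinite) cardinal $\dim \HS_n^\perp$ to $\infty$ leaves $\infty$, so $\alpha_{\iota_n R_n \iota_n^*}(0) = \infty = \alpha_{\wt R_n}(0)$. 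Combining the two cases gives $\alpha_{\iota_n R_n \iota_n^*} = \alpha_{\wt R_n} = \alpha_{R_n}$, as claimed. The main (and essentially only) obstacle is the behaviour at $0$: without the hypothesis $0 \in \essspec{R_n}$ the added zero block would raise the multiplicity at $0$ by $\dim \HS_n^\perp$, and the equality of the two crude multiplicity functions could genuinely fail.
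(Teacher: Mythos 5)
Your proof is correct and follows essentially the same route as the paper: both reduce to the block decomposition $\iota_n R_n \iota_n^* = \wt R_n \oplus 0$ on $\HS = \iota_n(\HS_n) \oplus \iota_n(\HS_n)^\perp$, split into the cases $0 \notin I$ and $0 \in I$ for the spectral projections, and invoke $0 \in \essspec{R_n}$ only to absorb the extra zero block at $\lambda = 0$. Your pointwise comparison of the crude multiplicity functions on shrinking balls is just a slightly more explicit phrasing of the paper's rank comparison over open sets, and your closing remark correctly identifies why the hypothesis cannot be dropped.
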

\begin{proof}
  Denote by $\map{U_n}{\HS_n}{\iota_n(\HS_n)}$ with
  $U_n f_n=\iota_n f_n$ the unitary operator obtained from $\iota_n$
  by restricting the target space.  Moreover, let
  $\wt R_n = U_n R_n U_n^*$ be the operator
  acting on $\iota_n(\HS_n)$ and unitarily equivalent with $R_n$.

  Let $I$ be an open subset of $\R$; we then have
  \begin{align*}
    \1_I(\iota_n R_n \iota_n^*)
    = \1_I(\wt R_n \oplus 0)
    = \1_I(\wt R_n) \oplus \1_I(0).
  \end{align*}
  If $0 \notin I$, then $\1_I(0)=0$ and hence
  $\1_I(\iota_nR_n\iota_n^*)=\1_I(\wt R_n) \oplus 0$.  In particular,
  $\rank \1_I(\iota_nR_n\iota_n^*)=\rank \1_I(\wt R_n)=\rank
  \1_I(R_n)$.  If $0 \in I$, then
  $\rank \1_I(\iota_nR_n\iota_n^*)=\infty$ and
  $\rank \1_I(R_n)=\infty$ as $0$ is in the essential spectrum (using
  also \Lem{spectra.of.isometries}).

  In particular, $\rank \1_I(R_n)=\rank \1_I(\iota_nR_n\iota_n^*)$ for
  any open set $I$, i.e., the crude multiplicity function of $R_n$ and
  $\iota_nR_n\iota_n^*$ agree, see \Prp{char.cmf}.
\end{proof}
\begin{corollary}
  \label{cor:diso_vs_delta}
  Let $R_n\in \Lin{\HS_n}$ be self-adjoint operator and let
  $\iota_n \in \Iso{\HS_n,\HS}$ ($n=1,2$).
  If~\eqref{eq:cond.ess.spec} is fulfilled then
  \begin{equation*}
    \dCmf(R_1,R_2)
    = \dCmf(\iota_1 R_1 \iota_1^*,\iota_2 R_2\iota_2^*).
  \end{equation*}
\end{corollary}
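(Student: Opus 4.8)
The plan is to reduce the entire statement to \Lem{diso_vs_delta}, which has already done the real work. The first step is to observe that the hypothesis \eqref{eq:cond.ess.spec}, namely $0 \in \essspec{R_1} \cap \essspec{R_2}$, must be read as two \emph{separate} membership conditions: $0 \in \essspec{R_1}$ and $0 \in \essspec{R_2}$. This matters because \Lem{diso_vs_delta} concerns a single self-adjoint operator together with a single isometry, so I want to apply it to $n=1$ and $n=2$ independently rather than jointly. Since $R_1$ and $R_2$ are self-adjoint, their crude multiplicity functions $\alpha_{R_1}$, $\alpha_{R_2}$ (and likewise those of $\iota_n R_n \iota_n^*$) are well defined by \Prp{char.cmf}.

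Next I would apply \Lem{diso_vs_delta} twice. For $n=1$, the condition $0 \in \essspec{R_1}$ together with the isometry $\iota_1$ gives $\alpha_{R_1} = \alpha_{\iota_1 R_1 \iota_1^*}$; for $n=2$, the condition $0 \in \essspec{R_2}$ together with $\iota_2$ gives $\alpha_{R_2} = \alpha_{\iota_2 R_2 \iota_2^*}$. The last step is to recall that, by definition \eqref{eq:def.dcmf}, the quantity $\dCmf$ is simply the L\'evy--Prokhorov distance $\delta$ of the associated crude multiplicity functions, so it depends on the operators \emph{only} through those functions. Substituting the two equalities then yields
\[
  \dCmf(R_1,R_2)
  = \delta(\alpha_{R_1},\alpha_{R_2})
  = \delta(\alpha_{\iota_1 R_1 \iota_1^*},\alpha_{\iota_2 R_2 \iota_2^*})
  = \dCmf(\iota_1 R_1 \iota_1^*,\iota_2 R_2\iota_2^*),
\]
which is exactly the assertion.

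I do not expect any genuine obstacle in this corollary. All the substantive content lives in \Lem{diso_vs_delta}, whose proof had to treat the two distinct cases $0 \notin I$ and $0 \in I$ for open sets $I \subset \R$, and had to invoke \Lem{spectra.of.isometries} precisely to absorb the spurious eigenvalue $0$ that a non-surjective isometry $\iota_n$ creates in $\spec{\iota_n R_n \iota_n^*}$. Here the only thing to be careful about is the bookkeeping just described: splitting \eqref{eq:cond.ess.spec} into its two halves so that the lemma applies to each operator separately, and then using that $\dCmf$ is a function of the crude multiplicity functions alone.
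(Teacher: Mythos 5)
Your argument is correct and is exactly the (implicit) proof the paper intends: the corollary is stated without a separate proof precisely because it follows by applying \Lem{diso_vs_delta} to $n=1$ and $n=2$ separately and then using that $\dCmf(R_1,R_2)=\delta(\alpha_{R_1},\alpha_{R_2})$ depends on the operators only through their crude multiplicity functions. Nothing is missing.
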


We are now able to prove \Thm{main-2}:
\begin{proof}[Proof of \Thm{main-2}]
  The trivial inequality ``$\ge$'' was stated
  in~\eqref{eq:trivial-duni-diso}.  For the inequality ``$\le$'' we
  argue as follows: we have $\dUni(R_1,R_2)= \dCmf(R_1,R_2)$ by
  \Thm{azoff-davis-duni}; and the latter equals
  $\dCmf(\iota_1 R_1 \iota_1^*,\iota_2 R_2\iota_2^*)$ by
  \Cor{diso_vs_delta}.  Applying again \Thm{azoff-davis-duni} gives
  $\dCmf(\iota_1 R_1 \iota_1^*,\iota_2
  R_2\iota_2^*)=\dUni(\iota_1R_1\iota_1^*,\iota_2R_2\iota_2^*)$.  As
  $\dUni(\iota_1R_1\iota_1^*,\iota_2R_2\iota_2^*)\le
  \norm{D_{\HS,\iota_1,\iota_2}(R_1,R_2)}$ we obtain the desired inequality by
  taking the infimum over all $\iota_n$ and $\HS$.
\end{proof}
We comment on the condition~\eqref{eq:cond.ess.spec} on the essential
spectra in \Subsec{counterex}, especially in
\ExS{duni.eq.diso.counterex}{diso.0.duni.not}. 

%
\section{Quasi-unitary distance and its relation with the isometric
  distance}
\label{sec:dque}
%

The aim of this subsection is to analyse the last distance between two
operators and its relation with the other distances already defined.

\subsection{Quasi-unitary distance}
\label{ssec:qu.dist.def}
The next idea to compare these operators is to use so-called
\emph{quasi-unitary operators}, i.e., operators that are unitary up to
an ``error'' $\delta$ in a certain sense.  The quasi-unitary distance
$\dQUE$ is defined in~\eqref{eq:que-dist}.  We first show that $\dQUE$
fulfils some properties of a metric:
\begin{lemma}
  \label{lem:dque.metric}
  The distance $\dQUE$ fulfils $\dQUE(R_1,R_1)=0$,
  $\dQUE(R_1,R_2) \ge 0$ and $\dQUE(R_1,R_2) = \dQUE(R_2,R_1)$.
  Moreover, a weaker version of the triangle inequality holds, namely
  \begin{equation*}
    \dQUE(R_1,R_3)
    \le \sqrt 3\bigl(\dQUE(R_1,R_2) + \dQUE(R_2,R_3)\bigr).
  \end{equation*}
\end{lemma}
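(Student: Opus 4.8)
The plan is to dispatch the three elementary properties first and then concentrate on the weak triangle inequality, which is the only delicate point. For $\dQUE(R_1,R_1)=0$ I would simply take $J=\id_{\HS_1}$: then $J^*J=JJ^*=\id$ annihilates the first two entries of $\delta_J$, while $JR_1-R_1J=0=JR_1^*-R_1^*J$, so $\delta_{\id}(R_1,R_1)=0$. Non-negativity is immediate because $\norm J\le 1$ forces $\id-J^*J\ge 0$ and $\id-JJ^*\ge 0$, whence $R_1^*(\id-J^*J)R_1$ and $R_2^*(\id-JJ^*)R_2$ are positive and all four quantities under the maximum are nonnegative reals.

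For symmetry the plan is to use the norm-preserving bijection $J\mapsto J^*$ between $\set{J\in\Lin{\HS_1,\HS_2}}{\norm J\le 1}$ and $\set{K\in\Lin{\HS_2,\HS_1}}{\norm K\le 1}$, together with the pointwise identity $\delta_J(R_1,R_2)=\delta_{J^*}(R_2,R_1)$. The first two entries of $\delta_{J^*}(R_2,R_1)$ are exactly the first two entries of $\delta_J(R_1,R_2)$ with their roles swapped (using $(J^*)^*=J$), and the two commutator entries match after taking adjoints: $(JR_1-R_2J)^*=R_1^*J^*-J^*R_2^*$ and $(JR_1^*-R_2^*J)^*=R_1J^*-J^*R_2$, so that $\norm{JR_1-R_2J}=\norm{J^*R_2^*-R_1^*J^*}$ and $\norm{JR_1^*-R_2^*J}=\norm{J^*R_2-R_1J^*}$ by $\norm X=\norm{X^*}$. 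Taking the infimum then gives $\dQUE(R_1,R_2)=\dQUE(R_2,R_1)$; this is precisely the point at which both the third and the fourth entry of $\delta_J$ are needed.

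For the triangle inequality, fix $R_n\in\Lin{\HS_n}$ ($n=1,2,3$) and, given $\eps>0$, choose admissible $J_{12},J_{23}$ of norm $\le 1$ nearly realizing $\dQUE(R_1,R_2)$ and $\dQUE(R_2,R_3)$; abbreviate $\delta_{12}:=\delta_{J_{12}}(R_1,R_2)$ and $\delta_{23}:=\delta_{J_{23}}(R_2,R_3)$. I would take the composition $J_{13}:=J_{23}J_{12}\in\Lin{\HS_1,\HS_3}$ as candidate, which satisfies $\norm{J_{13}}\le 1$, and then estimate the four entries of $\delta_{J_{13}}(R_1,R_3)$. The two commutator entries are easy: from the telescoping identity $J_{13}R_1-R_3J_{13}=J_{23}(J_{12}R_1-R_2J_{12})+(J_{23}R_2-R_3J_{23})J_{12}$ and $\norm{J_{12}},\norm{J_{23}}\le 1$ one gets $\norm{J_{13}R_1-R_3J_{13}}\le\delta_{12}+\delta_{23}$, and the adjoint entry is handled identically.

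The first two entries are the crux, and here I would use the operator identity $\id-J_{13}^*J_{13}=(\id-J_{12}^*J_{12})+J_{12}^*(\id-J_{23}^*J_{23})J_{12}$, both of whose summands are positive. Conjugating by $R_1$ and applying $\norm{A+B}\le\norm A+\norm B$ to the positive operators $A=R_1^*(\id-J_{12}^*J_{12})R_1$ and $B=R_1^*J_{12}^*(\id-J_{23}^*J_{23})J_{12}R_1$, one has $\norm A\le\delta_{12}^2$ at once, while $\norm B=\norm{(\id-J_{23}^*J_{23})^{1/2}J_{12}R_1}^2$. The decisive step is to factor out $(\id-J_{23}^*J_{23})^{1/2}$ and insert $J_{12}R_1=R_2J_{12}+(J_{12}R_1-R_2J_{12})$, giving $\norm{(\id-J_{23}^*J_{23})^{1/2}J_{12}R_1}\le\norm{(\id-J_{23}^*J_{23})^{1/2}R_2}\,\norm{J_{12}}+\norm{(\id-J_{23}^*J_{23})^{1/2}}\,\delta_{12}\le\delta_{23}+\delta_{12}$, where $\norm{(\id-J_{23}^*J_{23})^{1/2}R_2}^2=\norm{R_2^*(\id-J_{23}^*J_{23})R_2}\le\delta_{23}^2$ and $\norm{(\id-J_{23}^*J_{23})^{1/2}}\le 1$. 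Thus $\norm{R_1^*(\id-J_{13}^*J_{13})R_1}\le\delta_{12}^2+(\delta_{12}+\delta_{23})^2\le 3(\delta_{12}+\delta_{23})^2$, and symmetrically for the second entry; the maximum of the four bounds yields $\delta_{J_{13}}(R_1,R_3)\le\sqrt 3\,(\delta_{12}+\delta_{23})$, and letting $\eps\to 0$ after taking the infimum over $J_{12},J_{23}$ gives the claim. The main obstacle is exactly this first-entry estimate: the naive triangle inequality fails because the mixed term would otherwise pick up an uncontrolled factor $\norm{R_2}$, and the square-root factorization together with the positivity of $\id-J^*J$ is what keeps $R_2$ paired with $(\id-J_{23}^*J_{23})^{1/2}$, i.e.\ with the defect term already controlled by $\delta_{23}$.
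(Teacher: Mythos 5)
Your proof is correct, and for the triangle inequality it takes a genuinely different route from the paper. The paper handles the first three properties exactly as you do ($J=\id_{\HS_1}$, positivity, and the identity $\delta_{J^*}(R_2,R_1)=\delta_J(R_1,R_2)$), but for the weak triangle inequality it argues \emph{indirectly}: it invokes the triangle inequality for $\dIso$ (Proposition~\ref{prp:diso.pseudo-metric}) together with the two-sided comparison $\dQUE\le\dIso\le\sqrt 3\,\dQUE$ of Theorem~\ref{thm:main-3}, so the constant $\sqrt 3$ is simply inherited from that theorem. You instead give a direct, self-contained argument by composing identification operators, $J_{13}:=J_{23}J_{12}$, using the telescoping identity for the commutator entries and the positive decomposition $\id-J_{13}^*J_{13}=(\id-J_{12}^*J_{12})+J_{12}^*(\id-J_{23}^*J_{23})J_{12}$ together with the factorisation $\norm{B}=\norm{(\id-J_{23}^*J_{23})^{1/2}J_{12}R_1}^2$ for the defect entries; the insertion of $R_2J_{12}$ is exactly what keeps $R_2$ paired with the controlled defect $(\id-J_{23}^*J_{23})^{1/2}$, and the adjoint commutator entry of $\delta_{J_{23}}$ is what makes the symmetric estimate for $\norm{R_3^*(\id-J_{13}J_{13}^*)R_3}$ go through. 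Your approach buys two things: it does not depend on Theorem~\ref{thm:main-3} or on the Sz.-Nagy dilation construction, and since $\delta_{12}^2+(\delta_{12}+\delta_{23})^2\le 2(\delta_{12}+\delta_{23})^2$ it actually yields the sharper constant $\sqrt 2$ in place of $\sqrt 3$ --- precisely the improvement the paper announces in Remark~\ref{rem:new-que} but defers to a forthcoming publication.
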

\begin{proof}
  The first statement is immediate by choosing $J=\id_{\HS_1}$ so that
  $\delta_{\id_{\HS_1}}(R_1,R_1)=0$; the second statement is trivial.
  The symmetry follows from the fact that
  $\delta_{J^*}(R_2,R_1)=\delta_J(R_1,R_2)$.

  For the (generalised) triangle inequality we argue indirectly: we
  use the triangle inequality for $\dIso$ (\Prp{diso.pseudo-metric})
  and $\dQUE \le \sqrt 3 \dIso$ (\Thm{main-3}).
\end{proof}

Let us first comment on
this version of quasi-unitary distance here, where we use
\begin{align*}
  \norm{R_1^*(\id-J^*J)R_1}& \quadtext{and}
  \norm{R_2^*(\id-JJ^*)R_2}
  \qquad\text{instead of}\\
  \norm{(\id-J^*J)R_1}& \quadtext{and}
  \norm{(\id-JJ^*)R_2}
\end{align*}
in previous publications such
as~\cite{post-zimmer:22,post-simmer:20b}:
\begin{remarks}[Why the new version of quasi-unitary equivalence?]
  \label{rem:new-que}
  \indent
  \begin{itemize}
  \item We changed the definition of quasi-unitary equivalence here as
    the new one is better adapted to the Weidmann setting; actually,
    the associated distances $\dQUE$ and $\dIso$ are equivalent; hence
    we do not have the phenomenon of \emph{loss of convergence speed}
    as we observed in~\cite{post-zimmer:22}; see also
    \Rem{isometries.wrong.side}


  \item We can show the generalised triangle inequality directly with
    $\sqrt 2$ instead of $\sqrt 3$ as above, see the forthcoming
    publication~\cite{post-zimmer:pre24c}.  It is not clear to us
    whether $\dQUE$ actually fulfils the (proper) triangle inequality
    (without extra factors such as $\sqrt 2$).  At least on multiples
    of the identity, it is a metric, see \Prp{dque.mult.id}.

  \item Other variants such as more general $J$ instead of only
    \emph{contractions} $\map J {\HS_1}{\HS_2}$ will be treated
    in~\cite{post-zimmer:pre24c}.  Finally, in some versions of
    quasi-unitary equivalences, we used $\map{J'}{\HS_2}{\HS_1}$
    instead of the adjoint $J^*$.  It is shown
    in~\cite[Lem.~2.11~(f)]{post-simmer:20b} that one may choose
    $J'=J^*$ without loss of generality, see
    also~\cite{post-zimmer:pre24c} for further discussions.
  \end{itemize}
\end{remarks}

\begin{lemma}
  \label{lem:uni.eq.dense}
  If $\ran R_n$ is dense in $\HS_n$ for $n=1,2$, and if
  $\delta_J(R_1,R_2)=0$ (see~\eqref{eq:delta_J}), then
  $\map J {\HS_1}{\HS_2}$ is unitary and $R_2=JR_1J^*$, i.e., $R_1$ and
  $R_2$ are unitarily equivalent.
\end{lemma}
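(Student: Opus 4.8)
The plan is to unpack the hypothesis $\delta_J(R_1,R_2)=0$ into the simultaneous vanishing of all four terms in~\eqref{eq:delta_J} and to exploit that $J$ is a contraction. First I would record that $\norm J \le 1$ gives $J^*J \le \id$, so $\id-J^*J \ge 0$ and possesses a positive square root $(\id-J^*J)^{1/2}$. The vanishing of the first term, $R_1^*(\id-J^*J)R_1=0$, can be rewritten as $\bigl((\id-J^*J)^{1/2}R_1\bigr)^*\bigl((\id-J^*J)^{1/2}R_1\bigr)=0$; since $A^*A=0$ forces $A=0$, this yields $(\id-J^*J)^{1/2}R_1=0$, and applying $(\id-J^*J)^{1/2}$ once more gives $(\id-J^*J)R_1=0$.

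Next I would invoke the density of $\ran R_1$. The relation $(\id-J^*J)R_1=0$ says that the bounded operator $\id-J^*J$ annihilates $\ran R_1$; by continuity it annihilates the closure $\overline{\ran R_1}=\HS_1$, so $J^*J=\id_{\HS_1}$ and $J$ is an isometry. Running the identical argument on the second term $R_2^*(\id-JJ^*)R_2=0$ (using $JJ^*\le\id$ together with the density of $\ran R_2$) produces $JJ^*=\id_{\HS_2}$, so that $J$ is also a coisometry. Together, $J^*J=\id_{\HS_1}$ and $JJ^*=\id_{\HS_2}$ make $\map J{\HS_1}{\HS_2}$ unitary. Finally, the third term gives the intertwining relation $JR_1=R_2J$; multiplying on the right by $J^*$ and using $JJ^*=\id_{\HS_2}$ yields $R_2=JR_1J^*$, i.e.\ $R_1$ and $R_2$ are unitarily equivalent.

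I do not expect a serious obstacle here, as the argument is essentially a positivity-plus-density computation. The one step requiring care is the passage from $R_1^*(\id-J^*J)R_1=0$ to $(\id-J^*J)R_1=0$, which is precisely where the positivity of $\id-J^*J$ (hence the existence of its square root, guaranteed by $\norm J\le 1$) enters; without the contraction hypothesis the conclusion would fail. It is worth noting that the fourth term in~\eqref{eq:delta_J} is not needed for this implication: once $J$ is unitary and $R_2=JR_1J^*$, taking adjoints gives $R_2^*=JR_1^*J^*$, whence $JR_1^*=R_2^*J$ follows automatically; the fourth term is present only to ensure the symmetry of $\dQUE$ for non-self-adjoint operators, as remarked after~\eqref{eq:delta_J}.
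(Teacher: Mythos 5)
Your proof is correct, but the key step is handled by a genuinely different mechanism than in the paper. You pass from $R_1^*(\id-J^*J)R_1=0$ to $(\id-J^*J)R_1=0$ via the positive square root of $\id-J^*J$ and the $C^*$-identity $A^*A=0\implies A=0$, which requires $J^*J\le\id$, i.e.\ the contraction property $\norm J\le 1$ built into the competition set of $\dQUE$ in~\eqref{eq:que-dist}. The paper instead defines the (possibly unbounded) inverse $S_1$ of $\map{R_1}{(\ker R_1)^\perp}{\HS_1}$ on $\dom S_1=\ran R_1$, composes to get $R_1^*(\id-J^*J)=0$ on $\ran R_1$, and then uses $\ker R_1^*=(\ran R_1)^\perp=\{0\}$ to conclude $(\id-J^*J)\restr{\ran R_1}=0$; density and boundedness finish the argument. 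The paper's route buys you the conclusion for an \emph{arbitrary} bounded $J$, with no positivity of $\id-J^*J$ needed. Consequently your closing remark that ``without the contraction hypothesis the conclusion would fail'' is not correct: the paper's own proof shows the implication holds for general bounded $J$, since the density of $\ran R_1$ alone forces $(\id-J^*J)x=0$ for all $x\in\ran R_1$ once $(\id-J^*J)x\in\ker R_1^*=\{0\}$. The remainder of your argument (the coisometry step, the intertwining $JR_1=R_2J$ multiplied by $J^*$ to get $R_2=JR_1J^*$, and the observation that the fourth term in~\eqref{eq:delta_J} is redundant here) matches the paper exactly.
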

\begin{proof}
  If $\ran R_n$ is dense in $\HS_n$, then we can define a densely
  defined operator $S_n$ with $\dom S_n = \ran R_n$ by taking the
  inverse of $\map {R_n}{(\ker R_n)^\perp}{\HS_n}$.  It follows that
  $R_nS_n=\id_{\ran R_n}$.  Moreover,
  $\norm{R_1^*(\id_{\HS_1}-J^*J)R_1}=0$ implies
  \begin{align*}
    0=R_1^*(\id_{\HS_1}-J^*J)R_1, \quadtext{i.e.,}
    0=R_1^*(\id_{\HS_1}-J^*J)R_1S_1=R_1^*(\id_{\HS_1}-J^*J).
  \end{align*}
  As $\ker R_1^*=(\ran R_1)^\perp=\{0\}$, we have $\id_{\HS_1}-J^*J=0$
  on the dense subset $\ran R_1$.  As $\id_{\HS_1}-J^*J$ is bounded we
  have $J^*J=\id_{\HS_1}$.  Similarly, we see that $\id_{\HS_2}=JJ^*$,
  i.e., $J$ is unitary.  From $\norm{R_2J-JR_1}=0$ we conclude
  $R_2=JR_1J^*$, hence the claim.
\end{proof}

\subsection{Isometric and QUE-distance}
\label{ssec: weid.vs.que}
We prove in this section \Thm{main-3}, i.e., that the isometric and
QUE-distance are equivalent.  Before we show this we need some
notation and preparatory results.  We closely
follow~\cite{post-zimmer:22}; for convenience of the reader, we repeat
some arguments here.  Assume that $\HS_n$ is a Hilbert space, $R_n$ an
operator on $\HS_n$ and $\map{\iota_n}{\HS_n}\HS$ an isometry (i.e.,
$\iota_n^*\iota_n=\id_{\HS_n}$) into a third Hilbert space $\HS$ for
$n=1,2$.  We define the projection onto the ranges of the isometry
$\map{\iota_n}{\HS_n}\HS$ as
\begin{equation}
  \label{eq:def.proj}
  \map{P_n}{\HS}{\HS}\quad P_n:= \iota_n \iota_n^*
\end{equation}
for $n=1,2$.  Moreover we write $P_n^\perp:= \id - P_n$. Then one can
easily see that
\begin{subequations}
  \label{eq:formelsammlung}
  \begin{align}
    \label{eq:Ps_und_Iotas}
    P_n\iota_n =
    &\iota_n, \qquad
      \iota_n^*P_n=\iota_n^*,
    \qquad
    P^\perp_n \iota_n
    = 0, \qquad
      \iota_n ^*P^\perp_n  = 0,\\
    \label{eq:dn_in_3teile}
    D:=
    &D_{\HS,\iota_1,\iota_2}(R_1,R_2)
    = \iota_1 R_1 \iota_1^*-\iota_2 R_2 \iota_2^*
    = P_2 D P_1 + P_2^\perp D P_1 + P_2 D P_1^\perp.
  \end{align}
\end{subequations}
Note that $P_2^\perp DP_1^\perp=0$, so the last term only has three
summands.

In order to compare the isometric distance with the quasi-unitary
distance one has to define an identification operator
$\map J{\HS_1}{\HS_2}$.  A canonical way is setting
$J:=\iota_2^*\iota_1$. The next proposition gives an overview how to
express the three summands in~\eqref{eq:dn_in_3teile} in terms of $J$
and the isometries.
\begin{proposition}
  \label{prp:hilfsmittel.iso.vs.weid}
  Let $\HS$, $\HS_1$, $\HS_2$ be Hilbert spaces.  Moreover, let
  $\iota_n \in \Iso{\HS_n,\HS}$ be isometries and
  $R_n \in \Lin{\HS_n}$ bounded operators for $n=1,2$.
  $J=\iota_2^*\iota_1$ the following holds:
  \begin{subequations}
    \label{eq:formelsammlung2}
    \begin{align}
      \label{eq:D1}
      P_2 D P_1 & = \iota_2(JR_1-R_2J)\iota_1^*\\
      \label{eq:D2}
      P_2^\perp DP_1 &= (\iota_1-\iota_2 J)R_1\iota_1^*\\
      \label{eq:D3}
      P_2 D^*P_1^\perp &= -\iota_2R_2^*(\iota_2^*-J\iota_1^*).
    \end{align}
    Moreover, we have
    \begin{align}
      \label{eq:somekindofroots}
      (P_2^\perp D P_1)^*(P_2^\perp D P_1)
      &= \iota_1R_1^*(\id-J^*J)R_1\iota_1^* \quad\text{and}\\
      \label{eq:somekindofroots'}
      (P_2 D^* P_1^\perp)(P_2 D^* P_1^\perp)^*
      &= \iota_2R_2^*(\id-JJ^*)R_2\iota_2^*.
    \end{align}
    Finally, for the operator norms we have
    \begin{align}
      \label{eq:que.iso.norm1}
      \norm[\Lin \HS]{P_2DP_1}
      &=\norm[\Lin{\HS_1}]{JR_1-R_2J},\\
      \label{eq:que-mod}
      \normsqr[\Lin \HS]{P_2^\perp DP_1}
      &=\norm[\Lin{\HS_1}]{R_1^*(\id-J^*J)R_1}\quad\text{and}\\
      \label{eq:que-mod'}
      \normsqr[\Lin \HS]{P_1^\perp D P_2}
      &=\norm[\Lin{\HS_2}]{R_2(\id-JJ^*)R_2^*}.
        \intertext{Similar results hold with the indices $1$ and $2$ exchanged
        , e.g.}
      \label{eq:que.iso.norm1'}
        \norm[\Lin \HS]{P_1DP_2}
      &=\norm[\Lin{\HS_2}]{R_1J^*-J^*R_2}
       =\norm[\Lin{\HS_2}]{JR_1^*-R_2^*J} \quad\text{and}\\
      \label{eq:que.iso.norm2'}
        \norm[\Lin \HS]{P_2DP_1^\perp}
      &=\norm[\Lin \HS]{P_1^\perp D^* P_2}
       =\norm[\Lin{\HS_2}]{R_2(\id-JJ^*)R_2^*}.
    \end{align}
  \end{subequations}
\end{proposition}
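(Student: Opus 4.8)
The proposition establishes algebraic identities relating the three-piece decomposition of $D = \iota_1 R_1 \iota_1^* - \iota_2 R_2 \iota_2^*$ to the quantities appearing in $\delta_J$, where $J = \iota_2^* \iota_1$.

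Let me work through what needs proving. We have:
- $P_n = \iota_n \iota_n^*$ (projections onto ranges)
- $P_n^\perp = \id - P_n$
- The key relations $P_n \iota_n = \iota_n$, $\iota_n^* P_n = \iota_n^*$, $P_n^\perp \iota_n = 0$, $\iota_n^* P_n^\perp = 0$

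**Proving the three decomposition formulas (D1, D2, D3):**

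For (D1): $P_2 D P_1$. We have
$$P_2 D P_1 = P_2(\iota_1 R_1 \iota_1^* - \iota_2 R_2 \iota_2^*) P_1.$$
Now $P_2 \iota_1 = \iota_2 \iota_2^* \iota_1 = \iota_2 J$ and $\iota_1^* P_1 = \iota_1^*$. Also $P_2 \iota_2 = \iota_2$ and $\iota_2^* P_1 = \iota_2^* \iota_1 \iota_1^* = J \iota_1^*$. So:
$$P_2 D P_1 = \iota_2 J R_1 \iota_1^* - \iota_2 R_2 J \iota_1^* = \iota_2(JR_1 - R_2 J)\iota_1^*.$$ ✓

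For (D2): $P_2^\perp D P_1$. We have $P_2^\perp \iota_1 = (\id - \iota_2\iota_2^*)\iota_1 = \iota_1 - \iota_2 J$ and $P_2^\perp \iota_2 = 0$. With $\iota_1^* P_1 = \iota_1^*$ and $\iota_2^* P_1 = J\iota_1^*$:
$$P_2^\perp D P_1 = P_2^\perp \iota_1 R_1 \iota_1^* - P_2^\perp \iota_2 R_2 \iota_2^* P_1 = (\iota_1 - \iota_2 J) R_1 \iota_1^* - 0.$$ ✓

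For (D3): $P_2 D^* P_1^\perp$. Here $D^* = \iota_1 R_1^* \iota_1^* - \iota_2 R_2^* \iota_2^*$. We have $\iota_1^* P_1^\perp = 0$ and $\iota_2^* P_1^\perp = \iota_2^*(\id - \iota_1\iota_1^*) = \iota_2^* - J\iota_1^*$. Also $P_2 \iota_1 = \iota_2 J$, $P_2 \iota_2 = \iota_2$:
$$P_2 D^* P_1^\perp = \iota_2 J R_1^* \cdot 0 - \iota_2 R_2^* (\iota_2^* - J\iota_1^*) = -\iota_2 R_2^*(\iota_2^* - J\iota_1^*).$$ ✓

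**The "roots" identities and norm identities:**

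For (somekindofroots): Using (D2), $P_2^\perp D P_1 = (\iota_1 - \iota_2 J)R_1 \iota_1^*$. Let $M = \iota_1 - \iota_2 J$. Then $M^* M = (\iota_1^* - J^*\iota_2^*)(\iota_1 - \iota_2 J) = \iota_1^*\iota_1 - \iota_1^*\iota_2 J - J^*\iota_2^*\iota_1 + J^*\iota_2^*\iota_2 J$. Now $\iota_1^*\iota_1 = \id$, $\iota_2^*\iota_1 = J$ so $\iota_1^*\iota_2 = J^*$, and $\iota_2^*\iota_2 = \id$. Thus $M^*M = \id - J^*J - J^*J + J^*J = \id - J^*J$. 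Therefore:
$$(P_2^\perp D P_1)^*(P_2^\perp D P_1) = \iota_1 R_1^* M^* M R_1 \iota_1^* = \iota_1 R_1^*(\id - J^*J)R_1 \iota_1^*.$$ ✓

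For the norm (que-mod): $\norm{P_2^\perp D P_1}^2 = \norm{(P_2^\perp D P_1)^*(P_2^\perp D P_1)} = \norm{\iota_1 R_1^*(\id - J^*J)R_1 \iota_1^*}$. Since $\iota_1$ is an isometry onto its range, conjugation by $\iota_1$ preserves operator norm (it's unitary onto the range), so this equals $\norm{R_1^*(\id - J^*J)R_1}$. ✓

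For (que.iso.norm1): $\norm{P_2 D P_1} = \norm{\iota_2(JR_1 - R_2 J)\iota_1^*} = \norm{JR_1 - R_2 J}$ since $\iota_2, \iota_1$ are isometries (the isometric conjugation preserves norm). ✓

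The remaining identities follow by symmetry (swapping indices 1↔2 and using $D \to D^*$, $J \to J^*$).

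=== PROOF PROPOSAL ===

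\begin{proof}
The plan is to verify each identity by direct algebraic manipulation, using repeatedly the elementary relations~\eqref{eq:Ps_und_Iotas} together with the isometry conditions $\iota_n^*\iota_n=\id_{\HS_n}$ and the definition $J=\iota_2^*\iota_1$ (whence also $\iota_1^*\iota_2=J^*$).

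First I would establish the three decomposition formulas~\eqref{eq:D1}--\eqref{eq:D3}. For~\eqref{eq:D1} I compute $P_2DP_1$ by inserting $D=\iota_1R_1\iota_1^*-\iota_2R_2\iota_2^*$ and using $P_2\iota_1=\iota_2\iota_2^*\iota_1=\iota_2 J$, $\iota_1^*P_1=\iota_1^*$, $P_2\iota_2=\iota_2$ and $\iota_2^*P_1=J\iota_1^*$; collecting terms yields $\iota_2(JR_1-R_2J)\iota_1^*$. For~\eqref{eq:D2} I use $P_2^\perp\iota_1=\iota_1-\iota_2 J$ together with $P_2^\perp\iota_2=0$, so that the second summand of $D$ drops out, leaving $(\iota_1-\iota_2 J)R_1\iota_1^*$. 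For~\eqref{eq:D3} I apply the same idea to $D^*=\iota_1R_1^*\iota_1^*-\iota_2R_2^*\iota_2^*$, using $\iota_1^*P_1^\perp=0$ (which kills the first summand) and $\iota_2^*P_1^\perp=\iota_2^*-J\iota_1^*$.

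Next I would derive the ``root'' identities~\eqref{eq:somekindofroots} and~\eqref{eq:somekindofroots'}. Writing $P_2^\perp DP_1=(\iota_1-\iota_2 J)R_1\iota_1^*$ from~\eqref{eq:D2}, the central computation is $(\iota_1-\iota_2 J)^*(\iota_1-\iota_2 J)=\id-J^*J$, which follows at once by expanding the product and inserting $\iota_1^*\iota_1=\id$, $\iota_2^*\iota_1=J$, $\iota_1^*\iota_2=J^*$, $\iota_2^*\iota_2=\id$. Hence $(P_2^\perp DP_1)^*(P_2^\perp DP_1)=\iota_1R_1^*(\id-J^*J)R_1\iota_1^*$, and~\eqref{eq:somekindofroots'} is obtained analogously from~\eqref{eq:D3} using $(\iota_2^*-J\iota_1^*)(\iota_2^*-J\iota_1^*)^*=\id-JJ^*$.

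Finally I would read off the norm identities. For~\eqref{eq:que.iso.norm1} the point is that conjugation by the isometries $\iota_1,\iota_2$ preserves the operator norm, since each $\iota_n$ is unitary onto its range; thus $\norm{\iota_2(JR_1-R_2J)\iota_1^*}=\norm{JR_1-R_2J}$. For~\eqref{eq:que-mod} and~\eqref{eq:que-mod'} I use the $C^*$-identity $\norm T^2=\norm{T^*T}$ applied to $T=P_2^\perp DP_1$ (resp.\ $T^*=P_2 D^*P_1^\perp$), combine it with the root identities just proved, and again discard the norm-preserving isometric conjugation. The remaining formulas with indices exchanged follow by the same arguments after swapping the roles of $1$ and $2$ and passing to adjoints (using $\norm D=\norm{D^*}$ and that the adjoint replaces $J$ by $J^*$). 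I do not anticipate a genuine obstacle here: the only mild care needed is to track which isometry conjugation is norm-preserving and to keep the order of factors straight in the non-self-adjoint terms, where the third and fourth slots of $\delta_J$ in~\eqref{eq:delta_J} must be matched correctly against~\eqref{eq:que.iso.norm1} and~\eqref{eq:que.iso.norm1'}.
\end{proof}
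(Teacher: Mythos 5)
Your proposal is correct and follows essentially the same route as the paper's own proof: the same insertion of $\iota_k^*\iota_k=\id_{\HS_k}$ and the relations $P_n\iota_n=\iota_n$, $P_n^\perp\iota_n=0$ to get \eqref{eq:D1}--\eqref{eq:D3}, the same expansion $(\iota_1-\iota_2J)^*(\iota_1-\iota_2J)=\id-J^*J$ for the root identities, and the same use of $\norm{\iota_2A\iota_1^*}=\norm{A}$ together with the $C^*$-identity $\normsqr{T}=\norm{T^*T}$ for the norm equalities, with the index-swapped cases handled via $J^*=\iota_1^*\iota_2$ and adjoints. No gaps.
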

\begin{remark}
  \label{rem:isometries.wrong.side}
  If we simply multiply~\eqref{eq:D2} by $\iota_1^*$ from the left and
  $\iota_1$ from the right, then we obtain
  \begin{align*}
    \iota_1^*P_2^\perp D P_1\iota_1
    =(\id-J^*J)R_1.
  \end{align*}
  Taking the operator norm, we only get
  $\norm{(\id-J^*J)R_1} =\norm{\iota_1^*P_2^\perp D P_1\iota_1} \le
  \norm{P_2^\perp D P_1}$, but we (also) need the inequality ``$\ge$''
  which does not hold in general here. Note that if the isometry and
  co-isometry are on the other sides, we have equality of the norms,
  see~\eqref{eq:D1}, \eqref{eq:que.iso.norm1}
  and~\eqref{eq:iso.coiso}.  This is why we pass to the more
  complicated expressions in~\eqref{eq:somekindofroots}
  and~\eqref{eq:somekindofroots'}, where the isometries and
  co-isometries are on the ``good'' side.
\end{remark}
\begin{proof}[Proof of \Prp{hilfsmittel.iso.vs.weid}]
  For the first equation we simply multiply the given expressions by
  $\iota_k^*\iota_k= \id_{\HS_k}$ ($k=1,2$).
  \begin{equation*}
    P_2 D P_1  =\iota_2\iota_2^*(\iota_1 R_1 \iota_1^*-\iota_2 R_2 \iota_2^*)\iota_1\iota_1^*
    = \iota_2(JR_1-R_2J)\iota_1^*
  \end{equation*}
  To show~\eqref{eq:D2} we use~\eqref{eq:Ps_und_Iotas} and obtain
  \begin{equation*}
    P_2^\perp DP_1
    =(\id_\HS-\iota_2\iota_2^*)\iota_1 R_1 \iota_1^*
    = (\iota_1-\iota_2 J)R_1\iota_1^*.
  \end{equation*}
  The last equation~\eqref{eq:D3} can be shown similarly.  The
  equalities in~\eqref{eq:somekindofroots} follow from
  \begin{align*}
    (\iota_1^*-J^*\iota_2^*)(\iota_1-\iota_2 J)
    =\iota_1^*\iota_1-J^*\iota_2^*\iota_1-\iota_1^*\iota_2 J+J\iota_2^*\iota_2J
    =\id-J^*J
  \end{align*}
  and similarly for the second one.  For the first norm equality note
  that
  \begin{align}
    \label{eq:iso.coiso}
    \norm{\iota_2A\iota_1^*}=\norm A
  \end{align}
  for any $A \in \Lin{\HS_1,\HS_2}$.  The last two norm equalities, we
  apply the $C^*$-norm equality $\norm{A^*A}=\norm{AA^*}=\normsqr A$
  to~\eqref{eq:somekindofroots} and~\eqref{eq:somekindofroots'}.

  For~\eqref{eq:que.iso.norm1'} and similar assertions note that
  $J^*=\iota_1^*\iota_2$ (i.e., interchanging the indices $1$ and $2$
  interchanges $J$ and $J^*$) and that
  \begin{align*}
    \norm[\Lin{\HS_2}]{JR_1^*-R_2^*J}
    =\norm[\Lin{\HS_1}]{J^*R_1-R_2J^*}
    =\norm[\Lin\HS]{P_1DP_2}
  \end{align*}
  by~\eqref{eq:que.iso.norm1} with interchanged indices.
\end{proof}

We now prove our second main result:
\begin{proof}[Proof of \Thm{main-3}]
  \myparagraph{First inequality $\dQUE \le \dIso$:} Let $\eps>0$. Then
  there exists $(\HS,\iota_1,\iota_2)$ such that
  \begin{equation}
    \label{ineq:fromdefinf}
    \norm[\Lin \HS]{D}
    :=\norm[\Lin \HS]{D_{\HS,\iota_1,\iota_2}(R_1,R_2)}< \dIso(R_1,R_2)  + \eps.
  \end{equation}
  Now we set $J=\iota_2^*\iota_1$ and calculate the four norms
  appearing in the definition of $\delta_J(R_1,R_2)$
  in~\eqref{eq:delta_J}.  For the first term we conclude
  from~\eqref{eq:que-mod} that
  \begin{align*}
    \norm[\Lin{\HS_1}]{R_1^*(\id-J^*J)R_1}
    &= \normsqr[\Lin \HS]{P_2^\perp DP_1}\leq \norm[\Lin \HS]{D}^2.
  \end{align*}
  Similarly, we see
  $\norm[\Lin{\HS_2}]{R_2^*(\id-JJ^*)R_2}= \normsqr[\Lin
  \HS]{P_1^\perp DP_2}\leq \normsqr[\mathfrak{R_2}(\HS)]{D}$ and the
  fourth norm in the definition of $\delta_J(R_1,R_2)$ can be
  estimated as
  \begin{equation*}
    \norm[\Lin{\HS_1,\HS_2}]{JR_1-R_2J}
    = \norm[\Lin \HS]{\iota_2(JR_1-R_2J)\iota_1^*}
    =\norm[\Lin \HS]{P_2DP_1}
    \leq \norm[\Lin \HS]{D}.
  \end{equation*}
  The last norm can be treated similarly
  using~\eqref{eq:que.iso.norm1'}.  In particular, we have shown
  \begin{align*}
    \dQUE(R_1,R_2)
    \le \delta_J(R_1,R_2)
    \le \norm[\Lin \HS] D
    < \dIso(R_1,R_2) + \eps.
  \end{align*}
  As $\eps>0$ is arbitrary, the desired inequality is shown.

  \myparagraph{Second inequality $\dIso \le \sqrt 3\dQUE$:} We show
  that for every contraction $J$ with $\delta_J(R_1,R_2) \le \delta$
  there exists a triple $(\HS, \iota_1, \iota_2)$ such that
  $\norm[\Lin \HS]{D_{\HS,\iota_1,\iota_2}(R_1,R_2)}\leq \sqrt 3 \delta$.
  This inequality also implies the inequality for the infimum over all
  such $J$'s.

  We construct the triple in the following way. The construction is
  inspired by~\cite{szfbk:10}. We set
  \begin{align}
    \label{eq:constr-nagy}
    \HS
    &:= \HS_1\oplus\HS_2
      \quadtext{and denote}
      f=(f_1,f_2)
      \quadtext{for}
      f\in \HS
  \end{align}
  equipped with the natural norm defined by
  $\normsqr[\HS]{f}=\normsqr[\HS_1]{f_1}+\normsqr[\HS_2]{f_2}$.  Moreover, we set
  \begin{align*}
    \map{\iota_1&}{\HS_1}{\HS},
    & \iota_1f_1&:=(W_1 f_1,Jf_1), \quad
                      W_1:=(\id_{\HS_1}-J^*J)^{1/2}
                  \quad\text{and}\\
    \map{\iota_2&}{\HS_2}{\HS},
    & \iota_2f_2&:=(0,f_2).
  \end{align*}
  Note that $W_1$ is well-defined as $\norm J \le 1$.  Then it is
  easily seen that $\iota_1$ and $\iota_2$ are isometries and that the
  identification operator factorises $Jf_1=\iota_2^*\iota_1f_1$. Using
  the equations~\eqref{eq:que.iso.norm1}--\eqref{eq:que-mod'}, we
  estimate
  \begin{align}
    \nonumber
    \normsqr{D_{\HS,\iota_1,\iota_2}(R_1,R_2)f}
    &=\normsqr{(P_2 D P_1 + P_2 D P_1^\perp)f + P_2^\perp D P_1f}\\
    \nonumber
    &=\normsqr{(P_2 D P_1 + P_2 D P_1^\perp)f} + \normsqr{P_2^\perp D P_1f}\\
    \nonumber
    &\leq 2\bigl(\normsqr{(P_2 D P_1)(P_1f)}
      + \normsqr{(P_2 D P_1^\perp)(P_1^\perp f)}\bigr)
      + \normsqr[\Lin \HS]{P_2^\perp D P_1}\normsqr f\\
    \nonumber
    &\le 2\max\bigl\{\normsqr[\Lin{\HS_1,\HS_2}]{JR_1-R_2J},
      \norm[\Lin{\HS_2}]{R_2^*(\id-JJ^*)R_2}\bigr\}\normsqr f\\
    \nonumber
    &\hspace*{0.35\textwidth}
      + \norm[\Lin{\HS_1}]{R_1^*(\id-J^*J)R_1}\bigr)\normsqr f\\
    \label{eq:2nd.ineq.b}
    &\leq (2\delta^2+\delta^2)\normsqr f
      =3 \delta^2 \normsqr f
  \end{align}
  using Pythagoras two times.  In particular, we have shown
  $\dIso(R_1,R_2) \le \sqrt 3 \dQUE(R_1,R_2)$.
\end{proof}

\begin{remark}[sharpness of estimate isometric versus quasi-unitary
  distance]
  \label{rem:diso.dque.sharp}
  We doubt that the estimate
  $\dIso(R_1,R_2) \le \sqrt 3 \dQUE(R_1,R_2)$ is sharp, but we do not
  have a better estimate.  We only have a lower bound on the ratio
  $\dIso(R_1,R_2)/\dQUE(R_1,R_2)$ for some concrete $R_1$ and $R_2$,
  namely $\sqrt 2$ (see \Sec{examples} and \Cor{at.least.sqrt2}).
  Calculating $\dQUE(R_1,R_2)$ even for simple operators $R_1$ and
  $R_2$ seems to be rather complicated in general cases due to its
  generality.  Probably some estimates for variants of $\dQUE$
  provided in a forthcoming publication~\cite{post-zimmer:pre24c}
  might help.

  Ideally, we look for an equivalent variant $\dQUEvar$ of $\dQUE$
  for which  we have $\dQUEvar(R_1,R_2)=\dIso(R_1,R_2)$,
  but we are not sure whether such a variant exists.  At least,
  \Prps{diso.mult.id}{dque.mult.id} show that with our choice of
  $\delta_J(R_1,R_2)$ in the definition of $\dQUE$, this is not the
  case.
\end{remark}

\begin{remark}[Hausdorff distance of spectra versus quasi-unitary distance]
  \label{rem:dque.spec}
  For self-adjoint operators $R_n$ with $0 \in \essspec {R_n}$
  ($n=1,2$), we conclude from \ThmS{main-1}{main-3} that
  \begin{align}
    \label{eq:dhaus.dque}
    \dHaus(\spec{R_1},\spec{R_2})
    \le \dUni(R_1,R_2)
    \le \dIso(R_1,R_2)
    \le \sqrt 3 \dQUE(R_1,R_2)
  \end{align}
  using the strong result \Thm{azoff-davis-duni} taken
  from~\cite{azoff-davis:84}.  We now comment on a direct proof of
  this estimate in terms of a variant of $\dQUE(R_1,R_2)$
  using~\cite[Thm.~3.4]{khrabustovskyi-post:21}.  We assume here that
  $R_n$ is non-negative and injective operator, e.g., it is the
  resolvent of a non-negative operator $A_n$, namely
  $R_n=(A_n+1)^{-1}$ for $n=1,2$.  Using
  $\norm{R_1^{1/2}(\id-J^*J)R_1^{1/2}}$ instead of
  $\norm{R_1^*(\id-J^*J)R_1}^{1/2}$ in~\eqref{eq:delta_J} then
  $\delta_J(R_1,R_2) \le \delta$ implies
  \begin{align*}
    \norm{R^{1/2}(\id-J^*J)R_1^{1/2}}
    =\sup_{g_1 \in \HS_1}
    \frac{\normsqr[\HS_1] {R_1^{1/2}g_1} - \normsqr[\wt \HS]{J_1R_1^{1/2}g_1}}%
    {\normsqr[\HS_1] {g_1}}
    =\sup_{f_1 \in \HS_1^1}
    \frac{\normsqr[\HS_1] {f_1} - \normsqr[\HS_2]{Jf_1}}%
    {\normsqr[\HS_1^1] {f_1}}\le \delta
  \end{align*}
  with $f_1=R_1^{1/2}g_1$, where $\HS_1^1:=\dom R_1^{-1/2}$ and
  $\norm[\HS_1^1] {f_1}:=\norm[\HS_1]{R_1^{-1/2}{f_1}}$ denotes the
  quadratic form norm of $\qf a_1 (f_1)=\normsqr[\HS_1]{A_1^{1/2}}$.
  In particular, we conclude
  \begin{align*}
    \normsqr[\HS_1] {f_1}
    \le \mu_1 \normsqr[\HS_2]{Jf_1} + \nu_1 \qf a_1(f_1)
    \quadtext{with}
    \mu_1=1+\frac{\delta^2}{1-\delta^2}
    \quadtext{and}
    \nu_1 = \frac{\delta^2}{1-\delta^2}.
  \end{align*}
  A similar argument holds for $R_2$ and $J$ replaced by $J^*$.  In
  particular, the assumptions of Thm.~3.1
  of~\cite{khrabustovskyi-post:21} are now fulfilled and we conclude
   \begin{align*}
     \dHaus(\spec{R_1},\spec{R_2})
     &\le \max
     \Bigl\{
     \delta \sqrt{\frac1\kappa\Bigl(1+\frac{\delta^2}{1-\delta^2}\Bigr)},
     \frac{\delta^2}{(1-\kappa)(1-\delta^2)}
     \Bigr\}\\
     &=\max
     \Bigl\{
     \frac1{\sqrt \kappa}\Bigl(\delta + \Err(\delta^2)\Bigr),
     \frac1{1-\kappa}\bigl(\delta^2 + \Err(\delta^4)\bigr)
     \Bigr\}
   \end{align*}
   for any $\kappa \in (0,1)$.  Note that we used in the proof of
   \cite[Thm.~3.4]{khrabustovskyi-post:21} that $0$ is in the spectrum
   of $\spec{R_n}$.  The error is of optimal order
   $\delta + \Err(\delta^2)$ for $\kappa=1-\delta$ and
   $\delta \in [0,1)$.  In particular, taking the infimum as in
   $\dQUE$ (with the modified version $\dQUEvar$ of
   $\delta_J(R_1,R_2)$ as above), we obtain
   \begin{align*}
     \dHaus(\spec{R_1},\spec{R_2})
     \le \dQUEvar(R_1,R_2) + \Err(\dQUEvar(R_1,R_2)^2).
   \end{align*}
   This result suggests that the factor $\sqrt 3$
   in~\eqref{eq:dhaus.dque} seems not to be optimal, at least for our
   version of $\dQUE$ (see also \Rem{diso.dque.sharp}).  We will
   comment on related questions in a forthcoming
   publication~\cite{post-zimmer:pre24c}.
\end{remark}

\section{Examples of distances}
\label{sec:examples}
Here we give some examples in which one can actually calculate the
various distances between operators.

\subsection{Multiples of the identity}
In this subsection, we analyse the various distances for
$R_n=r_n \id_{\HS_n}$ with $r_n \in \C$ ($n=1,2$).  Trivially, we have
\begin{equation}
  \label{eq:duni.mult.id}
  \dUni(r_1 \id_{\HS_1},r_2 \id_{\HS_2})
  = \abs{r_1-r_2}
\end{equation}
as $U_{12} R_1 U_{12}^*= r_1 U_{12} U_{12}^*=r_1 \id_{\HS_2}$ for
$U_{12} \in \Unitary{\HS_1,\HS_2}$.

The calculations for $\dIso$ are actually more involved:
\begin{proposition}[isometric distance]
  \label{prp:diso.mult.id}
  For $r_1,r_2 \in \C$, we have
  \begin{equation}
    \label{eq:diso.mult.id}
    \dIso(r_1 \id_{\HS_1},r_2 \id_{\HS_2})
    = \min\{\abs{r_1-r_2},\max\{\abs{r_1},\abs{r_2}\}\}.
  \end{equation}
  In particular, if $r_1,r_2 \in \R$ then
  \begin{itemize}
  \item for $r_1 \cdot r_2 \ge 0$ we have
    $ \dIso(r_1 \id_{\HS_1},r_2 \id_{\HS_2}) = \abs{r_1-r_2}=\dUni(r_1
    \id_{\HS_1},r_2 \id_{\HS_2})$;
  \item for $r_1 \cdot r_2 < 0$ we have
    $ \dIso(r_1 \id_{\HS_1},r_2 \id_{\HS_2}) = \max\{\abs{r_1},
    \abs{r_2}\} < \dUni(r_1 \id_{\HS_1},r_2 \id_{\HS_2})$.
\end{itemize}
\end{proposition}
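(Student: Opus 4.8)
The plan is to reduce the computation to a statement about two orthogonal projections. Since $R_n = r_n \id_{\HS_n}$, any isometry $\iota_n \in \Iso{\HS_n,\HS}$ yields $\iota_n R_n \iota_n^* = r_n \iota_n\iota_n^* = r_n P_n$, where $P_n := \iota_n\iota_n^*$ is the orthogonal projection onto the range of $\iota_n$. Hence the difference operator is $D_{\HS,\iota_1,\iota_2}(r_1\id,r_2\id) = r_1 P_1 - r_2 P_2$, and computing $\dIso$ amounts to minimising $\norm[\Lin\HS]{r_1 P_1 - r_2 P_2}$ over all pairs of projections $P_1,P_2$ on a common space $\HS$ with $\dim \ran P_n = \dim \HS_n$. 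I treat the two inequalities separately.

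For the upper bound I exhibit two explicit configurations. First, choosing $\HS = \HS_2$ together with a unitary $\iota_1 = U_{12}$ and $\iota_2 = \id$ (so that $P_1 = P_2 = \id$, which is available whenever $\dim\HS_1 = \dim\HS_2$) gives $D = (r_1 - r_2)\id$ and hence $\norm D = \abs{r_1 - r_2}$. Second, embedding both spaces orthogonally into $\HS = \HS_1 \oplus \HS_2$ via the canonical inclusions makes $P_1$ and $P_2$ mutually orthogonal, so that $D$ is block-diagonal with blocks $r_1\id_{\HS_1}$ and $-r_2\id_{\HS_2}$ and thus $\norm D = \max\{\abs{r_1},\abs{r_2}\}$. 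Taking the smaller of the two bounds gives $\dIso \le \min\{\abs{r_1-r_2}, \max\{\abs{r_1},\abs{r_2}\}\}$.

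For the lower bound I use the spectral estimate. For any isometries the operators $A_n := r_n P_n$ are normal, and by \Lem{spectra.of.isometries} their spectra are $\spec{A_n} = \{r_n\}$ or $\{0, r_n\}$, according to whether $\iota_n$ is surjective. The Hausdorff bound $\norm{A_1 - A_2} \ge \dHaus(\spec{A_1},\spec{A_2})$ — which is \Eq{herbst-nakamura} for self-adjoint, hence real, $r_n$, and extends verbatim to the normal operators $r_n P_n$ for complex $r_n$ — then bounds $\norm D$ from below. A short computation of the four point-to-set distances shows $\dHaus(\{0,r_1\},\{0,r_2\}) = \min\{\abs{r_1 - r_2}, \max\{\abs{r_1},\abs{r_2}\}\}$, while the remaining cases (one or both spectra a singleton $\{r_n\}$) only make the Hausdorff distance larger; thus $\dIso \ge \min\{\abs{r_1-r_2}, \max\{\abs{r_1},\abs{r_2}\}\}$, which together with the upper bound proves \eqref{eq:diso.mult.id}. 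The two real sub-cases then follow at once by comparison with \eqref{eq:duni.mult.id}: for $r_1 r_2 \ge 0$ one has $\abs{r_1 - r_2} \le \max\{\abs{r_1},\abs{r_2}\}$, so the minimum equals $\abs{r_1 - r_2} = \dUni$; whereas for $r_1 r_2 < 0$ one has $\abs{r_1 - r_2} = \abs{r_1} + \abs{r_2} > \max\{\abs{r_1},\abs{r_2}\}$, so the minimum equals $\max\{\abs{r_1},\abs{r_2}\} < \dUni$.

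The main obstacle is twofold. The delicate point in the upper bound is the first configuration: realising the value $\abs{r_1-r_2}$ needs a common range $P_1 = P_2$, which is only available when $\dim\HS_1 = \dim\HS_2$, for otherwise a unit vector in $\ran P_2 \ominus \ran P_1$ (or the reverse) survives on which $D$ acts as $-r_2$ (resp.\ $r_1$); the clean formula is to be read under the standing assumption of equal dimension. The second point is to ensure that no intermediate relative position of the two ranges beats both extreme configurations — this is precisely what the normal-operator spectral lower bound rules out, so that the infimum is genuinely realised (in the limit) at one of the two explicit configurations and not at some interior angle of the associated two-projection (CS-)decomposition.
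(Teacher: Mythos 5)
Your proof is correct, and its lower bound takes a genuinely different route from the paper's. The paper computes $\norm{r_1P_1-r_2P_2}$ \emph{exactly} for an arbitrary pair of projections via Halmos' two-projections theorem, representing the generic part by $2\times 2$ blocks $M_{r_1,r_2}(\theta)$ and then minimising over configurations; you instead get the lower bound from the spectral estimate $\norm{A_1-A_2}\ge \dHaus(\spec{A_1},\spec{A_2})$ applied to the normal operators $A_n=r_nP_n$, whose spectra you read off from \Lem{spectra.of.isometries}, combined with the lattice identity $\max\{\min\{a,c\},\min\{b,c\}\}=\min\{\max\{a,b\},c\}$ for $a=\abs{r_1}$, $b=\abs{r_2}$, $c=\abs{r_1-r_2}$. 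Your route is more elementary and, as you observe, settles the ``intermediate angle'' worry for free, since the spectral bound is configuration-independent, whereas the paper must argue that the eigenvalues of $M_{r_1,r_2}(\theta)$ never dip below both endpoint values. Two caveats. First, \eqref{eq:herbst-nakamura} is stated in the paper only for self-adjoint operators; for complex $r_n$ you need its extension to normal operators, which is true and short (use $\norm{(A-z)^{-1}}=\dist(z,\spec A)^{-1}$ for normal $A$ and a Neumann series), but it does not follow ``verbatim'' from the cited lemma and should be justified --- for real $r_n$, the only case needed in the two bulleted sub-cases, the operators $r_nP_n$ are self-adjoint and the cited version applies directly. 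Second, your dimension caveat is well taken: if $\dim\HS_1\ne\dim\HS_2$ (so one space is finite-dimensional) the configuration $P_1=P_2$ is unavailable and the stated formula can genuinely fail (e.g.\ $r_1=r_2=1$, $\dim\HS_1=1$, $\dim\HS_2=2$ forces $\norm{P_1-P_2}\ge 1$ while the right-hand side of \eqref{eq:diso.mult.id} is $0$); the paper's own proof makes the same implicit assumption when it sets $P_1=P_2$, so you are, if anything, more careful than the source on this point.
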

\newcommand{\perpsymb}[1]{\check#1}
\begin{proof}
  We keep the notation of the previous sections.  We first note that
  \begin{align*}
    D_{\HS,\iota_1,\iota_2}(r_1 \id_{\HS_1},r_2 \id_{\HS_2})
    =r_1P_1-r_2P_2
  \end{align*}
  where $P_n=\iota_n\iota_n^*$ are the orthogonal projections (ONPs)
  onto the range of $\iota_n$ in $\HS$.  In particular, we have
  \begin{align*}
    \dIso(r_1 \id_{\HS_1},r_2 \id_{\HS_2})
    = \inf \bigset{\norm{r_1P_1-r_2P_2}}%
    {\text{$P_n$ ONP, $\dim \ran P_n =\dim \HS_n$, $n=1,2$}}.
  \end{align*}
  We now use Halmos' two projections theorem~\cite{halmos:69} (see
  also~\cite{boettcher-spitkovsky:10}) to calculate the norm of
  $r_1P_1-r_2P_2$: We set $\HS_n := \ran P_n$ and
  $\HS_{\perpsymb n} := (\ran P_n)^\perp$.  Moreover, we
  set\footnote{Here, $\HS \ominus \HS_1$ denotes the orthogonal
    complement of $\HS_1$ in $\HS$.}
  \begin{align*}
    \HS_{jk}
    &:=\HS_j \cap \HS_k,
    &\HS_{10}
    &:= \HS_1 \ominus (\HS_{12} \oplus \HS_{1\perpsymb2}),
    &\HS_{\perpsymb10}
    &:= \HS_{\perpsymb1} \ominus (\HS_{\perpsymb12} \oplus \HS_{\perpsymb1\perpsymb2})
  \end{align*}
  for $j,k \in \{1,\perpsymb1,2,\perpsymb 2\}$.  In particular, we
  have the orthogonal splitting
  \begin{align*}
    \HS &= \HS_{12}
    \oplus \HS_{1\perpsymb2}
    \oplus \HS_{\perpsymb12}
    \oplus \HS_{\perpsymb1\perpsymb2}
    \oplus \HS_{10}
    \oplus \HS_{\perpsymb10}.
  \end{align*}
  If the last two spaces $\HS_{10}$ and $\HS_{\perpsymb10}$ are
  trivial, then we can represent $P_1$ and $P_2$ in a block matrix
  representation as in~\cite{boettcher-spitkovsky:10} as
  \begin{subequations}
    \begin{align}
      \label{eq:two.projections}
      P_1 = 1 \oplus 1 \oplus 0 \oplus 0
      \qquadtext{and}
      P_2 = 1 \oplus 0 \oplus 1 \oplus 0
    \end{align}
    where $1$ resp.\ $0$ represent the identity resp.\ null operator
    on the corresponding component.  If one of the spaces $\HS_{10}$
    or $\HS_{\perpsymb10}$ is nontrivial, then they are unitarily
    equivalent, i.e.\ there is a unitary operator
    $\map{U_0}{\HS_{\perpsymb10}}{\HS_{10}}$.  Moreover, Halmos'
    result~\cite{halmos:69} reads as
    \begin{align}
      \nonumber
      P_1 &\cong (1 \oplus 1 \oplus 0 \oplus 0) \oplus
      \begin{pmatrix}
        1 & 0\\ 0 & U_0^*
      \end{pmatrix}
      \begin{pmatrix}
        1 & 0\\ 0 & 0
      \end{pmatrix}
      \begin{pmatrix}
        1 & 0\\ 0 & U_0
      \end{pmatrix}\\
      \label{eq:two.projections'}
      P_2 &\cong (1 \oplus 0 \oplus 1 \oplus 0) \oplus
      \begin{pmatrix}
        1 & 0\\ 0 & U_0^*
      \end{pmatrix}
      \begin{pmatrix}
        C^2 & CS\\ CS & S^2
      \end{pmatrix}
      \begin{pmatrix}
        1 & 0\\ 0 & U_0
      \end{pmatrix},
    \end{align}
  \end{subequations}
  where the first four components are the diagonal entries of a block
  matrix operator.  Moreover, the fifth and sixth component act as
  given by the $2 \times 2$-matrix, and $C,S$ are two bounded
  operators on $\HS_{10}$ such that $C^2+S^2=\id_{\HS_{10}}$.  It
  follows that $CS=SC$, $\spec C, \spec S \subset [0,1]$ and
  $\cos \theta \in \spec C$ if and only if $\sin\theta \in \spec S$
  for some $\theta \in [0,\pi/2]$.

  Now, $r_1P_1-r_2 P_2$ is represented by
  \begin{align*}
    ((r_1-r_2)\oplus r_1 \oplus -r_2 \oplus 0)
    \oplus
    \begin{pmatrix}
      1 & 0\\ 0 & U_0^*
    \end{pmatrix}
    \begin{pmatrix}
      r_1-r_2C^2 & -r_2CS\\ -r_2CS & -r_2S^2
    \end{pmatrix}
    \begin{pmatrix}
      1 & 0\\ 0 & U_0
    \end{pmatrix},
  \end{align*}
  and its operator norm is
  \begin{align*}
    &\max
    \Bigl\{\abs{r_1-r_2},
    \abs{r_1},
    \abs{r_2},
    \max \Bigset{\abs{\lambda}}
    {\text{$\lambda$ eigenvalue of $M_{r_1,r_2}(\theta)$},
    \theta \in [0,\pi/2]
    }
    \Bigr\}, \quad\text{were}\\
    &M_{r_1,r_2}(\theta)
    =\begin{pmatrix}
      r_1-r_2(\cos \theta)^2 & -r_2\cos \theta \sin \theta\\
      -r_2\cos \theta \sin \theta & -r_2(\sin \theta)^2
    \end{pmatrix}.
  \end{align*}
  The eigenvalues of the latter matrix are
  \begin{align*}
    \lambda
    = \frac12\cdot(r_1-r_2) \pm
    \frac12 \sqrt{(r_1-r_2)^2+4r_1r_2 (\sin \theta)^2}.
  \end{align*}
  The maximal values of $\abs \lambda$ are
  $\abs{r_1-r_2}$ if $\theta=0$ and $\max\{\abs{r_1},\abs{r_2}\}$ if
  $\theta=\pi/2$.  In particular, we have
  \begin{align*}
    \norm{r_1P_1-r_2P_2}
    = \max
    \bigl\{\abs{r_1-r_2},
    \abs{r_1},
    \abs{r_2}\bigr\}
  \end{align*}
  We now take the infimum over all possible projections $P_1$ and
  $P_2$.  If $P_1=P_2$, then this norm is actually $\abs{r_1-r_2}$.
  If their ranges are orthogonal, then
  $\norm{r_1P_1-r_2P_2}=\max\{\abs{r_1},\abs{r_2}\}$.  This proves the
  claim.
\end{proof}

\begin{proposition}[quasi-unitary distance]
  \label{prp:dque.mult.id}
  For $r_1,r_2 \in \C$, we have
  \begin{equation}
    \label{eq:dque.mult.id}
    \dQUE(r_1,r_2)
    := \dQUE(r_1 \id_{\HS_1},r_2 \id_{\HS_2})
    = \Bigl(\frac1{\abssqr{r_1-r_2}}
    + \frac 1{\max\{\abssqr {r_1},\abssqr {r_2}\}}\Bigr)^{-1/2}
  \end{equation}
  if $r_1 \ne r_2$ and $\dQUE(r_1 \id_{\HS_1},r_2 \id_{\HS_2}) =0$
  otherwise.  Moreover, $\dQUE$ on $\C$ is homogeneous, i.e.,
  $\dQUE(\alpha r_1,\alpha r_2)=\abs\alpha \dQUE(r_1,r_2)$ for all
  $\alpha,r_1,r_2 \in \C$.
\end{proposition}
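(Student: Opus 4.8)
The plan is to make $\delta_J(R_1,R_2)$ explicit for $R_n=r_n\id_{\HS_n}$, then to pin the infimum over $J$ down by matching an upper and a lower bound, and finally to handle the metric, homogeneity and comparison claims. First I would substitute. Since $R_n^*=\conj{r_n}\id_{\HS_n}$, the four entries of~\eqref{eq:delta_J} become $R_1^*(\id-J^*J)R_1=\abssqr{r_1}(\id_{\HS_1}-J^*J)$ and $R_2^*(\id-JJ^*)R_2=\abssqr{r_2}(\id_{\HS_2}-JJ^*)$, while $JR_1-R_2J=(r_1-r_2)J$ and $JR_1^*-R_2^*J=(\conj{r_1}-\conj{r_2})J$ both have norm $\abs{r_1-r_2}\norm J$. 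As $\norm J\le 1$, the operators $\id-J^*J$ and $\id-JJ^*$ are positive contractions, so
\begin{equation*}
  \delta_J(R_1,R_2)
  =\max\bigl\{\abs{r_1}\norm{\id_{\HS_1}-J^*J}^{1/2},\;
  \abs{r_2}\norm{\id_{\HS_2}-JJ^*}^{1/2},\;
  \abs{r_1-r_2}\norm J\bigr\}.
\end{equation*}
Write $d=\abs{r_1-r_2}$, $M=\max\{\abs{r_1},\abs{r_2}\}$ and let $V$ denote the right-hand side of~\eqref{eq:dque.mult.id}.

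For the upper bound I would test $J=tU$ with $t\in[0,1]$ and $U\in\Unitary{\HS_1,\HS_2}$ (such a $U$ exists once $\HS_1$ and $\HS_2$ have equal dimension, in particular when both are infinite-dimensional and separable). Then $J^*J=t^2\id_{\HS_1}$, $JJ^*=t^2\id_{\HS_2}$ and $\norm J=t$, so the maximum above collapses to $\max\{M\sqrt{1-t^2},\,dt\}$. The first term decreases and the second increases in $t$, so the minimum is attained where they agree, i.e.\ at $t^2=M^2/(M^2+d^2)$, giving common value $dM/\sqrt{d^2+M^2}$, whose square is $(d^{-2}+M^{-2})^{-1}=V^2$. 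Hence $\dQUE(R_1,R_2)\le V$; and for $r_1=r_2$ the choice $t=1$ (that is, $J=U$) yields $\delta_J=0$.

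For the lower bound, fix any contraction $J$ and put $\delta=\delta_J(R_1,R_2)$. Since $0\le J^*J\le\id$, we have $\norm{\id-J^*J}=1-\mu_1$ where $\mu_1:=\min\spec{J^*J}$ is the bottom of the spectrum, so the first entry gives $\mu_1\ge 1-\delta^2/\abssqr{r_1}$; symmetrically $\mu_2:=\min\spec{JJ^*}\ge 1-\delta^2/\abssqr{r_2}$ from the second, while the third gives $\normsqr J\le\delta^2/d^2$. Writing $T$ for whichever of $J^*J,JJ^*$ carries the coefficient $M$, its spectrum is a nonempty compact subset of $[0,1]$, so its bottom $\mu$ satisfies $\mu\le\max\spec T=\normsqr J$; hence $1-\delta^2/M^2\le\mu\le\normsqr J\le\delta^2/d^2$, which forces $\delta^2\ge(d^{-2}+M^{-2})^{-1}=V^2$. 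Thus $\dQUE(R_1,R_2)\ge V$, and together with the upper bound we obtain equality. Note that this half needs no assumption on the dimensions.

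It remains to verify the assertions about the function $F(r_1,r_2)$ given by the right-hand side of~\eqref{eq:dque.mult.id}. Homogeneity is immediate, since replacing $(r_1,r_2)$ by $(\alpha r_1,\alpha r_2)$ multiplies both $d$ and $M$ by $\abs\alpha$; positivity, symmetry and the equivalence $F(r_1,r_2)=0\iff r_1=r_2$ are clear from $F=dM/\sqrt{d^2+M^2}$. The comparison with $\dIso$ is also quick: $d^{-2}+M^{-2}>\max\{d^{-2},M^{-2}\}=(\min\{d,M\})^{-2}$ gives $V<\min\{d,M\}$, which by~\eqref{eq:diso.mult.id} is exactly $\dIso(r_1\id_{\HS_1},r_2\id_{\HS_2})$, with strict inequality whenever $r_1\ne r_2$. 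The genuine obstacle is the triangle inequality $F(r_1,r_3)\le F(r_1,r_2)+F(r_2,r_3)$: in contrast to the general situation, where $\dQUE$ only satisfies a $\sqrt 3$-relaxed triangle inequality (\Lem{dque.metric}), here the sharp inequality for the explicit formula must be shown. I would use homogeneity to normalise $\max\{\abs{r_1},\abs{r_2},\abs{r_3}\}=1$ and then run a finite case analysis according to which modulus is largest in each pair, reducing each case to an elementary algebraic inequality; this bookkeeping, rather than any conceptual difficulty, is the hard part.
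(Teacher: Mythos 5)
Your computation of $\dQUE(r_1\id_{\HS_1},r_2\id_{\HS_2})$ is correct and follows essentially the paper's route (make $\delta_J$ explicit via spectral calculus, then optimise over $J$). Your lower bound is in fact cleaner and more rigorous than the paper's: the paper parametrises $\cos\theta\in\spec{(J^*J)^{1/2}}$ and argues somewhat loosely about which $\theta$ realise the maximum, whereas your observation that $\min\spec{T}\le\max\spec T=\normsqr J$ for $T\in\{J^*J,JJ^*\}$ pins down $1-\delta^2/M^2\le\delta^2/d^2$ directly. You are also right to flag that attaining the infimum requires $J=tU$ with $U$ unitary, hence $\dim\HS_1=\dim\HS_2$; the paper's proof shares this implicit assumption (its optimal $J$ has $\spec{J^*J}=\spec{JJ^*}=\{\cos^2\theta_0\}$ with $\cos\theta_0>0$, forcing $J$ invertible), so this is not a defect relative to the paper. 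The homogeneity claim and the strict comparison with $\dIso$ via $(d^{-2}+M^{-2})^{-1/2}<\min\{d,M\}$ are both fine.

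The genuine gap is the triangle inequality, which is part of the assertion (``defines a metric'') and which you only announce as ``a finite case analysis'' without executing it. That case analysis is not routine bookkeeping: the quantity $M_{ij}=\max\{\abs{r_i},\abs{r_j}\}$ and the quantity $d_{ij}=\abs{r_i-r_j}$ interact, and normalising $\max\{\abs{r_1},\abs{r_2},\abs{r_3}\}=1$ still leaves you with a two-parameter family of genuinely nonlinear inequalities in each case. The paper's argument avoids this entirely and is worth adopting: write the right-hand side as $\Xi\bigl(\abs{r_1-r_2},d_0(r_1,r_2)\bigr)$ with $\Xi(a,b)=ab/(a^2+b^2)^{1/2}$ and $d_0(r_1,r_2)=\max\{\abs{r_1},\abs{r_2}\}$. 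Both $\abs{r_1-r_2}$ and $d_0$ separately satisfy the triangle inequality (for $d_0$: if $\abs{r_1}\ge\abs{r_3}$ then $d_0(r_1,r_3)=\abs{r_1}\le d_0(r_1,r_2)\le d_0(r_1,r_2)+d_0(r_2,r_3)$). The function $\Xi$ is non-decreasing in each variable and subadditive, the latter because it is concave, positively homogeneous of degree one and vanishes at the origin. Chaining these three facts gives $\Xi(d_{13},M_{13})\le\Xi(d_{12}+d_{23},M_{12}+M_{23})\le\Xi(d_{12},M_{12})+\Xi(d_{23},M_{23})$, which is the triangle inequality. Until you either carry out your case analysis in full or substitute an argument of this kind, the metric claim remains unproved.
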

\begin{proof}
  We have
  \begin{align*}
    \delta_J(r_1\id_{\HS_1},r_2\id_{\HS_2})
    =\max \Bigl\{
      \abs{r_1}\norm{(\id-J^*J)}^{1/2},
    \abs{r_2}\norm{(\id-JJ^*)}^{1/2},
    \abs{r_1-r_2}\norm J
    \Bigr\}.
  \end{align*}
  Moreover,
  $\norm J = \norm{J^*J}^{1/2}=\norm{JJ^*}^{1/2}=\cos \theta_1$ for
  some $\theta_1 \in [0,\pi/2]$.  As
  $\spec{J^*J} \setminus\{0\}=\spec{JJ^*}\setminus \{0\} \subseteq
  (0,1]$ (see e.g.~\cite[Prp.~1.2]{post:09c}), we also know that
  \begin{align*}
    \norm{(\id-J^*J)}^{1/2}
    = \sin \theta_2
    \quadtext{and}
    \norm{(\id-J^*J)}^{1/2}
    = \sin \theta_2'
  \end{align*}
  for some $\theta_2,\theta_2' \in [0,\pi/2]$ and
  $\theta_1 \le \theta_2,\theta_2'$.  Moreover, if both kernels,
  $\ker (J^*J)=\ker J$ and $\ker (JJ^*) =\ker J^*$ are trivial or if
  both are non-trivial, then $\theta_2=\theta_2'$, otherwise,
  $\theta_2=\pi/2 \ge \theta_2'$ (if $\ker J$ is non-trivial and
  $\ker J^*=\{0\}$) and vice versa.  In particular, we have to minimise
  \begin{align}
    \label{eq:dque.mult.id'}
    \delta_J(r_1\id_{\HS_1},r_2\id_{\HS_2})
    =\max \Bigl\{
      \abs{r_1} \sin \theta_2, \abs{r_2} \sin \theta_2',
    \abs{r_1-r_2}\cos \theta_1
    \Bigr\}
  \end{align}
  over $\theta_1,\theta_2,\theta_2' \in [0,\pi/2]$ such that
  $\theta_1 \le \theta_2,\theta_2'$.  It is not hard to see that this
  quantity is minimised for $\theta_0:=\theta_1=\theta_2=\theta_2'$
  and when $r_0 \sin \theta_0=\abs{r_1-r_2} \cos \theta_0$, where
  $r_0:=\max\{\abs{r_1},\abs{r_2}\}$.  We may assume that $r_0>0$ as
  $r_0=0$ implies $r_1=r_2=0$.  It follows that
  $\theta_0 = \arctan (\abs{r_1-r_2}/r_0)$ and that the value
  of~\eqref{eq:dque.mult.id'} is
  \begin{align}
    \label{eq:dque.mult.id''}
    \abs{r_1-r_2}\cos \theta_0
    = \frac {\abs{r_1-r_2}}{(1+(\tan \theta_0)^2)^{1/2}}
    = \frac {\abs{r_1-r_2}}{(1+\frac{\abssqr{r_1-r_2}}{r_0^2})^{1/2}}
    = \Bigl(\frac1{\abssqr{r_1-r_2}}+\frac1{r_0^2}\Bigr)^{-1/2}.
  \end{align}
  using the convention $(1/0+1/r_0^2)^{-1/2}=0$.  If
  $\spec{(J^*J)^{1/2}}$ consists of a single value $\cos \theta_0$
  only, then $\dQUE(r_1 \id_{\HS_1},r_2 \id_{\HS_2})$ is achieved by
  $\delta_J(r_1\id_{\HS_1},r_2\id_{\HS_2})$ with value as
  in~\eqref{eq:dque.mult.id''}.  The homogeneity of $\dQUE$ is clear.
\end{proof}

\subsection{One operator is zero}
Here, we analyse the various distances for $R_1=R$ and $R_2=0$.  In
this case we have
\begin{equation}
  \label{eq:duni.zero}
  \dUni(R,0)
  =\norm R
\end{equation}
as $\norm{U_{12} R U_{12}^*}=\norm R$ for
$U_{12} \in \Unitary{\HS_1,\HS_2}$.
Similarly, we have
\begin{equation}
  \label{eq:diso.zero}
  \dIso(R,0)
  =\norm R
\end{equation}
as $\norm{\iota_1 R \iota_1^*}=\norm R$ for isometries
$\map{\iota_1}{\HS_1}\HS$.
\begin{proposition}
  \label{prp:dque.zero}
  We have
  \begin{equation}
    \label{eq:dque.zero}
    \dQUE(R,0)
    = \frac {\norm R}{\sqrt 2}.
  \end{equation}
\end{proposition}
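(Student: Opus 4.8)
The plan is to prove the two inequalities $\dQUE(R,0)\ge \norm R/\sqrt 2$ and $\dQUE(R,0)\le \norm R/\sqrt 2$ separately. This is consistent with reading the statement off \Prp{dque.mult.id}: putting $r_1=\norm R$ and $r_2=0$ there gives $\bigl(1/\normsqr R+1/\normsqr R\bigr)^{-1/2}=\norm R/\sqrt 2$, and indeed the scalar optimisation underlying \Prp{dque.mult.id} is exactly what controls the general case once one exploits the contraction constraint on $J$.

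For the lower bound, let $J\in\Lin{\HS_1,\HS_2}$ be an arbitrary contraction. Because $\norm J\le 1$, the operator $\id-J^*J$ is non-negative, hence so is $R^*(\id-J^*J)R$. For every unit vector $u\in\HS_1$ one has $\iprod{R^*(\id-J^*J)Ru}{u}=\normsqr{Ru}-\normsqr{JRu}$, and therefore
\[
  \normsqr{Ru}
  =\iprod{R^*(\id-J^*J)Ru}{u}+\normsqr{JRu}
  \le \norm{R^*(\id-J^*J)R}+\normsqr{JR}.
\]
Taking the supremum over all unit vectors $u$ yields $\normsqr R\le\norm{R^*(\id-J^*J)R}+\normsqr{JR}$. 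By the definition~\eqref{eq:delta_J} of $\delta_J(R,0)$ (with $R_2=0$), both terms on the right are at most $\delta_J(R,0)^2$, so $\normsqr R\le 2\,\delta_J(R,0)^2$, i.e.\ $\delta_J(R,0)\ge\norm R/\sqrt 2$. Since $J$ was arbitrary, $\dQUE(R,0)\ge\norm R/\sqrt 2$. Note that only the first and third terms of $\delta_J$ enter here.

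For the upper bound it suffices to exhibit one contraction attaining the value. Choosing $\HS_2=\HS_1$ and $J=\tfrac1{\sqrt 2}\id_{\HS_1}$ (or, when $\dim\HS_2\ge\dim\HS_1$, $J=\tfrac1{\sqrt 2}V$ for any isometry $\map V{\HS_1}{\HS_2}$) gives $J^*J=\tfrac12\id_{\HS_1}$, so that $R^*(\id-J^*J)R=\tfrac12 R^*R$ and hence $\norm{R^*(\id-J^*J)R}^{1/2}=\norm R/\sqrt 2$. Likewise $\norm{JR}=\norm R/\sqrt 2$ and $\norm{JR^*}=\norm{R^*}/\sqrt 2=\norm R/\sqrt 2$. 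Thus $\delta_J(R,0)=\norm R/\sqrt 2$, giving $\dQUE(R,0)\le\norm R/\sqrt 2$ and the claimed equality.

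The substantive step is the lower bound, whose key idea is to use the positivity of $\id-J^*J$ to split $\normsqr{Ru}$ into the two pieces that $\delta_J$ separately controls; the upper bound is a one-line computation. The only point requiring care is that the optimal $J$ needs an isometric embedding $\HS_1\hookrightarrow\HS_2$, so the clean identity presupposes $\dim\HS_2\ge\dim\HS_1$ (as is the case in our applications); without this one should interpret the upper bound through a limiting argument over contractions.
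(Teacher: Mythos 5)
Your proof is correct, and its core is the same as the paper's: both bounds rest on the inequality $\normsqr{R}\le\norm{R^*(\id-J^*J)R}+\normsqr{JR}$ together with the choice $J^*J=\tfrac12\id_{\HS_1}$ for the upper bound. The difference is in how the lower bound is derived: the paper first invokes Halmos' two-projections operators $C=(J^*J)^{1/2}$ and $S$ with $C^2+S^2=\id$, rewrites the two relevant terms of $\delta_J(R,0)$ as $\norm{SR}$ and $\norm{CR}$, and then applies the reverse triangle inequality $\norm{R^*(\id-C^2)R}\ge\normsqr{R}-\norm{R^*C^2R}$; you obtain the same inequality by the direct vector identity $\normsqr{Ru}=\iprod{(\id-J^*J)Ru}{Ru}+\normsqr{JRu}$ and positivity of $\id-J^*J$, which is more elementary and dispenses with the $C,S$ machinery entirely. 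Two further points in your favour: you explicitly check the fourth term $\norm{JR^*}=\norm{R}/\sqrt2$ of $\delta_J(R,0)$, which the paper silently drops from its formula for $\delta_J(R,0)$, and your closing caveat is substantive --- the optimal $J=\tfrac1{\sqrt2}V$ requires an isometry $\map{V}{\HS_1}{\HS_2}$, and if $\dim\HS_2<\dim\HS_1$ the stated equality can genuinely fail (e.g.\ $R=\id_{\C^2}$ and $0$ on $\C$ give $\dQUE=\norm R$, since $\id-J^*J$ then always has $1$ in its spectrum), a hypothesis the paper's proof uses implicitly without comment.
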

\begin{proof}
  In this case, we have
  \begin{align*}
    \delta_J(R,0)
    = \max\{\norm{R^*(\id-J^*J)R}^{1/2},\norm{JR}\}
    = \max\{\norm{SR},\norm{CR}\}
  \end{align*}
  using again the operators $C=(J^*J)^{1/2}$ and $S$ with
  $0\le C,S \le \id$ and $C^2+S^2=\id$ from Halmos' two projection
  theorem~\cite{halmos:69} as in the proof of \Prp{diso.mult.id}: note
  that we have
  \begin{align*}
    \norm{SR}
    &=\norm{R^*S^2R}^{1/2}
    =\norm{R^*(\id-C^2)R}^{1/2}
    =\norm{R^*(\id-J^*J)R}^{1/2}
      \quad\text{and}\\
    \norm{CR}
    &=\norm{R^*C^2R}^{1/2}
    =\norm{R^*J^*JR}^{1/2}
    =\norm{(JR)^*(JR)}^{1/2}
    =\norm{JR}.
  \end{align*}
  Now we have
  \begin{align*}
    \normsqr{SR}
    =\norm{R^*S^2R}
    =\norm{R^*(\id-C^2)R}
    \ge \norm{R^*R}-\norm{R^*C^2R}
    = \normsqr R - \normsqr{CR}
  \end{align*}
  using the reverse triangle inequality.  In particular,
  \begin{align*}
    \delta_J(R,0)
    \ge \max\{\sqrt{\normsqr R - \normsqr{CR}},\norm{CR}\}.
  \end{align*}
  The latter maximum actually becomes minimal for those $C$ for which
  both terms are equal, i.e.\ for which $\norm{CR}=\norm R/\sqrt 2$,
  hence $\delta_J(R,0) \ge \norm R/\sqrt 2$.  Choosing
  $C=S=(1/\sqrt 2)\id$ gives $\dQUE(R,0) \le \norm R/\sqrt 2$ and the
  result is proven.
\end{proof}

\subsection{Counterexamples to various statements}
\label{ssec:counterex}

In this subsection we collect several conterexamples and estimates
showing that our results are sharp.

\subsubsection*{The distances when operators act in the same Hilbert space}
We make now some statements about the distances $\dUni$, $\dIso$ and
$\dQUE$ (denoted by $d_\bullet$ in the sequel) in the case when
$\HS_1=\HS_2 (:=\HS)$, i.e., when $R_1$ and $R_2$ act in the same
Hilbert space.  In this case, we can directly calculate
$\norm[\Lin \HS]{R_1-R_2}$, but this number may be different from
$d_\bullet(R_1,R_2)$ in all cases:
\begin{example}
  \label{ex:dist.norm.diff}
  Assume that $\HS_1=\HS_2=\HS$, then
  $d_\bullet(R_1,R_2)<\norm{R_1-R_2}$ may happen in all cases:

  Assume that $\HS=\HSaux \oplus \HSaux$ for some Hilbert space
  $\HSaux$.  Let $R_1$ and $R_2$ be given by
  \begin{align*}
    R_1 =
    \begin{pmatrix}
      1 & 0\\
      0 & 2
    \end{pmatrix}
   \quadtext{and}
    R_2 = \frac12
    \begin{pmatrix}
      3 & 1\\
      1 & 3
    \end{pmatrix}
  \end{align*}
  in matrix representation with respect to the orthogonal
  decomposition.  These operators are unitarily equivalent, hence
  \begin{align*}
    0 \le
    \dQUE(R_1,R_2)
    \le \dIso(R_1,R_2)
    \le \dUni(R_1,R_2)=0.
  \end{align*}
  (second inequality by \Thm{main-3}, third is the trivial one of
  \Thm{main-2}).  Moreover, both operators have the same spectrum
  $\{1,2\}$ (with multiplicity $\dim \HSaux$).  For the operator norm
  difference, we have
  \begin{align*}
    \norm{R_1-R_2}=
    \Bignorm{
    \begin{pmatrix}
      -1/2 & -1/2\\
      -1/2 & 1/2
    \end{pmatrix}
    }=\frac1{\sqrt 2}.
  \end{align*}
\end{example}
The latter example is rather trivial, as $R_1$ and $R_2$ are not
simultaneously diagonalised. But even if both operators are
diagonalised we may have strict inequality at least for $\dIso$ and
$\dQUE$:
\begin{example}[all distances may be strict]
  \label{ex:dist.norm.diff'}
  Let $R_n=r_n \id_\HS$ with $r_1,r_2 \in \C$ for $n=1,2$ and
  $r_1 \ne r_2$.  From~\eqref{eq:duni.mult.id},
  \Prps{diso.mult.id}{dque.mult.id} we conclude
  \begin{align*}
    \dQUE(R_1,R_2)
    &= \Bigl(\frac1{\abssqr{r_1-r_2}}
    + \frac 1{\max\{\abssqr {r_1},\abssqr {r_2}\}}\Bigr)^{-1/2}\\
    < \dIso(R_1,R_2)
    &= \min\{\abs{r_1-r_2},\max\{\abs{r_1},\abs{r_2}\}\\
    \le \dUni(R_1,R_2)
    &= \abs{r_1-r_2}=\norm[\Lin \HS]{R_1-R_2}.
  \end{align*}
  For the first strict inequality, note that
  $(a^{-2}+b^{-2})^{-1/2}< \min\{a,b\}$ if $a,b>0$ and $a \ne b$, and
  the latter inequality is strict provided
  $\max\{\abs{r_1},\abs{r_2}\} < \abs{r_1-r_2}$, this may happen if
  $\arg r_1=\pi+\arg r_2$ modulo $2\pi$, e.g.\ when $r_1, r_2 \in \R$
  and $r_1 \cdot r_2 < 0$.  In particular, we may have
  \begin{align*}
    \dQUE(R_1,R_2)
    < \dIso(R_1,R_2)
    < \dUni(R_1,R_2).
  \end{align*}
\end{example}

\subsubsection*{The condition on the essential spectrum is needed}
We now comment on the condition~\eqref{eq:cond.ess.spec} in
\Thm{main-2} and \Cor{main-4}, namely why
$0 \in \essspec{R_1} \cap \essspec{R_1}$ is needed.  If $0$ is not in
one (or both) of the essential spectra, then various conclusions about
$\dUni$ and $\dIso$ may fail:
\begin{example}[zero in one essential spectrum needed]
  \label{ex:duni.eq.diso.counterex}
  We now show that
  \begin{align*}
    \dIso(R_1,R_2) < \dUni(R_1,R_2)
    \quadtext{may happen if}
    0 \notin \essspec{R_1}
    \quadtext{but}
    0 \in \essspec{R_2}:
  \end{align*}
  Let $\Sigma_1=[1,2]$ and $\Sigma_2=[-2,1]$ with its Lebesgue
  measure.  Moreover let $\HS_n=\Lsqr{\Sigma_n}$ and let $R_n$ be the
  multiplication operator defined by
  $(R_nf_n)(\lambda)=\lambda f_n(\lambda)$.  We have
  $\spec {R_1}=\Sigma_1$ and $\spec {R_2}=\Sigma_2$.  The Hausdorff
  distance of the spectra is $\dHaus(\Sigma_1,\Sigma_2)=3$. Moreover
  from~\eqref{eq:dhaus.duni'} we conclude
  $\dHaus(\Sigma_1,\Sigma_2)=\dUni(R_1,R_2)$.

  In order to estimate $\dIso(R_1,R_2)$ from above, we choose
  $\HS=\Lsqr{[-2,2]}$ and we let $\iota_n f_n$ be the extension of
  $f_n$ by $0$ onto $[-2,2]$.  Here,
  $\iota_1(\HS_1)\cap \iota_2(\HS_2)=\{0\}$, so that
  \begin{align*}
    \normsqr[\HS]{D_{\HS,\iota_1,\iota_2}(R_1,R_2) f}
    = \int_{\Sigma_1 \cup \Sigma_2}
    \abssqr{\lambda} \abssqr{f(\lambda)} \dd \lambda
    \le 4 \normsqr[{\Lsqr{[-2,2]}}] f.
  \end{align*}
  for all $f \in \Lsqr{[-2,2]}$.  In particular, we have
  \begin{align*}
    \dIso(R_1,R_2)
    \le \norm[\Lin \HS]{D_{\HS,\iota_1,\iota_2}(R_1,R_2)}
    = 2
    < 3
    = \dUni(R_1,R_2).
  \end{align*}
\end{example}

\begin{example}[zero in both essential spectra needed]
  \label{ex:duni.eq.diso.counterex'}
  We may also use \Prp{diso.mult.id} to show that
  $\dIso(R_1,R_2)<\dUni(R_1,R_2)$ and $0$ is neither in
  $\essspec{R_1}$ nor in $\essspec{R_2}$.  Furthermore,
  $0<\dUni(R_1,R_2)-\dIso(R_1,R_2)$ may be arbitrarily large:

  Let $R_n=r_n \id_{\HS_n}$ for $r_n \in \R$.  If $r_1>0$ and
  $-r_1 < r_2 < 0$, then we conclude from \Prp{diso.mult.id} that
  \begin{align*}
    \dUni(R_1,R_2)
    =\abs{r_1-r_2}
    =r_1-r_2
    > \dIso(R_1,R_2)
    =\max\{\abs{r_1},\abs{r_2}\}
    = \abs{r_1}=r_1
  \end{align*}
  In particular, we have
  \begin{align*}
    \dUni(R_1,R_2)-\dIso(R_1,R_2)
    = (r_1-r_2) - r_1=-r_2=\abs{r_2}>0,
  \end{align*}
  so choosing $r_1=-r_2+1$ we have $-r_1<r_2$ and we may choose $-r_2$
  arbitrarily large.
\end{example}

\begin{example}[isospectral distance zero, unitary not]
  \label{ex:diso.0.duni.not}
  We give a counterexample
  that~\itemref{main-4.a}$\implies$\itemref{main-4.c}
  or~\itemref{main-4.b}$\implies$\itemref{main-4.c} from \Cor{main-4}
  may fail if $0$ is not in both essential spectra, namely that it may
  happen that
  \begin{align*}
    \dIso(R_1,R_2)=0
    \quadtext{while}
    \dUni(R_1,R_2)>0:
  \end{align*}
  Let $R_n$ be operators with purely essential spectrum
  $\Sigma_1=[1,2]$ resp.\ $\Sigma_2=\{0\}\cup [1,2]$ for $n=1,2$.
  From~\eqref{eq:dhaus.duni'} we conclude
  $\dUni(R_1,R_2)=\dHaus(\Sigma_1,\Sigma_2)$, and the latter distance
  is $1$.  Assume further that $[1,2]$ is of the same spectral type
  for $R_1$ and $R_2$, say, absolutely continuous, so that $R_1$ and
  $R_2 \restr{(\ker R_2)^\perp}$ are unitarily equivalent, realised by
  a unitary map $\map U {\HS_1}{(\ker R_2)^\perp}$.  Choose $\HS_2$ as
  parent space with $\map {\iota_2=\id_{\HS_2}} {\HS_2}{\HS_2}$ and
  define an isometry
  $\map{\iota_1}{\HS_1}{\HS_2=\ker R_2 \oplus (\ker R_2)^\perp}$ by
  $\iota_1 f_1 = 0 \oplus U f_1$.  We now have
  \begin{align*}
    \dIso(R_1,R_2) \le \dUni(\iota_1 R_1 \iota_1^*,\iota_2 R_2\iota_2^*).
  \end{align*}
  As $\iota_1$ is not surjective and $\ker R_2$ is infinite
  dimensional, we have
  $\spec{\iota_1 R_1 \iota_1^*}=\{0\} \cup [1,2]=\Sigma_2$ by
  \Lem{spectra.of.isometries}, i.e., $\iota_1 R_1 \iota_1^*$, and
  $R_2$ have the same (purely) essential spectrum.  In particular, we
  conclude $\dUni(\iota_1 R_1 \iota_1^*,R_2)=0$ again
  by~\eqref{eq:dhaus.duni'}.
\end{example}

\subsubsection*{Isometric and quasi-unitary distance are not equal}
We continue with the slightly unsatisfactory constant $\sqrt 3$ in
\Thm{main-3}:
\begin{corollary}
  \label{cor:at.least.sqrt2}
  The constant $\sqrt 3$ in
  $\dIso(R_1,R_2) \le \sqrt 3 \dQUE(R_1,R_2)$ from \Thm{main-3} has to
  be at least $\sqrt 2$.
\end{corollary}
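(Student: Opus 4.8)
The plan is to prove the lower bound by exhibiting a single concrete pair of operators for which the ratio $\dIso/\dQUE$ already attains $\sqrt 2$; once such a pair exists, no constant smaller than $\sqrt 2$ can make the inequality $\dIso \le C\,\dQUE$ hold universally. The natural candidate is the degenerate pair $(R,0)$ studied in the previous subsection, because both distances have just been computed there in closed form.

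Concretely, I would fix any nonzero bounded operator $R \in \Lin{\HS_1}$ and set $R_1 := R$ and $R_2 := 0 \in \Lin{\HS_2}$. By~\eqref{eq:diso.zero} we have $\dIso(R,0) = \norm{R}$, while \Prp{dque.zero} yields $\dQUE(R,0) = \norm{R}/\sqrt 2$. Combining these two identities gives
\begin{equation*}
  \dIso(R,0) = \sqrt 2\,\dQUE(R,0).
\end{equation*}
Hence if $C$ is any constant for which $\dIso(R_1,R_2) \le C\,\dQUE(R_1,R_2)$ holds for all pairs of bounded operators, then evaluating this at the pair $(R,0)$ and dividing by $\dQUE(R,0) = \norm{R}/\sqrt 2 > 0$ forces $C \ge \sqrt 2$. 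In particular the constant $\sqrt 3$ of \Thm{main-3} satisfies $\sqrt 3 \ge \sqrt 2$, which is precisely the claimed lower bound.

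There is essentially no obstacle here: all of the work is already contained in~\eqref{eq:diso.zero} and \Prp{dque.zero}, and the only point to verify is that $\dQUE(R,0)$ is strictly positive for nonzero $R$, which is immediate from its closed-form value $\norm{R}/\sqrt 2$. The substance of the corollary is therefore that the $\sqrt 2$ appearing in the computation of $\dQUE(R,0)$ cannot be improved away, so that $\dIso$ and $\dQUE$ are genuinely inequivalent as exact equalities and the factor in \Thm{main-3} must exceed $1$ by at least this amount.
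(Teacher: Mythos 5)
Your proposal is correct and follows exactly the paper's own argument: the paper likewise deduces the bound from~\eqref{eq:diso.zero} and~\eqref{eq:dque.zero}, i.e.\ from the identity $\dIso(R,0)=\sqrt 2\,\dQUE(R,0)$ for the pair $(R,0)$. You merely spell out the (obvious) final step that a universal constant $C$ with $\dIso\le C\,\dQUE$ must then satisfy $C\ge\sqrt 2$.
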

\begin{proof}
  The statement follows from~\eqref{eq:diso.zero}
  and~\eqref{eq:dque.zero} as $\dIso(R,0)=\sqrt 2 \dQUE(R,0)$.
\end{proof}

We finally comment on~\eqref{eq:dhaus.duni} for $\dIso$ and $\dQUE$
instead of $\dUni$:
\begin{example}[isometric, quasi-unitary and spectral distance]
  \label{ex:diso.dhaus}
  Let $\HS_1:= \C^2$ and $\HS_2:=\C^3$ and consider the operators
  given by their matrix representations
  \begin{equation*}
    R_1=
    \begin{pmatrix}
      \lambda_1 & 0 \\
      0 & \lambda_2\\
    \end{pmatrix}
    \qquadtext{and}
    R_2=
    \begin{pmatrix}
      \lambda_1 & 0 & 0 \\
      0 & \lambda_2 & 0 \\
      0 & 0 & \lambda_3 \\
    \end{pmatrix}.
  \end{equation*}
  for $\lambda_1 > \lambda_2 > 0 > \lambda_3$.  Note that
  $\dUni(R_1,R_2)=\infty$ as there is no unitary map between $\C^2$
  and $\C^3$.  Similarly, for the \LevyProkhorov distance, we have
  $\dCmf(R_1,R_2)=\delta(\alpha_{R_1},\alpha_{R_2})=\infty$ as for
  $I=(\lambda_1-1,\lambda_3+1)$, there is no $\eps>0$ such that
  $(3=)\alpha_{R_2}^*(I)\le \alpha_{R_1}^*(\closedBall_\eps(I))(=2)$.
  Moreover, as
  $\dSpec(R_1,R_2)=\delta(\alpha_{R_1},\alpha_{R_2})=\infty$,
  \eqref{eq:dspec.duni} still holds.

  Nevertheless, the inequality~\eqref{eq:dhaus.duni} may become strict
  for $\dIso$ and $\dQUE$: For the Hausdorff distance, we compute
  $\dHaus(\spec{R_1},\spec{R_2}) =
  \lambda_1-\lambda_3=\lambda_1+\abs{\lambda_3}>\abs{\lambda_3}$. By
  choosing $\HS=\C^3$, $\iota_1 (x_1,x_2)=(x_1,x_2,0)$ and
  $\iota_2= \id_{\HS_2}$ we have
  \begin{equation*}
    \dIso(R_1,R_2)
    \le \norm{\iota_1R_1\iota_1^*-R_2}
    =\Bignorm{
      \begin{pmatrix}
        0 & 0 & 0 \\
        0 & 0 & 0 \\
        0 & 0 & -\lambda_3
      \end{pmatrix}}
    =\abs{\lambda_3}
    <\dHaus(\spec{R_1},\spec{R_2}).
  \end{equation*}
  In particular,~\eqref{eq:dhaus.duni} can be strict if one replaces
  $\dUni$ by $\dIso$ (and not only because of different
  multiplicities).  Again, $0$ is not in the essential spectra of
  $R_n$ as both Hilbert spaces are finite dimensional.  More
  important, $R_2$ has also negative spectrum.  As
  $\dQUE(R_1,R_2)\le \dIso(R_1,R_2)$ by \Thm{main-3}, we also have
  \begin{equation*}
    \dQUE(R_1,R_2)
    \le \abs{\lambda_3}
    <\lambda_1+\abs{\lambda_3}
    =\dHaus(\spec{R_1},\spec{R_2}).
  \end{equation*}
\end{example}

\section{Related convergence notions}
\label{sec:conv}

As mentioned in the introduction, the idea of investigating distance
notions on varying spaces arised from very similar convergence
notions.

\subsection{Unbounded operators}
Originally, the concept of quasi-unitary equivalence and related
convergence notions use \emph{resolvents} of unbounded operators.  We
briefly discuss the concepts defined already in~\cite{post-zimmer:22}
as well as some new concepts of convergence notions applied to
unbounded operators.

Here and in the sequel we assume that $\D_n$ is a closed (possibly
unbounded) operator acting in a Hilbert space $\HS_n$ for
$n \in \Nbar=\N \cup \{\infty\}$.  Moreover, we assume that there
exists a common point $z_0$ in the resolvent set, i.e.,
$z_0 \in \bigcap_{n \in \Nbar} \rho(\D_n)$, where
$\rho(\D_n)=\C \setminus \spec{\D_n}$ denotes the resolvent set of
$\D_n$, so we consider the bounded operators $R_n=(\D_n-z_0)^{-1}$.

We start with a concept used by Weidmann (and implicitly also in many
other publications), called here briefly the \emph{Weidmann
  convergence}:
\begin{definition}[Weidmann convergence]
  \label{def:gnrc-weid}
  Let $\D_n$ be a self-adjoint bounded or unbounded operator in a
  Hilbert space $\HS_n$ for
  $n \in \Nbar$.  We say
  that the sequence $(\D_n)_{n \in \N}$ \emph{converges to}
  $\D_\infty$ \emph{in generalised norm resolvent sense of Weidmann}
  (or shortly \emph{Weidmann-converges}), if the following conditions
  are true:
  \begin{enumerate}
  \item There exist a Hilbert space $\HSgen$, called \emph{parent
      (Hilbert) space} and for each $n \in \Nbar$ an isometry
    $\map {\iota_n}{\HS_n}{\HSgen}$.
  \item We have $\delta_n := \norm[\Lin{\HSgen}]{D_n} \to 0$ as
    $n \to \infty$, where
    \begin{equation}
      \label{eq:weidmanns_diff}
      D_n := \iota_n R_n \iota_n^* - \iota_\infty R_\infty \iota_\infty^*.
    \end{equation}
  \end{enumerate}
  For short, we write $ \D_n \gnrcW \D_\infty $ (with respect to
  $(\iota_n)_{n \in \Nbar}$) and call $(\delta_n)_n$ the
  \emph{convergence speed}.
\end{definition}
There is a subtle point in $\D_n \gnrcW \D_\infty$ and
$\bardIso(R_n,R_\infty)\to 0$: in the former, we have the \emph{same}
parent space $\HS$ for all $n \in \N$, while in the latter, the parent
space $\HS^{(n)}$ might depend on $n$.  In particular, the
implication~\itemref{main-5.b}$\implies$\itemref{main-5.a} in
\Thm{main-5} is not completely trivial.  We bypass this fact by
repeating the original proof
of~\itemref{main-5.c}$\implies$\itemref{main-5.a}
from~\cite[Thm.~3.1]{post-zimmer:22}, also for convenience of the
reader and because we use a slightly different version of
QUE-convergence here (see below).

Another concept was developed by the first author, named
``QUE-convergence'' here:
\begin{definition}[QUE-convergence]
  \label{def:gnrc-que}
  Let $\D_n$ be a self-adjoint bounded or unbounded operator in a
  Hilbert space $\HS_n$ for
  $n \in \Nbar:=\N \cup \{\infty\}=\{1,2,3,\dots,\infty\}$.  We say
  that the sequence $(\D_n)_{n \in \N}$ \emph{converges to}
  $\D_\infty$ \emph{in generalised norm resolvent sense} (or shortly
  \emph{QUE-converges}), if there exist
  $z_0 \in \bigcap_{n \in \Nbar} \rho(\D_n)$, a sequence of bounded
  operators $(\map{J_n}{\HS_n}{\HS_\infty})_n$ and a sequence
  $(\delta_n)_{n \in \N}$ with $\delta_n \to 0$ as $n \to \infty$ such
  that
  \begin{subequations}
    \label{eq:gnrc-que}
    \begin{align}
      \label{eq:que1}
      \norm[\Lin{\HS_n, \HS_\infty}] {J_n} \le
      &1 \\ 
      \label{eq:que2}
      \bignorm[\Lin {\HS_n}]{R_n(\id_{\HS_n}-J_n^*J_n)R_n}^{1/2}  \le \delta_n
      & \to 0,
        \quad
        \bignorm[\Lin {\HS_\infty}]
        {R_\infty(\id_{\HS_\infty}-J_nJ_n^*)R_\infty}^{1/2}
        \le \delta_n \to 0,\\
      \label{eq:que3}
      \norm[\Lin{\HS_n, \HS_\infty}]{R_\infty J_n-J_nR_n} \le \delta_n &\to 0
    \end{align}
  \end{subequations}
  We write for short $\D_n \gnrc \D_\infty$ (with respect to $J_n$).
  We call $(\delta_n)_n$ the \emph{convergence speed}.
\end{definition}
\begin{remarks}[on previous versions of quasi-unitary equivalence and convergence]
  \indent
  \begin{enumerate}
  \item \myparagraph{Resolvents on both sides.} %
    In previous publications, we used a slightly different version:
    namely, the resolvent appeared only once in~\eqref{eq:que2}, and
    there is no square root of the norms, the reason is already
    explained in \Rem{new-que}.

  \item \myparagraph{Contraction property in old version not needed.}%
    When comparing to the definition of QUE-distance the attentive
    reader might notice that $\norm{J_n}\le 1$ is not required in our
    previous version of QUE-convergence
    (see~\cite[Eq.~(1.6a)]{post-zimmer:22}).  We have shown
    in~\cite[Lemma 3.17]{post-zimmer:22} that one can turn a general
    $J_n$ (with $\norm{J_n}\le 1+\delta_n$) not being a contraction
    into a contraction $\hat J_n := (1/\norm {J_n}) J_n$, and (the
    old) QUE-convergence still holds with respect to $\hat J_n$ and
    the same convergence speed.  For the new version of quasi-unitary
    equivalence as in~\eqref{eq:que2}, one has a loss of convergence
    speed here if we replace~\eqref{eq:que1} with
    $\norm{J_n}\le 1+\delta_n$, namely QUE-convergence for $\hat J_n$
    holds only with convergence speed of order $\delta_n^{1/2}$,
    see~\cite[Lem.~4.3.8]{zimmer:24}.
  \end{enumerate}
\end{remarks}

\begin{proof}[Proof of \Thm{main-5}]
  \itemref{main-5.a}$\implies$\itemref{main-5.b} follows from the fact
  that if $ \D_n \gnrcW \D_\infty$ there exists a parent space $\HS$
  such that
  $\delta_n = \norm[\Lin{\HSgen}]{\iota_n R_n \iota_n^* - \iota_\infty
    R_\infty \iota_\infty^*} \to 0$. The definition via the infimum
  directly implies
  \begin{equation*}
    \bardIso(\D_n,\D_\infty)
    =\dIso(R_n,R_\infty)
    \le \norm[\Lin{\HSgen}]%
    {\iota_n R_n \iota_n^* - \iota_\infty R_\infty \iota_\infty^*} \to 0.
  \end{equation*}
  The equivalence~\itemref{main-5.c}$\iff$\itemref{main-5.d} follows
  as above by the definition of the infimum.
  For~\itemref{main-5.b}$\iff$\itemref{main-5.d} we use \Thm{main-3}.

  We now show~\itemref{main-5.c}$\implies$\itemref{main-5.a} as
  in~\cite{post-zimmer:22}, for convenience of the reader and as we
  use slightly different definitions here, we repeat briefly the
  argument: Let $\D_n \gnrc \D_\infty$ with respect to to a
  contraction sequence $(J_n)_n$ (cf.\
  \cite[Lemma~3.17]{post-zimmer:22} if $\norm{J_n}>1$). Then we
  proceed as in the proof of the second inequality of \Thm{main-3}:
  Similarly as in~\eqref{eq:constr-nagy}, we construct a parent space
  via the direct sum
 \begin{subequations}
    \label{eq:parent.hs}
    \begin{align}
      \label{eq:parent.hs.sum}
      \HSgen = \HS_\infty \oplus \bigoplus_{n=1}^\infty \HS_n,
      \qquad f=(f_\infty, f_1, f_2, \dots) \in \HSgen\\
      \label{eq:parent.hs.norm}
      \text{with}\quad
      \normsqr[\HSgen] f
      = \normsqr[\HS_\infty]{f_\infty}
      + \sum_{n=1}^\infty \normsqr[\HS_n]{f_n}
      < \infty
    \end{align}
  \end{subequations}
  and the isometries via
  \begin{subequations}
    \begin{equation}
      \label{eq:concrete.iso.n}
      \map {\iota_n}{\HS_n} \HSgen,
      \quad \iota_nf_n = (J_nf_n,0,\dots,0, W_n f_n,0,\dots)
    \end{equation}
    for $n \in \N$, where the entry
    $W_n f_n=(\id_{\HS_n}-J_n^*J_n)^{1/2}f_n$ is at the $n$-th component
    and
    \begin{equation}
      \label{eq:concrete.iso.infty}
      \map{\iota_\infty}{\HS_\infty}\HSgen,
      \quad \iota_\infty f_\infty=(f_\infty,0,\dots).
    \end{equation}
  \end{subequations}
  As in~\cite{post-zimmer:22} one easily sees that
  $J_n=\iota_\infty^*\iota_n$ for any $n \in \N$.
  Using~\eqref{eq:dn_in_3teile}
  and~\eqref{eq:que.iso.norm1}--\eqref{eq:que-mod'} we conclude
  \begin{multline*}
    \norm[\Lin{\HS}]{D_n}
    \leq\\
    \norm[\Lin{\HS_n}]{R_n(\id_n-J_n^*J_n)R_n}^{1/2}
    +\norm[\Lin{\HS_\infty}]{R_\infty(\id_\infty-J_nJ_n^*)R_\infty}^{1/2}
    +\norm[\Lin{\HS_n, \HS_\infty}]{J_nR_n-R_\infty J_n},
  \end{multline*}
  where right hand side tends to $0$ as $\D_n \gnrc \D_\infty$.  We
  therefore have shown the equivalence
  of~\itemref{main-5.a}--\itemref{main-5.d}.

  The equivalence~\itemref{main-5.e}$\iff$\itemref{main-5.f} follows
  again as above by the definition of the infimum.  The
  self-adjointness of $\D_n$ ensures that \Thm{main-1} holds, i.e., we
  have~\itemref{main-5.f}$\iff$\itemref{main-5.g}.

  \itemref{main-5.f}$\implies$\itemref{main-5.b} holds because we
  always have $\dUni(R_1,R_2) \ge \dIso(R_1,R_2)$.

  For~\itemref{main-5.b}$\implies$\itemref{main-5.f} we use
  \Thm{main-2}; note that we always have~$0 \in \essspec{R_n}$ for
  $n \in \Nbar$ as $\D_n$ is unbounded.  The
  equivalence~\itemref{main-5.f}$\iff$\itemref{main-5.g} follows from
  \Thm{main-1}.
\end{proof}

Finally, we can reformulate our results as follows:
\begin{corollary}
  \label{cor:conv}
  Let $(\D_n)_n$ be a sequence of self adjoint operators acting in
  $\HS_n$ each, $\D_\infty$ acting on $\HS_{\infty}$ and
  $\operatorname d_\circ \in \{\dIso, \dUni, \dQUE, \dCmf\}$. Then the
  following holds:
  \begin{enumerate}
  \item If $\operatorname d_\circ(R_n, R_\infty) \to 0$ for $n \in \N$, then
    $\D_n \gnrc \D_\infty$ and $ \D_n \gnrcW \D_\infty$.
  \item If $0 \in \bigcap\limits_{n \in \Nbar} \essspec{R_n}$
    (equivalently, if all operators $\D_n$ are unbounded), then the
    converse implication is also true.
  \end{enumerate}
\end{corollary}
Recall that $\dCmf(R_1,R_2)=\delta(\alpha_{R_1},\alpha_{R_2})$ is the
\LevyProkhorov distance of the crude multiplicity functions
associated with $R_1$ and $R_2$, see \Prp{char.cmf}, \Def{dist.cmf}
and~\eqref{eq:def.dcmf}.

Finally, we discuss the topology generated by any of our distances on
\begin{equation*}
  \setOp:= \bigcup_\HS \Lin \HS, \;\;
  \setOp_\R := \bigcup_\HS \Lin[\R] \HS, \;\;
  \setOp_0 := \bigcup_\HS \Lin[0] \HS
  \;\;\text{and}\;\;
  \setOp_{\R,0} := \setOp_\R \cap \setOp_0,
\end{equation*}
where $\Lin[\R]\HS$ and $\Lin[0]\HS$ denote the set of all
self-adjoint resp.\ operators with $0$ in the essential spectrum.
Here, the union is taken over the set of all separable (or finite
dimensional) Hilbert spaces.  We will not go into details on
set-theoretic problems of such a definition, as we mostly have
applications in mind, where the set is a family depending on a real
parameter or a natural number.  One could think of a \emph{bundle of
  Hilbert spaces} as discussed e.g.\
in~\cite[Sec.~2]{kuwae-shioya:03}.

From \Cor{conv} we conclude:
\begin{corollary}
  The topologies on $\setOp_{\R,0}$ generated by $\dUni$, $\dIso$,
  $\dQUE$ and $\dCmf$ are all the same.
\end{corollary}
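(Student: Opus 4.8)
The plan is to deduce the statement entirely from comparison inequalities already established, observing that on the restricted domain $\setOp_{\R,0}$ three of the four distances coincide as functions while the fourth is equivalent to them up to a multiplicative constant; the only nontrivial point is interpreting ``the topology generated by'' $\dQUE$, which is not a metric.

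First I would record that on $\setOp_{\R,0}$ the distances $\dUni$, $\dIso$ and $\dCmf$ agree pointwise. Indeed, \Thm{azoff-davis-duni} gives $\dUni(R_1,R_2)=\delta(\alpha_1,\alpha_2)=\dCmf(R_1,R_2)$ for all bounded self-adjoint $R_1,R_2$, and \Thm{main-2} gives $\dUni(R_1,R_2)=\dIso(R_1,R_2)$ whenever $0\in\essspec{R_1}\cap\essspec{R_2}$, which holds for every pair in $\setOp_{\R,0}$. Hence $\dUni=\dIso=\dCmf$ as functions on $\setOp_{\R,0}\times\setOp_{\R,0}$, so three of the four topologies are literally identical, with nothing to prove beyond citing these two results (or, equivalently, \Cor{main-1'} together with \Thm{azoff-davis-duni}).

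It then remains to compare $\dQUE$ with $\dIso$, and here I would invoke \Thm{main-3}, which yields
\[
  \dQUE(R_1,R_2)\le\dIso(R_1,R_2)\le\sqrt3\,\dQUE(R_1,R_2)
\]
for all operators, in particular on $\setOp_{\R,0}$. Writing $B^d_r(R):=\set{R'}{d(R,R')<r}$ for the open $d$-ball, these two inequalities translate into the interleaving
\[
  B^{\dIso}_r(R)\subseteq B^{\dQUE}_r(R)\subseteq B^{\dIso}_{\sqrt3\,r}(R)
\]
valid for every $R\in\setOp_{\R,0}$ and every $r>0$. From this one reads off at once that a subset is open for the $\dIso$-ball system if and only if it is open for the $\dQUE$-ball system, so the generated topologies coincide; combined with the previous paragraph, all four distances generate the same topology.

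The one point requiring care — and the only genuine obstacle — is that $\dQUE$ is \emph{not} a metric: by \Lem{dque.metric} it satisfies only a weakened triangle inequality. I would therefore fix at the outset what ``the topology generated by a distance'' means for such a function, declaring a set $U$ open precisely when every $R\in U$ admits some $r>0$ with $B^{\dQUE}_r(R)\subseteq U$. This prescription always defines a genuine topology, even without the triangle inequality, because stability under finite intersections only uses balls centred at the \emph{same} point (taking the minimum of the radii, and using monotonicity of $B^{\dQUE}_r(R)$ in $r$), so the defective triangle inequality never enters. With this convention the ball interleaving above gives equality of the $\dQUE$- and $\dIso$-topologies verbatim, and the corollary follows.
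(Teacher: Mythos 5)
Your proof is correct, and it rests on exactly the same ingredients the paper uses: the pointwise identities $\dUni=\dCmf$ (\Thm{azoff-davis-duni}) and $\dUni=\dIso$ on $\setOp_{\R,0}$ (\Thm{main-2}), plus the two-sided bound $\dQUE\le\dIso\le\sqrt3\,\dQUE$ (\Thm{main-3}). The only difference in route is that the paper derives the corollary from \Thm{conv}, i.e.\ from the equivalence of the induced notions of \emph{sequential} convergence, whereas you compare the distances directly and translate the inequalities into an interleaving of balls; your version is slightly more self-contained and, usefully, makes explicit the convention needed to speak of ``the topology generated by'' $\dQUE$, which by \Lem{dque.metric} satisfies only a weakened triangle inequality --- a point the paper leaves implicit. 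Both arguments are equally valid; yours buys a cleaner statement of what is being claimed (balls need not be open, but the ball-neighbourhood topology is still well defined and the interleaving argument goes through verbatim), while the paper's buys brevity by reusing the already-stated convergence theorem.
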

Note that the topology is not Hausdorff (by \Cor{main-4}).  Taking the
quotient metric space $\setOp_{\R,0}/{\sim}$ with $R_1 \sim R_2$ if
and only one of the conditions in \Cor{main-4} is fulfilled, then
$[R]$ is determined by $\essspec R$ and $\disspec R$ including
multiplicities.  Let $\Comp$ be the set of all crude multiplicity
functions with compact support, then we can naturally identify compact
subsets $\Sigma \subset \R$ with $\alpha=\infty \1_K$ and finite
multisets with another crude multiplicity function.  Using
\Prp{dist.cmf.haus}, we see that the \LevyProkhorov distance
$\delta(\alpha_1,\alpha_2)$ defined in \Def{dist.cmf} can be expressed
in terms of the Hausdorff distance of the compact (essential) part and
another distance taking care of the finite part.  In particular, the
quotient map
\begin{equation*}
  (\setOp_{\R,0}, \operatorname d_\circ) \to (\Comp,\delta), \quad
  R \mapsto \alpha_R,
\end{equation*}
mapping operators to their crude multiplicity functions is an isometry
for $\operatorname d_\circ \in \{\dIso, \dUni, \dCmf\}$, and
bi-lipschitz-continuous for $\operatorname d=\dQUE$.  Note that the
space $(\Comp,\delta)$ is a Hausdorff space.  We have hence
generalised~\cite[Lem.~2.5]{post-simmer:20} to general bounded
self-adjoint operators with $0$ in their essential spectrum.  We will
discuss related questions in a forthcoming publication.


%
%


\def\cprime{$'$}
\providecommand{\bysame}{\leavevmode\hbox to3em{\hrulefill}\thinspace}
\providecommand{\MR}{\relax\ifhmode\unskip\space\fi MR }
\providecommand{\MRhref}[2]{%
  \href{http://www.ams.org/mathscinet-getitem?mr=#1}{#2}
}
\providecommand{\href}[2]{#2}


\end{document}